\numberwithin{equation}{section}
\theoremstyle{plain}
\newtheorem{theorem}{Theorem}[section]
\newtheorem{conjecture}[theorem]{Conjecture}
\newtheorem{lemma}[theorem]{Lemma}
\newtheorem{proposition}[theorem]{Proposition}
\newtheorem{corollary}[theorem]{Corollary}
\newtheorem{proposition-definition}[theorem]{Proposition-Definition}
\theoremstyle{definition}
\newtheorem{definition}[theorem]{Definition}
\newtheorem{example}[theorem]{Example}
\newtheorem{remark}[theorem]{Remark}
\DeclareMathOperator{\Hom}{\mathrm{Hom}}
\DeclareMathOperator{\Cone}{\mathrm{Cone}}
\DeclareMathOperator{\id}{\mathrm{id}}
\DeclareMathOperator{\Pic}{\mathrm{Pic}}
\DeclareMathOperator{\MCG}{\mathrm{MCG}}
\DeclareMathOperator{\Aut}{\mathrm{Aut}}
\DeclareMathOperator{\Supp}{\mathrm{Supp}}
\DeclareMathOperator{\SL}{\mathrm{SL}}
\DeclareMathOperator{\Spec}{\mathrm{Spec}}
\DeclareMathOperator{\Perf}{\mathrm{Perf}}
\DeclareMathOperator{\Ext}{\mathrm{Ext}}
\DeclareMathOperator{\sm}{\mathrm{sm}}
\DeclareMathOperator{\QCoh}{\mathrm{QCoh}}
\DeclareMathOperator{\GL}{\mathrm{GL}}
\DeclareMathOperator{\Tot}{\mathrm{Tot}}
\DeclareMathOperator{\Bl}{\mathrm{Bl}}
\DeclareMathOperator{\Coh}{\mathrm{Coh}}
\DeclareMathOperator{\Vect}{\mathrm{Vect}}
\DeclareMathOperator{\op}{\mathrm{op}}
\DeclareMathOperator{\Stab}{\mathrm{Stab}}
\DeclareMathOperator{\Sym}{\mathrm{Sym}}
\newcommand{\cA}{{\mathcal{A}}}
\newcommand{\cB}{{\mathcal{B}}}
\newcommand{\cD}{{\mathcal{D}}}
\newcommand{\cF}{{\mathcal{F}}}
\newcommand{\cL}{{\mathcal{L}}}
\newcommand{\cO}{{\mathcal{O}}}
\newcommand{\cP}{{\mathcal{P}}}
\newcommand{\cQ}{{\mathcal{Q}}}
\newcommand{\cT}{{\mathcal{T}}}
\newcommand{\cZ}{{\mathcal{Z}}}
\newcommand{\bA}{{\mathbb{A}}}
\newcommand{\bC}{{\mathbb{C}}}
\newcommand{\bG}{{\mathbb{G}}}
\newcommand{\bP}{{\mathbb{P}}}
\newcommand{\bQ}{{\mathbb{Q}}}
\newcommand{\bR}{{\mathbb{R}}}
\newcommand{\bZ}{{\mathbb{Z}}}
\newcommand{\fX}{{\mathfrak{X}}}
\newcommand{\fiber}{X}
\newcommand{\totalSpace}{\fX}
\newcommand{\baseSpace}{\bA^1}
\newcommand{\component}{Z}
\newcommand{\cptComponent}{Z_0}
\newcommand{\projectionMap}{\pi}
\newcommand{\closedImmersion}{i}
\newcommand{\cptComponentImmersion}{\iota}
\newcommand{\sectionMap}{s}
\newcommand{\canonicalBundleProjection}{p}
\newcommand{\canonicalBundleBaseSpace}{\bP^2}
\newcommand{\fiberCohSupp}{\Coh_0}
\newcommand{\fiberD}{\cD}
\newcommand{\fiberDc}{\cD_c}
\newcommand{\fiberPerf}{\Perf(\fiber)}
\newcommand{\fiberDsupp}{\cD_0}
\newcommand{\totalDsupp}{D^b_{\cptComponent}(\totalSpace)}
\newcommand{\simpleTotalDsupp}{\widetilde{\cD_0}}
\newcommand{\dpFunction}{\delta^{DP}_{\infty}}
\newcommand{\geometricChamber}{U(\fiber)}
\newcommand{\stabDagger}{\Stab^\dagger(\fiberDsupp)}
\newcommand{\comparisonIso}{\theta}
\newcommand{\totalGeometricChamber}{U(\totalSpace)}
\newcommand{\nTotalGeometricChamber}{V(\totalSpace)}
\newcommand{\totalStabDagger}{\Stab^\dagger(\simpleTotalDsupp)}
\newcommand{\boundaryE}{\partial_E G}
\newcommand{\positiveTwistWall}{W^{+}_E}
\newcommand{\negativeTwistWall}{W^{-}_E}
\newcommand{\slopeOfE}{\mu_E}
\newcommand{\discriminantOfE}{\Delta_E}
\newcommand{\slopeOfF}{\mu_F}
\newcommand{\discriminantOfF}{\Delta_F}
\newcommand{\sphericalTwistE}{T_E}
\newcommand{\halfTwistE}{H_E}
\newcommand*{\rom}[1]{\expandafter\@slowromancap\romannumeral #1@}
\title[Autoequivalences and stability conditions on a degenerate K3 surface]{
    Autoequivalences and stability conditions on a degenerate K3 surface
}
\author[H.~Arai]{Hayato Arai}
\address{
Graduate School of Mathematical Sciences,
The University of Tokyo,
3-8-1 Komaba,
Meguro-ku,
Tokyo,
153-8914,
Japan.
}
\email{hayato@ms.u-tokyo.ac.jp}
\begin{document}
\begin{abstract}
    We study autoequivalences and stability conditions on the derived category of coherent sheaves on a singular surface $X$ which arises as an open subvariety of a type III Kulikov degeneration of K3 surfaces.
    The surface $X$ consists of four irreducible components, one of which is $\bP^2$, and the others are non-compact rational surfaces.
    Using a comparison with the total space of the degeneration, we show that the connected component $\Stab^\dagger(D^b_{\bP^2}(X))$ of the space of stability conditions on the supported derived category $D^b_{\bP^2}(X)$ containing geometric stability conditions is simply connected, and describe its wall-and-chamber structure via half-spherical twists.
    As consequences, we determine the subgroup of the autoequivalence group $\Aut(D^b(X))$ that preserves this component; it is isomorphic to $\mathbb{Z} \times \Gamma_1(3) \times \Aut(X)$, where $\Gamma_1(3) \subset \mathrm{SL}(2,\mathbb{Z})$ is the congruence subgroup of level~3.
\end{abstract}
\maketitle
\section{Introduction}
\subsection{Background and motivation}\label{subsec:background-and-motivation}
Bridgeland \cite{MR2373143} introduced the notion of stability conditions on triangulated categories as a mathematical formulation of Douglas's $\Pi$-stability \cite{MR1957548} in string theory, thereby opening the way to studying autoequivalences through them \cite{MR2376815, MR2388559, MR3592689}.
While they have been widely studied for derived categories of smooth projective varieties, in recent years there has been increasing attention to the case of singular varieties.
In \cite{MR2155085, MR2264663}, Burban and Kreu{\ss}ler gave a complete description of stability conditions and autoequivalence groups of irreducible genus-one curves (namely, type $I_1$ and $II$ Kodaira fibers).
Karube \cite{MR4830066} studied stability conditions on the other types of Kodaira fibers, whose structures turn out to resemble those of K3 surfaces \cite{MR2376815} more closely than to those of elliptic curves.
The autoequivalence groups of such curves have only been determined in the case of type~$I_n$ fibers \cite{opperJEMS}.
Irreducible singular curves are also treated in \cite{liu2025stabilityconditionsirreducibleprojective}.
More recently, \cite{MR4813221, chou2024stabilityconditionsingularsurface} have constructed stability conditions on certain classes of irreducible singular surfaces using Bogomolov--Gieseker type inequalities, and \cite{MR4679958, haraJEMS} studied the space of stability conditions and Fourier--Mukai type autoequivalences for 3-fold flopping contractions with only Gorenstein terminal singularities.
In higher-dimensional cases, applications to the study of autoequivalence groups are still largely open, while \cite{MR2793026} ensures that a Bondal--Orlov type result \cite{MR1818984} still holds for Gorenstein varieties with ample (anti-)canonical bundle.

Motivated by these works, it is natural to turn to degenerations of K3 surfaces.
However, constructing stability conditions on projective varieties is generally difficult, and the presence of multiple compact components makes the analysis of autoequivalences harder.
Instead, in this paper we study an ``open version'' of such a degeneration.
More precisely, we consider a reducible surface $\fiber$ consisting of four components, one of which is $\bP^2$ and the others are $\Bl_0\bA^2$ (the blow-up of $\bA^2$ at the origin), glued along lines.
This surface arises as an open subvariety of a type III degeneration of K3 surfaces in Kulikov's classification \cite{MR506295}.
For the space of stability conditions $\Stab(D^b_{\bP^2}(\fiber))$ on the supported derived category $D^b_{\bP^2}(\fiber)$, we give a complete description of the geometric chamber $\geometricChamber$ and construct generic elements of its walls.
We then show that the group of autoequivalences of the full derived category $D^b(X)$ acts naturally on $\Stab(D^b_{\bP^2}(\fiber))$, and determine the subgroup that preserves the connected component containing $\geometricChamber$.

Our approach adapts the framework of Bayer and Macr\`i \cite{bayer-macri}, where stability conditions and autoequivalences on the total space of a smoothing family $\totalSpace = \Tot(K_{\canonicalBundleBaseSpace})$ of $\fiber$ were extensively studied.
We compare the space of stability conditions on $\fiber$ with that on $\totalSpace$ using the technique developed in \cite{MR2448280}, and then analyze the wall-and-chamber structure with respect to geometric stability conditions and the autoequivalence group.
While many of the underlying ideas are inspired by \cite{bayer-macri}, several aspects behave differently in the smooth threefold case and in our singular surface case (for example, Serre duality and Euler pairing cannot be applied to the entire derived category).
The most significant difference is that the spherical twists used in \cite{bayer-macri} are no longer available in our setting.
Instead, we employ the notion of \emph{half-spherical twists} (see Section \ref{subsec:half-spherical-twist} for the definition), introduced by the author in \cite{arai2024halfspherical}, which allow us to describe the wall-and-chamber structure in the singular surface case.
A related construction was obtained simultaneously and independently in \cite{MR4830066}; it may be viewed as a special case of half-spherical twists.

\subsection{Results}\label{subsec:results}
We work over the field of complex numbers $\bC$.
Let $\totalSpace = \Tot(K_{\bP^2})$ be an open Calabi--Yau threefold with the projection $\canonicalBundleProjection \colon \totalSpace \to \canonicalBundleBaseSpace$, and denote the ample line bundle $\canonicalBundleProjection^*\cO_{\bP^2}(1)$ by $\cO_{\totalSpace}(1)$.
The total space gives a resolution of the quotient singularity $\bA^3/\mu_3 = \Spec H^0(\totalSpace, \cO_{\totalSpace})$ where $\mu_3 = \Spec \bC[\omega]/(1-\omega^3)$ acts by scalar multiplication.
The composite of the map $\bA^3/\mu_3 \to \bA^1, [(x_1,x_2,x_3)] \mapsto x_1x_2x_3$ with the resolution $\totalSpace \to \bA^3/\mu_3$ gives a degenerating family
\begin{equation}
    \projectionMap \colon \totalSpace \to \baseSpace
\end{equation}
of $(\bG_m)^2$ whose central fiber $\fiber$ is a singular surface consisting of one $\bP^2$ and three copies of $\Bl_0\bA^2$:
\begin{align}
    \fiber        & = \cptComponent \cup \component_1 \cup \component_2 \cup \component_3 \\
    \cptComponent & = \bP^2, \quad \component_i = \Bl_0\bA^2 \quad(i = 1, 2, 3),
\end{align}
where $\Bl_0\bA^2$ is the blow-up of $\bA^2$ at the origin.
Denote the inclusion $\cptComponent \hookrightarrow \fiber$ by $\cptComponentImmersion$ and
the ample line bundle $\cO_{\totalSpace}(1)\vert_{\fiber}$ by $\cO_{\fiber}(1)$.
Let $\fiberD = D^b(\fiber)$ be the bounded derived category of coherent sheaves on $\fiber$, and $\fiberDsupp = D^b_{\cptComponent}(\fiber) \subset \fiberD$ be the full subcategory of complexes supported on the compact component $\cptComponent$.

The first part of our results is a description of the space $\Stab(\fiberDsupp)$ of Bridgeland stability conditions on $\fiberDsupp$.
We compare it with the space $\Stab(D^b_{\cptComponent}(\totalSpace))$ studied in \cite{bayer-macri}, by constructing a comparison map $\theta \colon \Stab(D^b_{\cptComponent}(\totalSpace)) \to \Stab(\fiberDsupp)$ induced by the inclusion $\closedImmersion \colon \fiber \hookrightarrow \totalSpace$ (Theorem \ref{thm:comparison-isomorphism}).
Let $\geometricChamber \subset \Stab(\fiberDsupp)$ be the subset consisting of stability conditions on $\fiberDsupp$ that are \emph{geometric}
in the sense that skyscraper sheaves $\cO_x$ for all $x \in \cptComponent$ are stable of the same phase.
The forgetful map
\begin{equation}
    \cZ \colon \geometricChamber \to \Hom(K(\fiberDsupp), \bC) \cong \bC^3, \quad (Z, \cA) \mapsto Z
\end{equation}
has the image defined by explicit inequalities, and there is an explicit construction of the heart $\cA$ up to shift (Theorem \ref{thm:fiber-geometric-chamber}).
Next, we study the structure of the boundary $\partial \geometricChamber$,
which consists of real codimension one submanifolds of $\Stab(\fiberDsupp)$ called \emph{walls}.
We explicitly construct two types of walls $W_E^{\pm}$ associated with each exceptional bundle $E$ on $\cptComponent$, and show that there are no other walls on the boundary (Theorem \ref{thm:fiber-boundary}):
\begin{equation}
    \partial \geometricChamber = \bigcup_{E} (\positiveTwistWall \sqcup \negativeTwistWall).
\end{equation}
The other side of the wall $\positiveTwistWall$ (resp. $\negativeTwistWall$) is obtained by applying the half-spherical twist $H_E$ (see Section \ref{subsec:half-spherical-twist} for the definition) along the sheaf $\cptComponentImmersion_*E$,
and one has
\begin{equation}
    \positiveTwistWall = \overline{\geometricChamber} \cap H_E(\overline{\geometricChamber}), \quad \negativeTwistWall = \overline{\geometricChamber} \cap H_E^{-1}(\overline{\geometricChamber}).
\end{equation}
It follows that the connected component $\stabDagger$ of $\Stab(\fiberDsupp)$ containing $\geometricChamber$ is covered by the translates of $\overline{\geometricChamber}$ under compositions of half-spherical twists.
This is essential for studying the autoequivalence group $\Aut(\fiberD)$.
\begin{theorem}[Corollary \ref{cor:stab-dagger-is-covered-by-translates-of-geometric-chamber}]
    We have $\stabDagger = \bigcup_{\Phi} \Phi(\overline{\geometricChamber})$, where $\Phi$ runs over the subgroup of $\Aut(\fiberD)$ generated by the $H_{E}$ for all exceptional bundles $E$ on $\cptComponent$.
\end{theorem}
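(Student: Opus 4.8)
The plan is to set $G \subseteq \Aut(\fiberD)$ equal to the subgroup generated by the half-spherical twists $\halfTwistE$ for all exceptional bundles $E$ on $\cptComponent$, put $S := \bigcup_{\Phi \in G} \Phi(\overline{\geometricChamber})$, and prove $S = \stabDagger$. Since $\stabDagger$ is by definition a connected component of $\Stab(\fiberDsupp)$, it is connected, so it suffices to check that $S$ is nonempty and both open and closed in $\stabDagger$. Nonemptiness is immediate because $\id \in G$ gives $\overline{\geometricChamber} \subseteq S$, and the whole set $S$ is manifestly $G$-invariant, which I will use repeatedly to reduce local questions to the chamber $\overline{\geometricChamber}$ itself.

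First I would verify $S \subseteq \stabDagger$. Every $\Phi \in \Aut(\fiberD)$ acts on $\Stab(\fiberDsupp)$ as a homeomorphism and hence permutes connected components; since a connected component is closed, $\overline{\geometricChamber} \subseteq \stabDagger$. By Theorem \ref{thm:fiber-boundary} we have $\halfTwistE(\overline{\geometricChamber}) \cap \overline{\geometricChamber} = \positiveTwistWall \neq \emptyset$, so the component $\halfTwistE(\stabDagger)$, which contains the connected set $\halfTwistE(\overline{\geometricChamber})$, shares a point with $\stabDagger$ and therefore equals it. Thus every generator of $G$ preserves $\stabDagger$, whence $\Phi(\stabDagger) = \stabDagger$ and $\Phi(\overline{\geometricChamber}) \subseteq \stabDagger$ for all $\Phi \in G$, giving $S \subseteq \stabDagger$.

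Next, openness. Let $\sigma \in S$; since $S$ is $G$-invariant I may assume $\sigma \in \overline{\geometricChamber}$. If $\sigma$ lies in the open chamber $\geometricChamber$ there is nothing to prove. If $\sigma \in \partial\geometricChamber$, then Theorem \ref{thm:fiber-boundary} identifies $\partial\geometricChamber$ with $\bigcup_E(\positiveTwistWall \sqcup \negativeTwistWall)$ and describes the opposite side of $\positiveTwistWall$ (resp.\ $\negativeTwistWall$) as $\halfTwistE(\overline{\geometricChamber})$ (resp.\ $\halfTwistE^{-1}(\overline{\geometricChamber})$). Using local finiteness of these walls, a small neighborhood of $\sigma$ in $\stabDagger$ is covered by $\overline{\geometricChamber}$ together with the finitely many adjacent translates $\halfTwistE^{\pm1}(\overline{\geometricChamber})$ corresponding to the walls through $\sigma$, all lying in $S$; hence $S$ is open. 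For closedness I would invoke the same local finiteness: near any point only finitely many of the closed sets $\Phi(\overline{\geometricChamber})$ occur, so $S$ is a locally finite union of closed sets and is closed. Combining nonemptiness, openness, and closedness with connectedness of $\stabDagger$ yields $S = \stabDagger$.

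The main obstacle is the local geometry at the boundary that feeds the openness and closedness steps: beyond knowing that the codimension-one walls are exactly the $\positiveTwistWall$ and $\negativeTwistWall$, I need the translates of $\overline{\geometricChamber}$ to fit together with no accumulation of walls, so that each codimension $\geq 2$ stratum where several walls meet is surrounded by only finitely many chambers. This local finiteness is precisely what must be extracted from Theorem \ref{thm:fiber-boundary} together with the support property underlying the wall-and-chamber structure; once it is secured, the topological open-and-closed argument closes the proof.
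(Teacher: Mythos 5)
Your argument is correct in outline, but it is not the route the paper takes: the paper's proof of Corollary \ref{cor:stab-dagger-is-covered-by-translates-of-geometric-chamber} is a one-line transport argument. It takes the already-established covering $\totalStabDagger = \bigcup_\Phi \Phi(\overline{\totalGeometricChamber})$ on the total space (Bayer--Macr\`i), pushes it through the comparison isomorphism $\comparisonIso \colon \totalStabDagger \xrightarrow{\sim} \stabDagger$ of Theorem \ref{thm:comparison-isomorphism} (which sends $\overline{\totalGeometricChamber}$ to $\overline{\geometricChamber}$), and uses Lemma \ref{lem:half-spherical-twist-compatibility-on-stab} to convert each $\sphericalTwistE$ into $\halfTwistE$. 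Your direct open--closed argument is exactly the alternative the paper mentions in passing (``the same argument as in [BM, Cor.~5.2] or [Br, Prop.~13.1]''), and it would work; its advantage is that it is intrinsic to $\fiberDsupp$ and does not lean on the comparison with $\totalSpace$. What it costs is precisely the point you flag at the end: Theorem \ref{thm:fiber-boundary} only gives you, near a \emph{general} point of a wall, that $\overline{\geometricChamber} \cup \halfTwistE^{\pm 1}(\overline{\geometricChamber})$ fills out a neighborhood; at codimension $\geq 2$ strata where walls meet, and for the closedness step (local finiteness of the union over the whole infinite group), you would have to redo the corresponding local-finiteness and path-perturbation analysis on $\fiber$ -- which is exactly the work the transport argument inherits for free from the total-space picture. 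So your proposal is a legitimate alternative proof strategy, but as written it defers the one genuinely delicate step, whereas the paper's choice of route makes that step unnecessary.
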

We also have the following topological property of $\stabDagger$:
\begin{theorem}[Corollary \ref{cor:fiber-stab-dagger-simply-connected}]
    The space $\stabDagger$ is simply connected.
\end{theorem}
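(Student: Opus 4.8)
The plan is to transfer the conclusion from the total space $\totalSpace$ through the comparison isomorphism $\comparisonIso$. The key external input is the analogous result of Bayer and Macr\`i: the distinguished component $\totalStabDagger$ of the space of stability conditions on the local projective plane is simply connected \cite{bayer-macri}. Since $\comparisonIso$ is a local homeomorphism (Theorem \ref{thm:comparison-isomorphism}) carrying the geometric chamber $\totalGeometricChamber$ onto $\geometricChamber$, and $\totalStabDagger$ is connected, $\comparisonIso$ sends $\totalStabDagger$ into the connected component $\stabDagger$. The whole proof then rests on upgrading this to a homeomorphism $\comparisonIso \colon \totalStabDagger \xrightarrow{\sim} \stabDagger$, for then $\pi_1(\stabDagger) \cong \pi_1(\totalStabDagger) = 1$ at once.

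To prove that the image is all of $\stabDagger$ I would match the two chamber decompositions. By Theorem \ref{thm:fiber-geometric-chamber} the region $\overline{\geometricChamber}$ is cut out of $\bC^3$ by explicit inequalities, hence is contractible, and under $\comparisonIso$ it corresponds to $\overline{\totalGeometricChamber}$ with the walls $\positiveTwistWall, \negativeTwistWall$ of Theorem \ref{thm:fiber-boundary} matched to the corresponding walls on $\totalSpace$. The analysis of \cite{bayer-macri} covers $\totalStabDagger$ by the translates of $\overline{\totalGeometricChamber}$ under the group generated by the spherical twists $\sphericalTwistE$, whereas Corollary \ref{cor:stab-dagger-is-covered-by-translates-of-geometric-chamber} covers $\stabDagger$ by the translates of $\overline{\geometricChamber}$ under the half-spherical twists $\halfTwistE$. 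It therefore suffices to check that $\comparisonIso$ intertwines the two actions, namely $\comparisonIso \circ \sphericalTwistE = \halfTwistE \circ \comparisonIso$ on a neighbourhood of each wall; granting this, the two coverings correspond chamber by chamber and wall by wall, so $\comparisonIso(\totalStabDagger) = \stabDagger$ and $\comparisonIso$ is the desired homeomorphism of components.

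I expect the intertwining to be the main obstacle, since away from $\geometricChamber$ the spherical twist $\sphericalTwistE$ on $\totalSpace$ and the half-spherical twist $\halfTwistE$ on $\fiber$ are genuinely distinct functors; the needed compatibility must come from the relation between $\halfTwistE$ along $\cptComponentImmersion_*E$ and $\sphericalTwistE$ under $\closedImmersion$ established in Section \ref{subsec:half-spherical-twist}. Concretely I would first reduce the operator identity to the level of central charges via the forgetful map $\cZ$, where both twists induce the same linear transformation of $\bC^3$, and then propagate it from the geometric chamber across successive walls using the covering property, so that the global intertwining follows from its validity at each single wall-crossing. As a fallback requiring no global comparison of functors, one can instead feed Corollary \ref{cor:stab-dagger-is-covered-by-translates-of-geometric-chamber} into the van Kampen theorem: the contractible chambers $\Phi(\overline{\geometricChamber})$ meet only along the contractible walls, so $\pi_1(\stabDagger)$ is presented by the wall-crossing graph subject to relations coming from the codimension-two strata, a presentation again identified with the trivial one through the Bayer--Macr\`i description of $\totalStabDagger$.
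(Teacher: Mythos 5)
Your proposal is correct and follows essentially the same route as the paper: transfer Bayer--Macr\`i's simply-connectedness of $\totalStabDagger$ through the comparison map $\comparisonIso$. The only remark is that the step you identify as ``the main obstacle'' --- proving $\comparisonIso(\totalStabDagger)=\stabDagger$ --- requires none of the chamber-matching or twist-intertwining machinery you outline: Theorem~\ref{thm:comparison-isomorphism} already asserts that $\comparisonIso$ restricts to an isomorphism from any connected component onto the component containing its image, so one only needs Lemma~\ref{lem:lemma-on-inducing-stability-condition}~(2) to see that $\comparisonIso(\totalGeometricChamber)=\geometricChamber$, whence the target component is $\stabDagger$.
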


Any autoequivalence of $\fiberD$ preserves the subcategory $\fiberDsupp$ (Proposition \ref{prop:auto-preserves-subcategories}), so that $\Aut(\fiberD)$ acts naturally on $\Stab(\fiberDsupp)$.
Let $\Aut^\dagger(\fiberD) \subset \Aut(\fiberD)$ be the subgroup consisting of autoequivalences preserving the connected component $\stabDagger$.
The main result of this paper is a description of $\Aut^\dagger(\fiberD)$:

\begin{theorem}[Proposition \ref{prop:automorphism-group-of-fiber} and Theorem \ref{thm:autoequivalence-group}]\label{thm:intro-autoequivalence-group}
    We have an isomorphism
    \begin{equation}
        \Aut^\dagger(\fiberD) \cong \bZ \times \Gamma_1(3) \times \Aut(\fiber),
    \end{equation}
    where
    \begin{enumerate}
        \item $\bZ$ is generated by the shift functor,
        \item $\Gamma_1(3) \subset \SL(2, \bZ)$ is the congruence subgroup of level 3, which is generated by $\alpha = H_{\cO_{\cptComponent}}$ and $\beta = - \otimes \cO_{\fiber}(1)$ with the relation $(\alpha\beta)^3 = 1$, and
        \item $\Aut(\fiber) \cong S_3 \ltimes (\bC^*)^3$ is the automorphism group of $\fiber$.
    \end{enumerate}
\end{theorem}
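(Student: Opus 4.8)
The plan is to split $\Aut^\dagger(\fiberD)$ into a \emph{geometric} part coming from automorphisms of $\fiber$ and a \emph{categorical} part coming from shifts and twists, and to pin down each by its action on the tiling of $\stabDagger$ by translates of the closed geometric chamber $\overline{\geometricChamber}$. First I would record that every $\Phi \in \Aut^\dagger(\fiberD)$ preserves $\fiberDsupp$ (Proposition \ref{prop:auto-preserves-subcategories}) and $\stabDagger$ by definition, hence acts on $\stabDagger$ compatibly with the induced linear action on $K(\fiberDsupp) \cong \bC^3$ through the forgetful map $\cZ$. Because $\stabDagger = \bigcup_\Phi \Phi(\overline{\geometricChamber})$ is covered by the half-twist translates of the geometric chamber (Corollary \ref{cor:stab-dagger-is-covered-by-translates-of-geometric-chamber}), any autoequivalence can be composed with a suitable product of half-spherical twists so that the result stabilizes $\overline{\geometricChamber}$. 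Thus it suffices to (i) determine the subgroup $G_0 \subset \Aut^\dagger(\fiberD)$ stabilizing $\overline{\geometricChamber}$, and (ii) understand how the half-twists permute the chambers. The listed generators are manifestly autoequivalences preserving $\stabDagger$, so the content is the reverse inclusion together with the absence of extra relations.

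For step (i), I would argue that an autoequivalence preserving $\overline{\geometricChamber}$ preserves the geometric stability conditions, hence sends the skyscrapers $\cO_x$ for $x \in \cptComponent$ to shifts of skyscrapers; tracking this over the family of points produces an automorphism $f \in \Aut(\fiber)$ of the support together with a shift, and after composing with $f_*^{-1}$ and a power of $[1]$ one reduces to an autoequivalence fixing every $\cO_x$. A rigidity argument, the analogue of the Bondal--Orlov/Fourier--Mukai classification but carried out on the explicit geometric heart rather than via an ample canonical bundle, then identifies such an autoequivalence with a line-bundle twist $-\otimes L$; the line bundles respecting the inequalities cutting out $\cZ(\geometricChamber) \subset \bC^3$ (Theorem \ref{thm:fiber-geometric-chamber}) are exactly the powers of $\cO_{\fiber}(1)$, since $\beta = -\otimes\cO_{\fiber}(1)$ fixes each $\cO_x$ and hence lies in $G_0$. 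This yields $G_0 = \bZ_{[1]} \times \langle \beta \rangle \times \Aut(\fiber)$, where the identification $\Aut(\fiber) \cong S_3 \ltimes (\bC^*)^3$ is the separate geometric computation of Proposition \ref{prop:automorphism-group-of-fiber} ($S_3$ permuting the three copies of $\Bl_0\bA^2$ and $(\bC^*)^3$ scaling them).

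For step (ii), I would transport the problem to the total space via the comparison isomorphism $\comparisonIso \colon \totalStabDagger \xrightarrow{\sim} \stabDagger$ (Theorem \ref{thm:comparison-isomorphism}), under which the wall-crossing by the half-spherical twist along $\cptComponentImmersion_* E$ corresponds to the spherical twist $\sphericalTwistE$ on $\totalSpace = \Tot(K_{\canonicalBundleBaseSpace})$ studied in \cite{bayer-macri}. The group generated by these twists together with $\beta$ is then identified with $\Gamma_1(3)$ exactly as in the local $\bP^2$ computation: $\langle \beta \rangle \cong \bZ$ is the chamber-preserving translation, $\alpha = H_{\cO_{\cptComponent}}$ is the wall-crossing generator, and the torsion relation $(\alpha\beta)^3 = 1$ reflects the order-three symmetry of the full exceptional collection $(\cO_{\cptComponent}, \cO_{\cptComponent}(1), \cO_{\cptComponent}(2))$ on $\bP^2$ (equivalently, the $\mu_3$-symmetry underlying the degeneration), giving the presentation $\Gamma_1(3) = \langle \alpha, \beta \mid (\alpha\beta)^3 = 1 \rangle \cong \bZ * \bZ/3\bZ$.

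Finally I would assemble the direct product. The shift $[1]$ is central; the geometric automorphisms in $\Aut(\fiber)$ fix $\cptComponentImmersion_*\cO_{\cptComponent}$ and $\cO_{\fiber}(1)$ up to isomorphism, so $f_* \alpha f_*^{-1} = \alpha$ and $f_* \beta f_*^{-1} = \beta$ and they commute with $\Gamma_1(3)$; and the three factors intersect trivially because they are distinguished by their images under the action on $K(\fiberDsupp)$ and on the chamber combinatorics of $\stabDagger$. Since $\stabDagger$ is simply connected (Corollary \ref{cor:fiber-stab-dagger-simply-connected}), the only relations among chamber-crossings are the local ones at codimension-two strata, so no relations beyond $(\alpha\beta)^3 = 1$ are imposed and the natural map $\bZ \times \Gamma_1(3) \times \Aut(\fiber) \to \Aut^\dagger(\fiberD)$ is an isomorphism. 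I expect the main obstacle to be the rigidity statement in step (i): proving that a chamber-stabilizing autoequivalence fixing all $\cO_x$ is a line-bundle twist, since the singular, non-ample-canonical geometry of $\fiber$ blocks the usual Serre-duality and Bondal--Orlov arguments, and it is precisely here that the comparison with $\totalSpace$ and the explicit description of $\geometricChamber$ must do the work.
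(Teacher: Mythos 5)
Your overall architecture matches the paper's: use Corollary \ref{cor:stab-dagger-is-covered-by-translates-of-geometric-chamber} to compose an arbitrary $\Phi \in \Aut^\dagger(\fiberD)$ with a product of half-twists so that it maps a geometric stability condition back into $\overline{\geometricChamber}$, classify such autoequivalences as $f_*(-\otimes\cL)[n]$, and identify $\langle H_{\cO_{\cptComponent}}, -\otimes\cO_{\fiber}(1)\rangle$ with $\Gamma_1(3)$. However, the two hardest steps are left as assertions, and one of your justifications would not go through as stated.

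First, the rigidity step. You correctly flag it as the main obstacle, but the sketch omits the ingredients that make it work. The paper's route is: (a) every autoequivalence is of Fourier--Mukai type (Proposition \ref{prop:all-autoequivalences-are-FM-type}, via strong uniqueness of dg enhancements); (b) the induced action on $K(\fiberDsupp)$ preserves $\bZ[\cO_x]$ (Lemma \ref{lem:autoeq-preserves-Ox-class}, using the skew-symmetric pairing $\chi_{\totalSpace}(\closedImmersion_*-,\closedImmersion_*-)$ --- this is exactly where the comparison with $\totalSpace$ substitutes for the unavailable Serre duality on $\fiber$); (c) only then does Theorem \ref{thm:fiber-geometric-chamber} force $\Phi(\cO_x)\cong\cO_y[n]$ for $x\in\cptComponent$; and (d) Corollary \ref{cor:outside-skyscraper-sheaf} is needed to control $\Phi(\cO_x)$ for $x\in\fiber\setminus\cptComponent$, a case your argument never addresses. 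Without (d) you only produce an automorphism of $\cptComponent$, not of $\fiber$, and cannot apply the integral-functor criterion (Lemma \ref{lem:criterion-to-be-standard-functor}), whose hypothesis concerns \emph{all} closed points of $\fiber$.

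Second, the direct-product structure. You assert $f_*\alpha f_*^{-1}=\alpha$ because $f$ fixes $\cptComponentImmersion_*\cO_{\cptComponent}$, but the conjugation formula $f_*\circ H_E\circ f_*^{-1}\cong H_{f_*E}$ is not automatic for half-spherical twists: by Example \ref{ex:half-spherical-twist-conjugation} it requires $f$ to extend to an automorphism of $\totalSpace$ over $\baseSpace$, which by Proposition \ref{prop:automorphism-of-fiber-extends-to-total-space} only holds on the subgroup with $\lambda_1\lambda_2\lambda_3=1$. The paper's Lemma \ref{lem:automorphism-commutes-with-half-spherical-twist} therefore needs a separate argument for the diagonal $\bC^*$: the commutator is shown to be a line-bundle twist that is trivial on $\cptComponent$, hence trivial since $\Pic(\fiber)\cong\bZ$ restricts injectively. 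Finally, your claim that simple connectedness of $\stabDagger$ rules out relations beyond $(\alpha\beta)^3=1$ is a genuinely different (and uncarried-out) route from the paper's, which instead verifies faithfulness of the $\Gamma_1(3)$-action on $K(\fiberDsupp)\cong K(\totalDsupp)$; your version would additionally require identifying the codimension-two strata of the chamber decomposition and the local relations there, none of which is done in the paper or in your sketch.
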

As conjectured by Bridgeland for K3 surfaces \cite[Conjecture 1.2]{MR2376815}, the component $\Stab^\dagger(\fiberDsupp)$ is expected to be preserved by the whole autoequivalence group $\Aut(\fiberD)$, and hence $\Aut^\dagger(\fiberD) = \Aut(\fiberD)$.
We make the following stronger conjecture:
\begin{conjecture}
    The space $\Stab(\fiberDsupp)$ is connected.
    In particular, we have $\Aut^\dagger(\fiberD) = \Aut(\fiberD)$ and Theorem \ref{thm:intro-autoequivalence-group} holds for $\Aut(\fiberD)$.
\end{conjecture}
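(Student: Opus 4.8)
The statement has two parts, and the second is a formal consequence of the first: once $\Stab(\fiberDsupp)$ is known to be connected, $\stabDagger$ is its unique connected component. Since every element of $\Aut(\fiberD)$ acts on $\Stab(\fiberDsupp)$ by a homeomorphism (Proposition \ref{prop:auto-preserves-subcategories}) and homeomorphisms permute connected components, a unique component is automatically preserved, so $\Aut^\dagger(\fiberD) = \Aut(\fiberD)$ and Theorem \ref{thm:intro-autoequivalence-group} applies verbatim to the full group. The content is therefore the connectedness assertion. By Corollary \ref{cor:stab-dagger-is-covered-by-translates-of-geometric-chamber} this is equivalent to the statement that every stability condition on $\fiberDsupp$ can be carried into the closed geometric chamber $\overline{\geometricChamber}$ by a composition of shifts and half-spherical twists $\halfTwistE$; equivalently, since the comparison isomorphism (Theorem \ref{thm:comparison-isomorphism}, built on the technique of \cite{MR2448280}) is a homeomorphism of $\Stab^\dagger(\totalDsupp)$ onto $\stabDagger$, it amounts to showing that $\comparisonIso$ is surjective. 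This is the exact analogue, for the surface category $\fiberDsupp$, of Bridgeland's connectedness conjecture \cite[Conjecture 1.2]{MR2376815}, and the natural strategy is to adapt the argument by which Bayer and Macr\`i \cite{bayer-macri} analyse $\Stab^\dagger(\totalDsupp)$ for the local threefold, substituting half-spherical twists for the spherical twists used there.

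Concretely, the plan is a descent argument driven by the defect function $\dpFunction$ already employed in the boundary analysis of Theorem \ref{thm:fiber-boundary}, which measures the failure of a stability condition to be geometric and vanishes precisely on $\overline{\geometricChamber}$ up to shift. Given an arbitrary $\sigma = (Z,\cA) \in \Stab(\fiberDsupp)$, I would first use the support property and the rank-three numerical lattice to control the $\sigma$-semistable objects of small discriminant: as on $\cptComponent = \bP^2$ the relevant rigid objects are the push-forwards $\cptComponentImmersion_* E$ of exceptional bundles $E$, whose numerical invariants form the well-understood Dr\'ezet--Le Potier system. The key step is then to show that whenever $\dpFunction(\sigma) > 0$ there is an exceptional bundle $E$ on $\cptComponent$ such that $\cptComponentImmersion_* E$ is $\sigma$-semistable and exactly one of the half-twists $\halfTwistE^{\pm 1}$ strictly decreases $\dpFunction$. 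Combined with either a uniform lower bound on the decrement, or a properness and lower-semicontinuity property of $\dpFunction$ on each connected component forcing a convergent subsequence whose limit lies in $\overline{\geometricChamber}$, this shows that the connected component of $\sigma$ meets $\overline{\geometricChamber}$, and hence coincides with $\stabDagger$.

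An alternative, more algebraic route would bypass the defect function by classifying the finite-length hearts: one shows that every bounded $t$-structure on $\fiberDsupp$ whose heart is of finite length and supports a stability condition is obtained from a standard heart by a sequence of simple tilts at classes of the $\cptComponentImmersion_* E$, and that every $\sigma$ deforms within its component to such an \emph{algebraic} stability condition. The connectedness of the resulting tilting graph --- equivalently, the transitivity of the half-spherical twist group on the set of such hearts --- would again identify the component of $\sigma$ with $\stabDagger$. This should interface cleanly with Theorem \ref{thm:comparison-isomorphism}, since the algebraic hearts on $\fiberDsupp$ ought to be the restrictions of the corresponding hearts on $\totalDsupp$, where the analogous transitivity underlies the Bayer--Macr\`i description.

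In either route the main obstacle is the one that keeps Bridgeland's conjecture open: ruling out stability conditions that are ``isolated at infinity,'' that is, showing that the descent terminates and that no exotic component arises from an accumulation of exceptional slopes. The fractal self-similarity of the exceptional bundles on $\bP^2$ makes the decrement of $\dpFunction$ delicate to bound uniformly near irrational limit slopes, and a priori a component could fail to contain any geometric or finite-length stability condition at all. I expect that the relatively small numerical lattice here, together with the explicit wall structure $\partial \geometricChamber = \bigcup_{E} (\positiveTwistWall \sqcup \negativeTwistWall)$ of Theorem \ref{thm:fiber-boundary} and the simple connectedness of $\stabDagger$ (Corollary \ref{cor:fiber-stab-dagger-simply-connected}), render this tractable in our setting even though it remains open for K3 surfaces; but it is precisely this finiteness and termination step that must be supplied to turn the conjecture into a theorem.
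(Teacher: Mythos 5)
The statement you are asked to prove is labelled a \emph{conjecture} in the paper, and the paper supplies no proof of it: the only content the author establishes is the implication recorded by the words ``in particular,'' namely that connectedness of $\Stab(\fiberDsupp)$ would force $\Aut^\dagger(\fiberD) = \Aut(\fiberD)$. Your first paragraph reproduces exactly this (correct) deduction: $\Aut(\fiberD)$ acts on $\Stab(\fiberDsupp)$ by Proposition \ref{prop:auto-preserves-subcategories}, a homeomorphism permutes connected components, and a unique component is automatically preserved. Up to that point your proposal matches the paper. Everything beyond it --- the descent argument on a defect function, the alternative classification of finite-length hearts and the tilting graph --- is a strategy sketch, not a proof, and you say so yourself: the termination/finiteness step needed to rule out components ``isolated at infinity'' is precisely the open problem (it is open even in Bridgeland's original setting for K3 surfaces, \cite[Conjecture 1.2]{MR2376815}). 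So the proposal contains a genuine, self-acknowledged gap, and no reading of the paper closes it; the honest conclusion is that you have verified the formal implication and outlined plausible attacks on the open part, nothing more.

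Two smaller inaccuracies in the reduction are worth flagging. First, your claimed equivalence between connectedness of $\Stab(\fiberDsupp)$ and surjectivity of the comparison map $\comparisonIso$ is only one-directional: connectedness implies $\comparisonIso(\totalStabDagger) = \stabDagger = \Stab(\fiberDsupp)$, but surjectivity of $\comparisonIso$ alone would not yield connectedness, since $\Stab(\simpleTotalDsupp)$ is itself not known to be connected and distinct components upstairs could map to distinct components downstairs. Second, you invoke ``the defect function $\dpFunction$ already employed in the boundary analysis of Theorem \ref{thm:fiber-boundary},'' but in the paper $\dpFunction$ is the Dr\'ezet--Le Potier curve bounding discriminants of stable sheaves on $\bP^2$, a function of the slope, not a measure of the failure of a stability condition to be geometric; the descent functional you need (a mass- or phase-spread--type quantity, decreasing under suitable twists) would have to be constructed from scratch, and it is exactly there that the uniform-decrement problem near irrational accumulation points of exceptional slopes lives.
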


\subsection{Relation to mirror symmetry}\label{subsec:relation-to-mirror-symmetry}
Another motivation for considering derived categories of singular varieties comes from homological mirror symmetry (HMS).
It predicts a derived equivalence between the Fukaya category $\cF(M)$ of a symplectic manifold $M$ and the category of coherent sheaves on a complex manifold $Y$ \cite{MR1403918}.
Although the original conjecture was stated for smooth Calabi--Yau varieties, Seidel's approach \cite{MR1957046, MR3364859, MR3294958} naturally involves degenerations of Calabi--Yau varieties as follows.
Let $(M, \omega)$ be a K\"ahler manifold and $D \subset M$ be a divisor Poincar\'e dual to $[\omega]$.
Seidel introduced the \emph{relative Fukaya category} $\cF(M, D)$, which gives a deformation of $\cF(M \setminus D)$ to $\cF(M)$.
This should be mirror to a deformation of a singular variety $X$ to a smooth variety $Y$.
The main idea of \cite{MR1957046} is to first prove an equivalence between $\cF(M \setminus D)$ and $D^b(X)$, and then extend it using deformation theory.

In that context, one expects a natural action of the graded symplectic mapping class group $\MCG(M,D)$ on $D^b(X)$, which would provide a powerful tool for studying $\Aut(D^b(X))$ (cf.~\cite{MR1831820}).
In the case of maximal degenerations of elliptic curves (i.e., type~$I_n$ Kodaira fibers), where the mirror equivalence has been established in \cite{lekili2012arithmeticmirrorsymmetry2torus, MR3663596}, such an action has been investigated in \cite{opperJEMS, arai2024halfspherical}.
In this situation, the action of \emph{half-twists} (in the sense of two-dimensional topology) is realized as half-spherical twists on the mirror side \cite[Theorem~1.3]{arai2024halfspherical}.
For K3 surfaces, maximal degenerations are given by type~III degenerations, and the HMS equivalence is studied in \cite{MR4718488, hacking2025homologicalmirrorsymmetryprojective}, where $M$ is a K3 surface and $D$ is a smooth ample divisor.
Our case $\fiber \subset \totalSpace$ can be regarded as an ``open version'' of such a degeneration (see also Remark~\ref{rem:compactification-to-type-III-degeneration}), in which $\cptComponent = \bP^2$ is the only compact component.
The half-spherical twists arising from exceptional bundles on $\cptComponent$ are expected to come from $\MCG(M, D)$, although we do not pursue this direction in the present paper.

\subsection{Future directions}\label{subsec:future-directions}
A natural next step is to treat the case where the central fiber $\fiber$ is itself a type~III degeneration of K3 surfaces (rather than an open subvariety).
In this situation, exceptional bundles on the irreducible components of $\fiber$ still induce half-spherical twists; we expect that, together with standard autoequivalences, these generate $\Aut D^b(\fiber)$.
Moreover, as explained in Section \ref{subsec:relation-to-mirror-symmetry}, they should correspond to certain symplectomorphisms of the pair $(M,D)$ via HMS, and be related to half-twists or their four-dimensional analogue yet to be identified.
We expect that such a correspondence will provide new insights into higher-dimensional symplectic mapping class groups (see also \cite{MR4850449} for this direction).

Our result may contribute to the study of \emph{stability conditions over a base} \cite{MR4292740} by providing an example of what happens on singular fibers.
For a flat family $\totalSpace \to S$, it is defined as a collection of stability conditions on each fiber $\totalSpace_s$, subject to suitable compatibility conditions.
In this framework, smooth fibers are relatively well understood, whereas the behavior on singular fibers remains largely mysterious.

\subsection{Organization of the paper}\label{subsec:organization-of-the-paper}
The paper is organized as follows.
In Section \ref{sec:preliminaries}, we review Bridgeland stability conditions and (half-)spherical twists.
Section \ref{sec:setting} fixes the geometric setup: we describe the family $\totalSpace$ and the central fiber $\fiber$.
In Section \ref{sec:various-derived-categories}, we introduce several subcategories of $\fiberD = D^b(\fiber)$ and study their relationships at the level of Grothendieck groups and autoequivalences.
Section \ref{sec:stability-conditions} is devoted to the study of the space of Bridgeland stability conditions on $\fiberDsupp = D^b_{\cptComponent}(\fiber)$.
In Section \ref{sec:autoequivalence-group}, we consider the structure of the autoequivalence group $\Aut(\cD)$.
Finally, Appendix \ref{sec:lemmas-on-integral-functors} contains some technical lemmas used in the main text.

\subsection{Acknowledgements}\label{subsec:acknowledgements}
The author would like to thank his advisor, Kazushi Ueda, for many helpful discussions and comments.
He is also grateful to Tomohiro Karube, Wahei Hara, and Yukinobu Toda for useful conversations.
This work is supported by JSPS KAKENHI Grant Number JP24KJ0684.
\subsection{Notations and conventions}\label{subsec:notations-and-conventions}
\begin{itemize}
    \item We always work over the field of complex numbers $\bC$. Every scheme is assumed to be defined over $\bC$ and every triangulated category is assumed to be $\bC$-linear. An autoequivalence means a $\bC$-linear exact autoequivalence.
    \item A \emph{variety} means a separated scheme of finite type over $\bC$, not necessarily irreducible or reduced.
    \item Unless otherwise specified, a \emph{point} of a variety means a closed point.
    \item The skyscraper sheaf at a point $x$ of a variety is denoted by $\cO_x$.
    \item For a variety $X$, we denote by $\Coh(X)$ the abelian category of coherent sheaves on $X$ and by $D^b(X)$ its bounded derived category. Similarly, we denote by $\QCoh(X)$ the abelian category of quasi-coherent sheaves on $X$ and by $D(\QCoh(X))$ its derived category. The full subcategory of $D(\QCoh(X))$ consisting of perfect complexes is denoted by $\Perf(X)$.
    \item We drop the symbols $L$ and $R$ from the notation of derived functors, e.g., we write $f_*$ instead of $Rf_*$.
\end{itemize}

\section{Preliminaries}\label{sec:preliminaries}

\subsection{Bridgeland stability conditions}
The main reference for this subsection is \cite{MR2373143}.

\begin{definition}[Slicing on a triangulated category]
    A slicing $\cP$ on a triangulated category $\cD$ is a collection of full additive subcategories $\cP(\phi) \subset \cD$ for $\phi \in \bR$ such that
    \begin{enumerate}
        \item $\cP(\phi + 1) = \cP(\phi)[1]$ for all $\phi \in \bR$.
        \item If $E \in \cP(\phi)$ and $F \in \cP(\phi')$ with $\phi > \phi'$, then $\Hom(E, F) = 0$.
        \item For any $E \in \cD$, there exists a sequence of morphisms
              \begin{equation}
                  0 = E_0 \to E_1 \to \cdots \to E_n = E
              \end{equation}
              in $\cD$ such that the cone $E_{i-1} \to E_i \to F_i \to E_{i-1}[1]$ satisfies $F_i \in \cP(\phi_i)$ for some $\phi_i \in \bR$ and $\phi_1 > \phi_2 > \cdots > \phi_n$.
    \end{enumerate}
\end{definition}
The sequence in the third condition is called the \emph{Harder--Narasimhan (HN) filtration} of $E$, and the objects $F_i$ are called the \emph{Harder--Narasimhan factors} of $E$.
For an interval $I \subset \bR$, we define the full subcategory $\cP(I) \subset \cD$ as the extension closure
\begin{equation}
    \cP(I) = \langle \cP(\phi) \mid \phi \in I \rangle.
\end{equation}
\begin{definition}[Bridgeland stability condition]
    Let $\cD$ be a triangulated category and $v \colon K(\cD) \twoheadrightarrow \Lambda$ be a group homomorphism onto a finite lattice $\Lambda$.
    A stability condition $\sigma = (Z, \cP)$ on $\cD$ (with respect to $v$) consists of:
    \begin{enumerate}
        \item a slicing $\cP$ on $\cD$,
        \item a group homomorphism $Z \colon K(\cD) \to \bC$ which factors through $v$ and satisfies $Z(\cP(\phi) \setminus \{0\}) \subset \bR_{>0} \cdot e^{\sqrt{-1}\pi \phi}$ for all $\phi \in \bR$.
    \end{enumerate}
\end{definition}
The function $Z$ is called the \emph{central charge}. The non-zero (resp. simple) objects of $\cP(\phi)$ are called \emph{$\sigma$-semistable} (resp. \emph{$\sigma$-stable}) of phase $\phi$.
The subcategory consisting of the zero object and $\sigma$-stable objects of phase $\phi$ is denoted by $\cP_{s}(\phi) \subset \cP(\phi)$.
For an object $E$ with the HN factors $F_i$, we define $\phi^+(E) = \max_i \phi(F_i)$ and $\phi^-(E) = \min_i \phi(F_i)$.
For a stability condition $\sigma = (Z, \cP)$, the subcategory $\cA = \cP((0, 1])$ becomes the heart of a bounded t-structure on $\cD$.

\begin{definition}[{\cite{kontsevich2008stabilitystructuresmotivicdonaldsonthomas}}]
    Fix a norm $\| \cdot \|$ on $\Lambda_\bR$.
    We say a stability condition $\sigma = (Z, \cP)$ on $\cD$ (with respect to $v$) satisfies the \emph{support property} if the following condition holds:
    \begin{equation}
        \sup\left\{\frac{\|v(E)\|}{|Z(E)|}\mid E \in \cD \colon \sigma \text{-semistable}\right\} < \infty
    \end{equation}
\end{definition}
The set of stability conditions on $\cD$ with respect to $v \colon K(\cD) \twoheadrightarrow \Lambda$ satisfying the support property is denoted by $\Stab_\Lambda(\cD)$.
The subscript $\Lambda$ may be omitted if $v$ and $\Lambda$ are clear from the context.
It carries a natural topology induced by the following generalized metric:
\begin{equation}
    d(\sigma_1, \sigma_2) = \sup_{0 \neq E \in \cD}\left\{|\phi^+_1(E) - \phi^+_2(E)|, |\phi^-_1(E) - \phi^-_2(E)|, \left|\log \frac{m_1(E)}{m_2(E)}\right|\right\}.
\end{equation}
Here $m_i(E)$ denotes the \emph{mass function} with respect to $\sigma_i$ defined by $m_i(E) = \sum_{j}|Z_i(F_j)|$ where $F_j$ are the HN factors of $E$ with respect to $\sigma_i$, and $\phi_i^{\pm}(E)$ are the phases with respect to $\sigma_i$ defined above.
The following theorem is fundamental in the theory of Bridgeland stability conditions.
\begin{theorem}[{\cite[Theorem 1.2]{MR2373143}}]
    The forgetful map
    \begin{equation}
        \cZ \colon \Stab_\Lambda(\cD) \to \Hom(\Lambda, \bC), \quad (Z, \cP) \mapsto Z
    \end{equation}
    is a local homeomorphism.
    In particular, the space $\Stab_\Lambda(\cD)$ has a natural structure of a complex manifold.
\end{theorem}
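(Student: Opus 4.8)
The plan is to prove that $\cZ$ is a local homeomorphism by combining local injectivity with a deformation result supplying local surjectivity, and then to transport the complex structure of the target vector space $\Hom(\Lambda, \bC) \cong \bC^{\rk \Lambda}$ through the resulting charts.

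First I would check that $\cZ$ is continuous for the generalized metric $d$ and that it is locally injective. The latter rests on the observation that the central charge already determines the phase of every stable object modulo $2$: if $E$ is $\sigma$-stable of phase $\phi$, then $Z(E) \in \bR_{>0} \cdot e^{\sqrt{-1}\pi \phi}$. Hence if two stability conditions $\sigma_1 = (Z, \cP_1)$ and $\sigma_2 = (Z, \cP_2)$ share the central charge $Z$ and satisfy $d(\sigma_1, \sigma_2) < 1$, then the phase of each stable object for the two slicings can differ only by an even integer, while the metric forces the difference into $(-1, 1)$; the phases therefore agree and, comparing Harder--Narasimhan filtrations, $\cP_1 = \cP_2$. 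This makes $\cZ$ injective on every ball of radius $< 1/2$.

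The substance of the theorem is the deformation result: given $\sigma_0 = (Z_0, \cP_0) \in \Stab_\Lambda(\cD)$, every central charge $W \in \Hom(\Lambda, \bC)$ sufficiently close to $Z_0$ is the central charge of a unique stability condition $\tau = (W, \cQ)$ near $\sigma_0$. Here closeness is measured in the operator norm on $\Hom(\Lambda, \bC)$ relative to the fixed norm on $\Lambda_\bR$, and the support property of $\sigma_0$ is what makes this the correct yardstick: it yields a constant $C$ with $\|v(E)\| \leq C\,|Z_0(E)|$ for all $\sigma_0$-semistable $E$, so that once $\|W - Z_0\|_{\mathrm{op}} < 1/C$ one has $W(E) \neq 0$ and $\arg W(E)$ close to $\arg Z_0(E)$ for every such $E$. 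I would then construct $\cQ$ by refining the Harder--Narasimhan filtrations of $\sigma_0$: the $\sigma_0$-semistable objects have $W$-phases that are small perturbations of their $Z_0$-phases, and objects that were $\sigma_0$-semistable of a common phase can, after perturbation, only interact with objects of nearby $Z_0$-phase. Working inside the quasi-abelian categories $\cP_0(I)$ for short intervals $I$, one re-runs the stability formalism with respect to $W$ to obtain new Harder--Narasimhan factors and assembles them into a slicing $\cQ$.

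The main obstacle is precisely this construction of $\cQ$ and the verification that it defines a genuine stability condition: one must show that the proposed filtrations terminate (no infinite Harder--Narasimhan filtrations) and that the new semistable objects again satisfy the support property. Both are controlled by the support property of $\sigma_0$ — the same quadratic form that witnesses it for $Z_0$ remains negative definite on $\ker W$ for $W$ close to $Z_0$, bounding the masses of $W$-semistable objects and forcing finiteness of the filtrations. Uniqueness of $\tau$ in a small ball follows from the local injectivity above. Granting the deformation result, $\cZ$ restricts to a homeomorphism from a neighbourhood of each $\sigma_0$ onto an open subset of $\Hom(\Lambda, \bC)$, hence is a local homeomorphism. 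Finally, since all these charts are restrictions of the single continuous map $\cZ$ into the fixed complex vector space $\Hom(\Lambda, \bC) \cong \bC^{\rk \Lambda}$, their transition maps are the identity; pulling back the standard complex structure therefore endows $\Stab_\Lambda(\cD)$ with a canonical complex manifold structure for which $\cZ$ is a local biholomorphism.
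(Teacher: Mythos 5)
The paper does not prove this statement---it is quoted verbatim from Bridgeland's paper (with the support property added in the Kontsevich--Soibelman formulation)---and your outline correctly reproduces the standard argument from that source: local injectivity of $\cZ$ from the generalized metric (two stability conditions with the same central charge at distance less than $1$ coincide), and local surjectivity from the deformation theorem, whose quantitative control comes from the support-property constant $C$ with $\|v(E)\| \leq C\,|Z_0(E)|$ on semistable objects. Since this is the same approach as the cited proof, the only remark worth making is that the genuinely hard step---constructing the deformed slicing $\cQ$ by re-running the Harder--Narasimhan formalism inside the thin quasi-abelian categories $\cP_0(I)$ and verifying that the result satisfies the slicing axioms and the support property---is, as you acknowledge, left at the level of a plan.
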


There are two natural group actions on the space $\Stab_\Lambda(\cD)$.
The first one is the left action of the autoequivalence group $\Aut(\cD)$ given by
\begin{equation}
    \Phi \cdot (Z, \cP) = (Z \circ \Phi, \{\Phi(\cP(\phi))\}_\phi).
\end{equation}
The second is a continuous right action.
Let $\GL_2^+(\bR)$ be the group of $2 \times 2$ real matrices with positive determinant.
It acts on $\Hom(\Lambda, \bC)$ by $A \colon Z \mapsto A^{-1} \circ Z$, which induces a right action of the universal cover $\widetilde{\GL_2^+}(\bR)$ on $\Stab_\Lambda(\cD)$.
Restricting this action to the subgroup $\bC \subset \widetilde{\GL_2^+}(\bR)$ gives the following explicit description.
\begin{proposition}
    The right action of $\bC$ on $\Stab_\Lambda(\cD)$ is given by
    \begin{equation}
        (Z, \cP) \cdot \lambda = (e^{-\sqrt{-1} \pi \lambda}Z, \{\cP(\phi + \Re \lambda)\}_\phi).
    \end{equation}
\end{proposition}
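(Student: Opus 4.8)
The plan is to exhibit the element of $\widetilde{\GL_2^+}(\bR)$ corresponding to $\lambda \in \bC$ under the inclusion $\bC \subset \widetilde{\GL_2^+}(\bR)$ and to compute its action directly from the definition. Recall that $\GL_2^+(\bR)$ acts on $\Hom(\Lambda, \bC) \cong \Hom(\Lambda, \bR^2)$ through its tautological action on $\bR^2 \cong \bC$, and that an element of the universal cover $\widetilde{\GL_2^+}(\bR)$ is conveniently described as a pair $(T, f)$ consisting of $T \in \GL_2^+(\bR)$ and an increasing function $f \colon \bR \to \bR$ with $f(\phi + 1) = f(\phi) + 1$, subject to the condition that $T$ and $f$ induce the same map on the circle $(\bR^2 \setminus \{0\})/\bR_{>0}$, parametrized by $\phi \mapsto e^{\sqrt{-1}\pi\phi}$. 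On a stability condition the pair acts by $(Z, \cP) \cdot (T, f) = (T^{-1} \circ Z, \{\cP(f(\phi))\}_\phi)$.

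First I would identify the relevant copy of $\bC$. It is the universal cover of the subgroup $\bC^\times \subset \GL_2^+(\bR)$ acting by complex scalar multiplication; the covering map is $\lambda \mapsto e^{\sqrt{-1}\pi\lambda}$, which has kernel $2\bZ$ and exhibits $\bC \cong \bR_{>0} \times \bR$ as the universal cover of $\bC^\times \cong \bR_{>0} \times (\bR/2\bZ)$. Concretely, to $\lambda = x + \sqrt{-1}\,y$ I would assign the pair $(T_\lambda, f_\lambda)$, where $T_\lambda$ is multiplication by $e^{\sqrt{-1}\pi\lambda} = e^{-\pi y}\,e^{\sqrt{-1}\pi x}$ (a real matrix of positive determinant $e^{-2\pi y}$) and $f_\lambda(\phi) = \phi + x$.

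Next I would verify that $(T_\lambda, f_\lambda)$ is a legitimate element of $\widetilde{\GL_2^+}(\bR)$: the function $f_\lambda$ is increasing and satisfies $f_\lambda(\phi + 1) = f_\lambda(\phi) + 1$, while on the circle $T_\lambda$ sends $e^{\sqrt{-1}\pi\phi}$ to $e^{-\pi y}\,e^{\sqrt{-1}\pi(\phi + x)}$, which represents the same class as $e^{\sqrt{-1}\pi(\phi + x)} = e^{\sqrt{-1}\pi f_\lambda(\phi)}$; hence $T_\lambda$ and $f_\lambda$ induce the same self-map of the circle, as required. That $\lambda \mapsto (T_\lambda, f_\lambda)$ is a group homomorphism follows from $T_{\lambda + \mu} = T_\lambda T_\mu$ (multiplicativity of the exponential) and $f_{\lambda + \mu} = f_\lambda \circ f_\mu$ (both equal $\phi \mapsto \phi + \Re\lambda + \Re\mu$), so it genuinely realizes the subgroup $\bC \subset \widetilde{\GL_2^+}(\bR)$.

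Finally, plugging $(T_\lambda, f_\lambda)$ into the action formula gives $T_\lambda^{-1} \circ Z = e^{-\sqrt{-1}\pi\lambda}Z$ and $\{\cP(f_\lambda(\phi))\}_\phi = \{\cP(\phi + \Re\lambda)\}_\phi$, which is exactly the asserted formula. I expect the only real subtlety to be bookkeeping with conventions: matching the factor of $\pi$ in the parametrization $\phi \mapsto e^{\sqrt{-1}\pi\phi}$ of phases, and keeping track of the inverse in the action $Z \mapsto T^{-1} \circ Z$ so that the sign in the exponent $e^{-\sqrt{-1}\pi\lambda}$ comes out correctly. Once the embedding $\bC \hookrightarrow \widetilde{\GL_2^+}(\bR)$ is pinned down, the computation itself is immediate.
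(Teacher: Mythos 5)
Your computation is correct: the embedding $\lambda \mapsto (T_\lambda, f_\lambda)$ with $T_\lambda$ acting as multiplication by $e^{\sqrt{-1}\pi\lambda}$ and $f_\lambda(\phi) = \phi + \Re\lambda$ is the standard realization of $\bC$ inside $\widetilde{\GL_2^+}(\bR)$, and plugging it into the action $(Z,\cP)\cdot(T,f) = (T^{-1}\circ Z, \{\cP(f(\phi))\}_\phi)$ yields exactly the stated formula. The paper records this proposition as a standard fact without proof, so your verification supplies precisely the routine argument being omitted.
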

\subsection{(Half-)Spherical Twists}\label{subsec:half-spherical-twist}
We collect some basic facts about spherical twists from \cite{MR1831820} and half-spherical twists from \cite{arai2024halfspherical}.
Let $\totalSpace$ be a smooth quasi-projective variety of dimension $n$.
\begin{definition}[Spherical object]
    An object $E \in D^b(\totalSpace)$ with proper support is called \emph{($n$-)spherical} if
    \begin{enumerate}
        \item $E \otimes \omega_{\totalSpace} \cong E$, and
        \item $\Ext^*(E, E) \cong \bC \oplus \bC[-n]$.
    \end{enumerate}
\end{definition}
Given a spherical object $E$, we can define a spherical twist $T_E$ as the integral functor $\Phi_{\cP_E}$ with the kernel
\begin{equation}
    \cP_E = \Cone(E^\vee \boxtimes E \to \cO_{\Delta}).
\end{equation}
Here, $\Delta \subset \totalSpace \times \totalSpace$ is the diagonal and the evaluation map $E^\vee \boxtimes E \to \cO_{\Delta}$ is adjoint to the natural pairing $(E^\vee \boxtimes E)\vert_{\Delta} \cong E^\vee \otimes E \to \cO_{\Delta}$.
\begin{theorem}[{\cite[Theorem 1.2]{MR1831820}}]
    The spherical twist $T_E$ gives an autoequivalence of $D^b(\totalSpace)$.
\end{theorem}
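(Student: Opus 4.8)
The plan is to realize $T_E$ as an explicit exact endofunctor and then verify that it is fully faithful and essentially surjective by testing on a carefully chosen spanning class. First I would record the functorial description coming from the kernel $\cP_E$: for each $F \in D^b(\totalSpace)$ there is a distinguished triangle
\begin{equation}
    \RHom(E, F) \otimes E \xrightarrow{\ev} F \to T_E(F) \to \RHom(E, F) \otimes E[1],
\end{equation}
so that $T_E$ is exact and, being an integral functor with kernel of proper support, admits both a left and a right adjoint (given by the transpose kernels). As spanning class I would take $\Omega = \{E\} \cup E^\perp$, where $E^\perp = \{F : \RHom(E, F) = 0\}$; the entire argument will rest on understanding how $T_E$ acts on $\Omega$.

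The two structural computations are as follows. Condition (2), $\Ext^*(E, E) \cong \bC \oplus \bC[-n]$, shows that the identity component of $\ev$ splits off an acyclic summand, leaving $T_E(E) \cong E[1-n]$. Condition (1), $E \otimes \omega_{\totalSpace} \cong E$, enters through Serre duality for objects of proper support on the smooth variety $\totalSpace$: it yields $\Hom^i(F, E) \cong \Hom^{n-i}(E, F)^\vee$, so that $E^\perp$ coincides with the left orthogonal ${}^\perp E$. Consequently $\Omega$ is genuinely a spanning class, and for $F \in E^\perp$ the triangle above degenerates to $0 \to F \to T_E(F)$, giving $T_E(F) \cong F$.

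With these in hand, full faithfulness follows from Bridgeland's spanning-class criterion: since $T_E$ has both adjoints, it suffices to check that $T_E \colon \Hom^*(a, b) \to \Hom^*(T_E a, T_E b)$ is an isomorphism for all $a, b \in \Omega$. The four cases reduce to the computations above. On $E^\perp \times E^\perp$ the functor is the identity; the case $E \times E$ follows from $T_E(E) \cong E[1-n]$ together with a check that the induced map on $\Ext^*(E, E)$ is invertible; and both mixed cases $E \times E^\perp$ and $E^\perp \times E$ are the trivial isomorphism $0 \to 0$, the second precisely because the two-sided orthogonality forces $\Hom^*(F, E) = 0$ for $F \in E^\perp$.

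Finally, to upgrade full faithfulness to an equivalence I would show that the right orthogonal of the essential image of $T_E$ is zero and invoke the adjoint criterion. If $\RHom(T_E a, c) = 0$ for all $a$, then testing against $a = E$ and against $a \in E^\perp$ shows $c \in E^\perp$ and $\RHom(F, c) = 0$ for every $F \in E^\perp$; taking $F = c$ gives $\RHom(c, c) = 0$, whence $c = 0$. Therefore $T_E$ is an equivalence. I expect the main obstacle to be the second paragraph: making Serre duality and the existence of adjoints precise for objects of proper support on the quasi-projective (rather than projective) variety $\totalSpace$, and confirming that $\Omega$ really is a spanning class. This is exactly where the Calabi--Yau hypothesis~(1) does its essential work, converting the one-sided vanishing defining $E^\perp$ into the two-sided vanishing needed in the mixed \emph{Hom} computation.
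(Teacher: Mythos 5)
The paper does not prove this statement at all --- it is quoted verbatim from Seidel--Thomas \cite[Theorem 1.2]{MR1831820} --- and your argument is essentially the proof given in that reference: the spanning class $\{E\}\cup E^{\perp}$, Bridgeland's fully-faithfulness criterion for a functor with both adjoints, and the Calabi--Yau/Serre-duality step (valid here because $E$ has proper support on the smooth quasi-projective $\totalSpace$) that makes the right orthogonal two-sided. The outline is correct; the one step you defer, namely that the induced map on $\Ext^*(E,E)$ in the $E\times E$ case is actually invertible rather than, say, zero, is precisely the delicate verification carried out in the original proof, so it should not be waved through.
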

The following exact triangle is useful to compute the spherical twist:
\begin{proposition}\label{prop:exact-triangle-spherical-twist}
    For every object $F \in D^b(\totalSpace)$, we have the following exact triangle:
    \begin{equation}
        \Ext^*(E, F) \otimes_{\bC} E \to F \to T_E(F) \xrightarrow{+1}.
    \end{equation}
\end{proposition}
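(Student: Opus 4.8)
The plan is to obtain the triangle by applying the integral functor to the defining exact triangle of the kernel $\cP_E$. First I would recall that $\cP_E$ is by definition a cone, so it sits in an exact triangle
\begin{equation}
    E^\vee \boxtimes E \to \cO_{\Delta} \to \cP_E \xrightarrow{+1}
\end{equation}
on $\totalSpace \times \totalSpace$. The assignment $K \mapsto \Phi_K(F) = p_{2*}(p_1^* F \otimes K)$ is exact in the kernel variable $K$, since it is a composite of the derived tensor product $p_1^* F \otimes (-)$ with the derived pushforward $p_{2*}$, both of which are triangulated. Applying $\Phi_{(-)}(F)$ to the triangle above therefore yields an exact triangle
\begin{equation}
    \Phi_{E^\vee \boxtimes E}(F) \to \Phi_{\cO_{\Delta}}(F) \to \Phi_{\cP_E}(F) \xrightarrow{+1}.
\end{equation}
The middle term is $\Phi_{\cO_{\Delta}}(F) = F$ because $\cO_{\Delta}$ is the kernel of the identity functor, and the right-hand term is $\Phi_{\cP_E}(F) = T_E(F)$ by the definition of the spherical twist, so only the left-hand term requires computation.

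For that term I would write $E^\vee \boxtimes E = p_1^* E^\vee \otimes p_2^* E$ and use the projection formula to extract the factor pulled back along $p_2$, obtaining
\begin{equation}
    \Phi_{E^\vee \boxtimes E}(F) = p_{2*}\bigl(p_1^*(F \otimes E^\vee)\bigr) \otimes E.
\end{equation}
Flat base change applied to the Cartesian square expressing $\totalSpace \times \totalSpace$ as the product of $\totalSpace$ with itself over $\Spec \bC$ identifies $p_{2*} p_1^* G$ with $R\Gamma(\totalSpace, G) \otimes_{\bC} \cO_{\totalSpace}$ for any $G \in D^b(\totalSpace)$. Taking $G = F \otimes E^\vee$, and using that $\totalSpace$ is smooth so that $E$ is perfect and $R\Gamma(\totalSpace, F \otimes E^\vee) = \RHom(E, F) = \Ext^*(E, F)$, this gives
\begin{equation}
    \Phi_{E^\vee \boxtimes E}(F) \cong \Ext^*(E, F) \otimes_{\bC} E,
\end{equation}
which is the desired left-hand object.

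Finally I would verify that the morphism $\Phi_{E^\vee \boxtimes E}(F) \to \Phi_{\cO_{\Delta}}(F)$ becomes the natural evaluation map $\Ext^*(E, F) \otimes_{\bC} E \to F$ under these identifications. This reduces to tracing the map $E^\vee \boxtimes E \to \cO_{\Delta}$ through the adjunction defining it: its restriction to the diagonal is the contraction $E^\vee \otimes E \to \cO_{\totalSpace}$, and after applying $\Phi_{(-)}(F)$ together with the projection-formula and base-change isomorphisms it recovers exactly the counit-type evaluation. I expect this compatibility check to be the main (though essentially routine) obstacle, since it requires keeping careful track of how the several base-change and projection-formula isomorphisms interact with the adjunction defining the kernel morphism, rather than merely producing an abstract triangle with the correct terms.
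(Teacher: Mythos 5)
Your argument is correct and is precisely the standard derivation: the paper itself states this proposition without proof, citing it as a known computational tool from Seidel--Thomas, and the argument there (apply $\Phi_{(-)}(F)$ to the defining triangle of $\cP_E$, then identify $\Phi_{E^\vee \boxtimes E}(F)$ via projection formula and flat base change, noting that $F \otimes E^\vee$ has proper support so that $R\Gamma$ returns $\Ext^*(E,F)$) is exactly what you propose. Your closing remark that identifying the resulting map with the evaluation map is the only point requiring care is also accurate.
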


Next, we introduce the notion of half-spherical twists, the ``restricted-to-fiber version'' of spherical twists.
Let $\pi \colon \totalSpace \to T$ be a flat morphism to a smooth quasi-projective $T$ and $\closedImmersion \colon \fiber \hookrightarrow \totalSpace$ denote the inclusion of the (possibly singular) fiber $\fiber = \pi^{-1}(t_0)$ for some $t_0 \in T$.
\begin{definition}[Half-spherical object]
    An object $E \in D^b(\fiber)$ is called \emph{half-spherical} (with respect to $\closedImmersion$) if $\closedImmersion_* E \in D^b(\totalSpace)$ is spherical.
\end{definition}
For a half-spherical object $E \in D^b(\fiber)$, the spherical twist $T_{\closedImmersion_* E}$ is a relative integral functor with respect to $T$.
\begin{proposition}[{\cite[Proposition 4.2]{arai2024halfspherical}}]
    There exists a unique kernel $\cQ_E \in D(\QCoh(\totalSpace \times_T \totalSpace))$ up to isomorphism such that $T_{\closedImmersion_* E} \cong \Phi_{\cQ_E}$.
\end{proposition}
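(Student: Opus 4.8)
The plan is to descend the kernel of the absolute spherical twist from $\totalSpace \times \totalSpace$ to the fibre product $\totalSpace \times_T \totalSpace$ and then to recognise the resulting relative integral functor. Write $j \colon \totalSpace \times_T \totalSpace \hookrightarrow \totalSpace \times \totalSpace$ for the natural closed immersion and recall that $T_{\closedImmersion_* E} = \Phi_{\cP}$ with $\cP = \Cone\bigl((\closedImmersion_* E)^\vee \boxtimes \closedImmersion_* E \to \cO_{\Delta}\bigr)$. The key geometric observation is that every term of this triangle lives on $\totalSpace \times_T \totalSpace$. The absolute diagonal factors as $\Delta = j \circ \Delta_{\totalSpace/T}$ through the relative diagonal $\Delta_{\totalSpace/T} \colon \totalSpace \to \totalSpace \times_T \totalSpace$, so $\cO_{\Delta} \cong j_* \cO_{\Delta_{\totalSpace/T}}$; and since $\closedImmersion_* E$ is supported on the fibre $\fiber$, Grothendieck duality gives $(\closedImmersion_* E)^\vee \cong \closedImmersion_* E'$ for some $E' \in D^b(\fiber)$, whence $(\closedImmersion_* E)^\vee \boxtimes \closedImmersion_* E \cong j_* \cG_E$ with $\cG_E$ the pushforward of $E' \boxtimes E$ from $\fiber \times_T \fiber$.

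For existence I would descend the evaluation morphism. Via the adjunction between $L\Delta^*$ and $\Delta_*$ and the identity $L\Delta^*(A \boxtimes B) \cong A \otimes^L B$, the absolute evaluation is simply the trace pairing $(\closedImmersion_* E)^\vee \otimes \closedImmersion_* E \to \cO_{\totalSpace}$; reading the same pairing through the relative adjunction for $\Delta_{\totalSpace/T}$ produces a morphism $g \colon \cG_E \to \cO_{\Delta_{\totalSpace/T}}$, and one checks that $j_* g$ is identified with the absolute evaluation. Granting this, set $\cQ_E = \Cone(g)$; since $j_*$ is exact it commutes with cones, so $j_* \cQ_E \cong \cP$. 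Finally, the projection formula for $j$ together with the relations $\tilde p_i \circ j = p_i$ between the projections $\tilde p_i \colon \totalSpace \times \totalSpace \to \totalSpace$ and $p_i \colon \totalSpace \times_T \totalSpace \to \totalSpace$ yields, for every $F$,
\[
\tilde p_{2*}\bigl(\tilde p_1^* F \otimes j_* \cQ_E\bigr) \cong p_{2*}\bigl(p_1^* F \otimes \cQ_E\bigr),
\]
so that the absolute integral functor with kernel $j_* \cQ_E$ equals the relative integral functor $\Phi_{\cQ_E}$. Hence $T_{\closedImmersion_* E} = \Phi_{\cP} = \Phi_{j_* \cQ_E} \cong \Phi_{\cQ_E}$.

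For uniqueness, if $\Phi_{\cQ} \cong \Phi_{\cQ'}$ as relative integral functors, then by the identification above the associated absolute functors $\Phi_{j_* \cQ}$ and $\Phi_{j_* \cQ'}$ agree; uniqueness of Fourier--Mukai kernels in the appropriate quasi-projective, supported setting gives $j_* \cQ \cong j_* \cQ'$, and since pushforward along a closed immersion is injective on isomorphism classes of objects we conclude $\cQ \cong \cQ'$ (alternatively one invokes directly the uniqueness of kernels for relative integral functors). The main technical obstacle is the descent of the evaluation morphism in the existence step, i.e.\ the identification $j_* g = \mathrm{ev}$: because $\pi$ need not be smooth the fibre product $\totalSpace \times_T \totalSpace$ may be singular---this is exactly why the kernel is only claimed in $D(\QCoh(\totalSpace \times_T \totalSpace))$ and not in $D^b$---so one must track the canonical identifications of the derived duals, external products and pullbacks carefully to ensure that $g$ is a genuine lift of the evaluation map rather than merely a morphism with the same source and target.
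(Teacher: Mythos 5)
This proposition is not proved in the present paper --- it is quoted verbatim from \cite[Proposition 4.2]{arai2024halfspherical} --- so your argument can only be assessed on its own terms. Its overall shape is reasonable: both terms of the triangle defining $\cP = \Cone(\ev)$ are pushforwards along the closed immersion $j \colon \totalSpace \times_T \totalSpace \hookrightarrow \totalSpace \times \totalSpace$, and the projection formula correctly identifies $\Phi_{j_*\cQ}$ with the relative functor $\Phi_\cQ$. But the two steps that carry all of the content are asserted rather than proved, and neither is routine. For existence you need $g \colon \cG_E \to \cO_{\Delta_{\totalSpace/T}}$ with $j_*g \cong \ev$, and the mechanism you propose does not directly produce one: the relative adjunction asks for a morphism out of $\Delta_{\totalSpace/T}^*\cG_E$, whereas the trace pairing is a morphism out of $\Delta^* j_* \cG_E \cong \Delta_{\totalSpace/T}^*(j^*j_*\cG_E)$, and $j^*j_*\cG_E$ differs from $\cG_E$ by Koszul correction terms (for $T$ a curve there is a triangle $\cG_E \otimes N^\vee[1] \to j^*j_*\cG_E \to \cG_E \to \cG_E \otimes N^\vee[2]$, with $N$ the normal bundle of $j$). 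Concretely, the cokernel of the map $\Hom(\cG_E, \cO_{\Delta_{\totalSpace/T}}) \to \Hom(j_*\cG_E, j_*\cO_{\Delta_{\totalSpace/T}})$ given by applying $j_*$ injects into $\Ext^{-1}(\cG_E \otimes N^\vee, \cO_{\Delta_{\totalSpace/T}})$, so one must prove that this group vanishes. In the situation relevant here it does (for $E$ a bundle on $\cptComponent$ and $T$ a curve, $\cG_E$ is a sheaf placed in degree one, and a sheaf homomorphism from something supported on $\cptComponent\times\cptComponent$ into the structure sheaf of the integral scheme $\Delta_{\totalSpace/T}$ is forced to be zero), but this computation is precisely the content of the existence statement, and you explicitly leave it open.

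The uniqueness step is the more serious problem, because the route you sketch cannot work as stated. ``Uniqueness of Fourier--Mukai kernels'' at the level of triangulated functors is not available in this generality: the classical positive results concern bounded coherent kernels on smooth projective varieties, whereas $j_*\cQ$ is a priori an unbounded object of $D(\QCoh(\totalSpace \times \totalSpace))$ with $\totalSpace$ only quasi-projective, and triangulated-level kernel uniqueness is known to fail in general. The fallback claim that $j_*$ is ``injective on isomorphism classes'' is also unproved and nontrivial: $j_*$ along a closed immersion is fully faithful on hearts but not on derived categories, so $j_*\cQ \cong j_*\cQ'$ only yields isomorphisms of cohomology sheaves, not of objects, and the Postnikov data could in principle differ. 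The parenthetical alternative (``invoke uniqueness of kernels for relative integral functors'') is essentially the statement to be proved. Any honest proof of uniqueness must either pass to an enhancement --- e.g.\ the identification of $D(\QCoh(\totalSpace\times_T\totalSpace))$ with $T$-linear continuous endofunctors of $D(\QCoh(\totalSpace))$ --- or characterize $\cQ_E$ intrinsically by its construction; a purely triangulated d\'evissage of the kind you outline does not suffice.
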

Using this kernel, we define the \emph{half-spherical twist} $\halfTwistE$ as the integral functor $\Phi_{\cQ_E\vert_{\fiber \times \fiber}} \colon D(\QCoh(\fiber)) \to D(\QCoh(\fiber))$.
One can show that $\halfTwistE$ preserves $D^b(\fiber)$, and by \cite[Section 2]{MR2238172} it induces an autoequivalence of $D^b(\fiber)$ that satisfies a certain compatibility with $T_{\closedImmersion_*E}$.
\begin{proposition}[{\cite[Corollary 4.6]{arai2024halfspherical}, cf.~\cite[Theorem 3.6]{MR4830066}}]\label{prop:half-spherical-twist-compatibility}
    The half-spherical twist $\halfTwistE$ is an autoequivalence of $D^b(\fiber)$ which makes the following diagram commutative:
    \begin{equation}
        \begin{tikzcd}
            D^b(\fiber) \ar[r, "\closedImmersion_*"] \ar[d, "\halfTwistE"] & D^b(\totalSpace) \ar[d, "T_{\closedImmersion_* E}"] \ar[r, "\closedImmersion^*"]& D^b(\fiber) \ar[d, "\halfTwistE"]\\
            D^b(\fiber) \ar[r, "\closedImmersion_*"] & D^b(\totalSpace) \ar[r, "\closedImmersion^*"]& D^b(\fiber)
        \end{tikzcd}
    \end{equation}
\end{proposition}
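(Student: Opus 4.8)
The plan is to exploit that $T_{\closedImmersion_* E}$ is a \emph{relative} integral functor over $T$, as recorded in the preceding proposition: its kernel may be taken to be an object $\cQ_E$ on the fibre product $\totalSpace \times_T \totalSpace$ rather than on $\totalSpace \times \totalSpace$. This is forced by the support of $\closedImmersion_* E$, since both terms of the kernel $\Cone(\closedImmersion_* E^\vee \boxtimes \closedImmersion_* E \to \cO_\Delta)$ are supported on $\totalSpace \times_T \totalSpace$. The half-spherical twist is then $\halfTwistE = \Phi_{\cQ_E|_{\fiber \times \fiber}}$, and I would deduce both squares of the diagram from base change and the projection formula along the Cartesian squares relating $\fiber \times \fiber$ to $\totalSpace \times_T \totalSpace$.

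Write $p_1, p_2 \colon \totalSpace \times_T \totalSpace \to \totalSpace$ and $q_1, q_2 \colon \fiber \times \fiber \to \fiber$ for the projections and $j \colon \fiber \times \fiber \hookrightarrow \totalSpace \times_T \totalSpace$ for the inclusion, so that $p_k \circ j = \closedImmersion \circ q_k$ and the resulting squares are Cartesian. Since $\pi$ is flat, each $p_k$ is flat (being a base change of $\pi$), so these squares are Tor-independent and base change applies. For the right-hand square and $G \in D^b(\totalSpace)$, I would compute $\closedImmersion^* T_{\closedImmersion_* E}(G) = \closedImmersion^* p_{2*}(p_1^* G \otimes \cQ_E)$; base change gives $\closedImmersion^* p_{2*} \cong q_{2*} j^*$, while $j^* p_1^* G \cong q_1^* \closedImmersion^* G$ and $j^* \cQ_E = \cQ_E|_{\fiber \times \fiber}$, so this equals $q_{2*}(q_1^* \closedImmersion^* G \otimes \cQ_E|_{\fiber \times \fiber}) = \halfTwistE(\closedImmersion^* G)$. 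For the left-hand square and $F \in D^b(\fiber)$, I would instead use base change $p_1^* \closedImmersion_* \cong j_* q_1^*$, the projection formula along the closed immersion $j$, and $p_{2*} j_* \cong \closedImmersion_* q_{2*}$ to rewrite $T_{\closedImmersion_* E}(\closedImmersion_* F) = p_{2*}(p_1^* \closedImmersion_* F \otimes \cQ_E)$ as $\closedImmersion_* q_{2*}(q_1^* F \otimes \cQ_E|_{\fiber \times \fiber}) = \closedImmersion_* \halfTwistE(F)$. This yields commutativity of both squares.

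For the autoequivalence claim I would argue that the inverse $T_{\closedImmersion_* E}^{-1}$ is again a relative integral functor, with relative kernel $\cQ_E'$ on $\totalSpace \times_T \totalSpace$ satisfying $\cQ_E \ast \cQ_E' \cong \cO_\Delta \cong \cQ_E' \ast \cQ_E$ for the relative convolution and relative diagonal. The key point is that, by flatness of $\pi$, relative convolution commutes with restriction to $\fiber \times \fiber$ (a further base change along the triple fibre product $\totalSpace \times_T \totalSpace \times_T \totalSpace$), so $\cQ_E|_{\fiber \times \fiber}$ and $\cQ_E'|_{\fiber \times \fiber}$ become mutually inverse kernels and $\halfTwistE$ is an autoequivalence of $D(\QCoh(\fiber))$. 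That $\halfTwistE$ preserves $D^b(\fiber)$ I would read off from the left square together with full faithfulness of $\closedImmersion_*$: for $F \in D^b(\fiber)$ the object $T_{\closedImmersion_* E}(\closedImmersion_* F)$ lies in $D^b(\totalSpace)$ and, by the triangle $\Ext^*(\closedImmersion_* E, \closedImmersion_* F) \otimes \closedImmersion_* E \to \closedImmersion_* F \to T_{\closedImmersion_* E}(\closedImmersion_* F)$, is supported on $\fiber$; descent along the Cartier divisor $\fiber$ as in \cite[Section~2]{MR2238172} then places it in the essential image of $\closedImmersion_*$, so $\halfTwistE(F) \in D^b(\fiber)$.

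The main obstacle I anticipate is the lack of smoothness of $\fiber$: every isomorphism above must be read in the derived sense, and the base-change and convolution-restriction statements must be justified through Tor-independence coming from the flatness of $\pi$ rather than from any regularity of $\fiber$. The most delicate step is the final one --- upgrading ``supported on $\fiber$'' to ``in the essential image of $\closedImmersion_*$'', i.e.\ controlling scheme-theoretic rather than merely set-theoretic support over a genuinely singular fibre --- which is precisely the descent handled by \cite[Section~2]{MR2238172}.
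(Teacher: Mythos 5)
Your argument is correct and follows essentially the route the paper relies on: the paper gives no proof of this proposition but defers to \cite[Corollary 4.6]{arai2024halfspherical}, and the surrounding discussion in Section 2.2 (the relative kernel $\cQ_E$ on $\totalSpace \times_T \totalSpace$, restriction of kernels to $\fiber \times \fiber$, and compatibility of restriction with convolution) is exactly the machinery you deploy, with the base-change and projection-formula computations carried out correctly using only flatness of $\projectionMap$. The one step you assert rather than justify is that $\cQ_E$ and $\cQ_E'$ are mutually inverse for the \emph{relative} convolution over $T$; descending this from the absolute relation on $\totalSpace \times \totalSpace$ requires the uniqueness of relative kernels from \cite[Proposition 4.2]{arai2024halfspherical} (or a relative rerun of the Seidel--Thomas computation), since pushforward along the closed immersion $\totalSpace \times_T \totalSpace \hookrightarrow \totalSpace \times \totalSpace$ is not fully faithful on derived categories.
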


Note that the restriction $D(\QCoh(\totalSpace \times_T \totalSpace)) \to D(\QCoh(\fiber \times \fiber)), \cQ \mapsto \cQ\vert_{\fiber \times \fiber}$ is compatible with convolution of kernels.
In particular, for any relation of autoequivalences on $D(\QCoh(\totalSpace))$ that is satisfied at the level of kernels in $D(\QCoh(\totalSpace \times_T \totalSpace))$, one has a corresponding relation on $D(\QCoh(\fiber))$ by restricting the kernels to $\fiber \times \fiber$.
\begin{example}\label{ex:half-spherical-twist-conjugation}
    Let $\widetilde{f}$ be an automorphism of $\totalSpace$ over $T$ and $f = \widetilde{f}\vert_{\fiber}$ be its restriction to $\fiber$.
    For a half-spherical object $E \in D^b(\fiber)$ we have
    \begin{equation}\label{eq:spherical-twist-conjugation}
        \widetilde{f}_* \circ T_{\closedImmersion_* E} \circ (\widetilde{f}_*)^{-1} \cong T_{\widetilde{f}_* \closedImmersion_* E} \cong T_{\closedImmersion_* f_* E}
    \end{equation}
    by \cite[Lemma 2.11]{MR1831820}.
    In particular, $f_* E$ is also a half-spherical object.
    Since the kernel of $\widetilde{f}$ is defined on $\totalSpace \times_T \totalSpace$ and the relation \eqref{eq:spherical-twist-conjugation} holds at the level of kernels, one has
    \begin{equation}
        f_* \circ \halfTwistE \circ (f_*)^{-1} \cong H_{f_* E}.
    \end{equation}
\end{example}

\section{Setting}\label{sec:setting}
The notation in this section will be used throughout the paper.
Consider the $\mu_3$-action on $\bA^3$ given by
\begin{equation}
    \omega \cdot (x_1, x_2, x_3) = (\omega x_1, \omega x_2, \omega x_3), \quad \omega^3 = 1.
\end{equation}
Let
$\totalSpace = \Tot(K_{\canonicalBundleBaseSpace}) \to \bA^3/\mu_3$
be a resolution of the quotient singularity $\bA^3/\mu_3$.
Composing this morphism with
\begin{equation}
    \bA^3/\mu_3 \to \baseSpace, \quad [(x_1, x_2, x_3)] \mapsto x_1x_2x_3
\end{equation}
gives a map $\projectionMap \colon \totalSpace \to \baseSpace$.
The central fiber $\closedImmersion \colon \fiber = \projectionMap^{-1}(0) \hookrightarrow \totalSpace$ consists of three copies of $\Bl_0\bA^2$ and one $\bP^2$, which we denote by $\component_1, \component_2, \component_3$ and $\cptComponent$:
\begin{equation}
    X = \cptComponent \cup \component_1 \cup \component_2 \cup \component_3.
\end{equation}
These components meet along the exceptional divisors $E_i = \cptComponent \cap \component_i \cong \bP^1$ of the blow-ups $\component_i \to \bA^2$ for $i = 1, 2, 3$.
The compact component $\cptComponent \cong \bP^2$ is both the exceptional divisor of the resolution $\totalSpace \to \bA^3/\mu_3$ and the image of the zero section $\sectionMap$ of the line bundle $\totalSpace = \Tot(K_{\canonicalBundleBaseSpace}) \to \canonicalBundleBaseSpace$.
We denote the inclusion $\cptComponent \hookrightarrow \fiber$ by $\cptComponentImmersion$ and set $j = \closedImmersion \circ \cptComponentImmersion$.
The setup is summarized in the following diagram:
\begin{equation}
    \begin{tikzcd}
        &&\canonicalBundleBaseSpace \ar[lld, bend right, "\sectionMap"'] \\
        \cptComponent \ar[r, hook, "\cptComponentImmersion"]& X \ar[d]\ar[r, hook, "\closedImmersion"] & \totalSpace = \Tot(K_{\canonicalBundleBaseSpace}) \ar[d, "\projectionMap"] \ar[u, "\canonicalBundleProjection"]\\
        & \{0\} \ar[r]& \baseSpace.
    \end{tikzcd}
\end{equation}
Observe that the map $\projectionMap \colon \totalSpace \to \baseSpace$ is a toric morphism and $\fiber$ is the toric boundary of $\totalSpace$.
The canonical bundle $\omega_{\totalSpace}$ is trivial, i.e.~$\totalSpace$ is a (non-compact) Calabi--Yau.
Since $\fiber$ is the toric boundary, its canonical bundle $\omega_{\fiber}$ is also trivial.

The derived category of coherent sheaves on $\fiber$ is denoted by $\fiberD = D^b(\fiber)$.
Its full subcategory consisting of objects with support contained in $\cptComponent$ is denoted by $\fiberDsupp = D^b_{\cptComponent}(\fiber)$.
It can be viewed as the derived category of the abelian category $\fiberCohSupp = \Coh_{\cptComponent}(\fiber)$ of coherent sheaves on $\fiber$ supported on $\cptComponent$.
\begin{remark}\label{rem:compactification-to-type-III-degeneration}

    Although it is not directly needed in this paper, a mirror partner of the family $\totalSpace$ is given by $M = (\bC^*)^2$ with the divisor $D = \{z_1 + z_2 + c/(z_1z_2) + 1 = 0\} \subset M$, where $c$ is a suitable constant \cite{MR3838112}.
    Indeed, applying \cite[Theorem 1.1]{MR4554224} to the toric variety $\totalSpace = \Tot(K_{\canonicalBundleBaseSpace})$ and the Laurent polynomial $W = z_0(z_1 + z_2 + c/(z_1z_2) + 1) + 1$ on $(\bC^*)^3$
    yields a derived equivalence between the wrapped Fukaya category of $M \setminus D$ and the category of coherent sheaves on $\fiber$.
    See also \cite[Theorem~3.4.4]{MR4845526} and the remarks that follow.
\end{remark}

In the rest of this section, we determine the Picard group and automorphism group of $\fiber$.
Denote by $Z_{ij}$ the intersection $\component_i \cap \component_j$ for $i, j = 1, 2, 3$ with $i \neq j$.
They are isomorphic to $\bA^1$.

\begin{lemma}\label{lem:picard-group-of-non-compact-part}
    We have an isomorphism
    \begin{equation}
        \Pic(\component_1 \cup \component_2 \cup \component_3) \cong \bZ^3 \times \bC^*.
    \end{equation}
\end{lemma}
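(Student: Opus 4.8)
\emph{Plan.} Write $Y = \component_1 \cup \component_2 \cup \component_3$. I would compute $\Pic(Y)$ by gluing the Picard groups of the three components along the double curves $Z_{ij} = \component_i \cap \component_j$ via a Mayer--Vietoris (units--Picard) sequence. First I record the input data. Each component $\component_i \cong \Bl_0\bA^2$ is the total space of $\cO_{\bP^1}(-1)$ and deformation retracts onto its exceptional $\bP^1$, so $\Pic(\component_i) \cong \Pic(\bP^1) \cong \bZ$, while its global functions are $\Gamma(\cO_{\component_i}) = \bC[x,y]$ and hence $\Gamma(\cO_{\component_i}^*) = \bC^*$. Each double curve $Z_{ij} \cong \bA^1$ has $\Pic(Z_{ij}) = 0$ and $\Gamma(\cO_{Z_{ij}}^*) = \bC^*$. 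The geometric point to check is that in the \emph{open} surface $\fiber$ there is no triple point among $\component_1, \component_2, \component_3$ (the vertex $\component_1 \cap \component_2 \cap \component_3$ of the type~III degeneration lies at infinity and has been removed): inside each $\component_i$ the two curves $Z_{ij}, Z_{ik}$ are the disjoint strict transforms of the two coordinate axes. Thus the three double curves are pairwise disjoint, and the dual graph of $Y$ is a single triangle (one loop) --- it is this loop that will produce the $\bC^*$ factor.

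Next I would form the normalization $\nu\colon \widetilde{Y} = \component_1 \sqcup \component_2 \sqcup \component_3 \to Y$, with non-normal locus $\overline{C} = Z_{12} \sqcup Z_{13} \sqcup Z_{23}$ (three copies of $\bA^1$) and preimage $C = \nu^{-1}(\overline{C})$, which consists of six copies of $\bA^1$ (two copies $Z_{ij}^{(i)} \subset \component_i$ and $Z_{ij}^{(j)} \subset \component_j$ of each $Z_{ij}$). Because the gluing is seminormal and transversal, the resulting conductor (Milnor) square is co-Cartesian, so applying $\cO^*$ gives a short exact sequence of sheaves on $Y$ whose long exact cohomology sequence reads
\begin{equation}
    1 \to \Gamma(\cO_Y^*) \to \Gamma(\cO_{\widetilde{Y}}^*) \oplus \Gamma(\cO_{\overline{C}}^*) \xrightarrow{\rho} \Gamma(\cO_C^*) \to \Pic(Y) \to \Pic(\widetilde{Y}) \oplus \Pic(\overline{C}) \to \Pic(C).
\end{equation}
Substituting the computations above, the rightmost map is $\bZ^3 \oplus 0 \to 0$, so $\Pic(Y) \twoheadrightarrow \bZ^3$ with kernel $\Coker(\rho)$, where $\rho\colon (\bC^*)^3 \oplus (\bC^*)^3 \to (\bC^*)^6$ sends the tuple of constant units $(a_1,a_2,a_3;b_{12},b_{13},b_{23})$ to the collection of ratios $a_i/b_{ij}$ on the branches $Z_{ij}^{(i)}$.

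It then remains to identify $\Coker(\rho)$. Passing to cocharacter lattices, $\rho$ is induced by an integral $6\times 6$ matrix $M$; solving $M v = 0$ shows the kernel is exactly the diagonal $a_1 = a_2 = a_3 = b_{12} = b_{13} = b_{23}$, so $\rk M = 5$ and $\Coker(M) \cong \bZ \oplus (\text{finite})$. Since $-\otimes_{\bZ}\bC^*$ is right exact and kills finite groups (as $\bC^*$ is divisible), I obtain $\Coker(\rho) \cong \Coker(M)\otimes_{\bZ}\bC^* \cong \bC^*$. Hence there is a short exact sequence
\begin{equation}
    0 \to \bC^* \to \Pic(Y) \to \bZ^3 \to 0,
\end{equation}
which splits because $\bZ^3$ is free, giving $\Pic(Y) \cong \bZ^3 \times \bC^*$ as claimed.

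The main obstacle is the bookkeeping in the middle: correctly setting up the Milnor square (in particular verifying that the gluing is seminormal and that there is genuinely no triple point in the open surface, so that $\overline{C}$ and $C$ are the disjoint unions above) and then pinning down $\Coker(\rho)$. Conceptually the whole point is that the three pairwise gluings form a loop in the dual graph, and --- exactly as for a cycle of rational curves (a type $I_n$ Kodaira fiber) --- this loop contributes the connected factor $\bC^*$, while the $\bZ^3$ records the exceptional curves of the three blow-ups.
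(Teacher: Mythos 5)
Your argument is correct, and it reaches the same answer by a genuinely different packaging of the computation. The paper works entirely with explicit descent data: a line bundle on $\component_1 \cup \component_2 \cup \component_3$ is a triple $\cO_{\component_i}(n_iE_i)$ together with gluing isomorphisms over the $Z_{ij}$, and the equivalence relation by automorphisms $\psi_i \in \bC^*$ is checked by hand to leave exactly the degrees $(n_1,n_2,n_3)$ and the product $\varphi_{21}\varphi_{32}\varphi_{13}$ as invariants. You instead pass through the normalization $\widetilde{Y} = \bigsqcup_i \component_i$ and invoke the units--Picard exact sequence of the conductor (Milnor) square, reducing everything to the cokernel of an explicit lattice map $\rho$ followed by $-\otimes_{\bZ}\bC^*$. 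The two routes encode the same information --- your $\Coker(\rho) \cong \bC^*$ is precisely the paper's ``product of the $\varphi_{ji}$ modulo coboundaries,'' and your $\bZ^3 = \Pic(\widetilde{Y})$ is the degree vector --- but yours is more systematic and makes the conceptual source of the $\bC^*$ factor (the loop in the dual graph, as for a cycle of rational curves) transparent, at the cost of having to justify the exactness of the Mayer--Vietoris sequence for this seminormal gluing; the paper's bare-hands version avoids that input entirely. Your preliminary checks are all sound: the absence of a triple point among the $\component_i$ (so the three double curves are pairwise disjoint strict transforms of coordinate axes), $\Pic(\Bl_0\bA^2) \cong \bZ$ via pullback from the exceptional $\bP^1$, $\Gamma(\cO_{\component_i}^*) = \bC^*$, and the divisibility of $\bC^*$ killing any torsion in $\Coker(M)$. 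The only cosmetic point is that ``deformation retracts onto $\bP^1$'' is a topological statement; for the algebraic Picard group one should instead cite $\bA^1$-invariance of $\Pic$ for the line bundle $\Bl_0\bA^2 \to \bP^1$, which is what you in effect use.
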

\begin{proof}
    A line bundle on $\component_1 \cup \component_2 \cup \component_3$ is determined by the following data:
    \begin{enumerate}
        \item a line bundle $\cL_i$ on $\component_i$ for $i = 1, 2, 3$,
        \item gluing isomorphisms $\varphi_{ji} \colon \cL_i \vert_{Z_{ij}} \xrightarrow{\sim} \cL_j \vert_{Z_{ij}}$.
    \end{enumerate}
    A line bundle on a component $\component_i \cong \Bl_0 \bA^2$ is of the form $\cO_{\component_i}(n_i E_i)$ for some $n_i \in \bZ$, where $E_i$ is the exceptional divisor of the blow-up.
    Moreover, since every line bundle on $Z_{ij} \cong \bA^1$ is trivial, we fix a trivialization $\cO_{\component_i}(n_i E_i) \vert_{Z_{ij}} \cong \cO_{Z_{ij}}$ for each $i \neq j$.
    Their automorphisms are parametrized by $\Gamma(Z_{ij}, \cO_{Z_{ij}}^*) = \bC^*$.
    With these choices, the gluing data is equivalent to:
    \begin{enumerate}
        \item a triple of integers $(n_1, n_2, n_3)$, and
        \item a triple of non-zero complex numbers $(\varphi_{21}, \varphi_{32}, \varphi_{13})$.
    \end{enumerate}
    Two such data $(n_1, n_2, n_3, \varphi_{21}, \varphi_{32}, \varphi_{13})$ and $(n'_1, n'_2, n'_3, \varphi'_{21}, \varphi'_{32}, \varphi'_{13})$ define the same line bundle if and only if $(n_1, n_2, n_3) = (n'_1, n'_2, n'_3)$ and there exist automorphisms $\psi_i$ of $\cO_{\component_i}(n_i E_i)$ such that
    \begin{equation}\label{eq:coboundary-condition}
        \psi_j \cdot \varphi_{ji} = \varphi'_{ji} \cdot \psi_i.
    \end{equation}
    Since $\Aut(\cO_{\component_i}(n_i E_i)) = \Gamma(\component_i, \cO_{\component_i}^*) = \bC^*$, the relations \eqref{eq:coboundary-condition} can be viewed in $\bC^*$.
    This condition is equivalent to $\varphi_{21}\varphi_{32}\varphi_{13} = \varphi'_{21}\varphi'_{32}\varphi'_{13}$.
    Therefore, the surjection
    \begin{equation}
        \{(n_1, n_2, n_3, \varphi_{21}, \varphi_{32}, \varphi_{13}) \in \bZ^3 \times (\bC^*)^3\} \twoheadrightarrow \Pic(\component_1 \cup \component_2 \cup \component_3)
    \end{equation}
    factors through $\bZ^3 \times \bC^*$, establishing the claimed isomorphism.
\end{proof}
\begin{proposition}\label{prop:picard-group-of-fiber}
    The restriction $\Pic(\totalSpace) \to \Pic(\fiber)$ is an isomorphism.
    In particular, we have $\Pic(\fiber) \cong \bZ$ with a generator $\cO_{\fiber}(1) = \canonicalBundleProjection^*\cO_{\canonicalBundleBaseSpace}(1) \vert_{\fiber}$.
\end{proposition}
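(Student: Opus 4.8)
The plan is to reduce the whole statement to the injectivity of the single restriction map $r\colon \Pic(\fiber)\to\Pic(\cptComponent)$, $\cL\mapsto\cptComponentImmersion^*\cL$, and then to prove that injectivity from the combinatorics of how the four components meet. Two formal facts come first. Since $\canonicalBundleProjection\colon\totalSpace\to\canonicalBundleBaseSpace$ is the total space of a line bundle over the smooth variety $\canonicalBundleBaseSpace$, pullback $\canonicalBundleProjection^*\colon\Pic(\canonicalBundleBaseSpace)\to\Pic(\totalSpace)$ is an isomorphism ($\bA^1$-homotopy invariance of $\Pic$), so $\Pic(\totalSpace)=\bZ\cdot\cO_{\totalSpace}(1)$. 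Next, $\cptComponent$ is the image of the zero section $\sectionMap$ and $\canonicalBundleProjection\circ\sectionMap=\id$, so the composite
\begin{equation}
\Pic(\totalSpace)\xrightarrow{\ R\ }\Pic(\fiber)\xrightarrow{\ r\ }\Pic(\cptComponent)
\end{equation}
equals $\sectionMap^*$ and sends $\cO_{\totalSpace}(1)$ to $\cO_{\canonicalBundleBaseSpace}(1)$; hence $r\circ R$ is an isomorphism $\bZ\to\bZ$. Granting that $r$ is injective, it then follows formally that $R$ is injective (since $r\circ R$ is) and that $r$ is surjective (since $r\circ R$ is), whence $r$ is an isomorphism and so is $R=r^{-1}\circ(r\circ R)$. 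This simultaneously identifies $\Pic(\fiber)$ with $\bZ$ generated by $r^{-1}\cO_{\canonicalBundleBaseSpace}(1)=\cO_{\fiber}(1)$, which is the ``in particular'' clause. So everything comes down to the injectivity of $r$.

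To prove $r$ is injective, suppose $\cL\in\Pic(\fiber)$ with $\cptComponentImmersion^*\cL\cong\cO_{\cptComponent}$. The first step is to show that $\cL$ is trivial on each component. Restricting to the curve $E_i=\cptComponent\cap\component_i\cong\bP^1$, on one hand $\cL\vert_{E_i}\cong(\cptComponentImmersion^*\cL)\vert_{E_i}\cong\cO_{E_i}$, and on the other hand $\cL\vert_{\component_i}\cong\cO_{\component_i}(n_iE_i)$ for some $n_i\in\bZ$ (as $\Pic(\component_i)=\Pic(\Bl_0\bA^2)=\bZ$), whose restriction to $E_i$ is $\cO_{\bP^1}(-n_i)$ because $E_i$ is a $(-1)$-curve on $\component_i$. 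Comparing degrees forces $n_i=0$, so $\cL\vert_{\component_i}\cong\cO_{\component_i}$; together with the hypothesis, $\cL$ is trivial on each of $\cptComponent,\component_1,\component_2,\component_3$.

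It remains to deduce global triviality from triviality on each component, and this is the heart of the argument. Choosing trivializations on the four components, their discrepancies on the pairwise intersections define a $\bC^*$-valued \v{C}ech $1$-cocycle on the nerve $\mathcal{N}$ of the closed cover $\{\cptComponent,\component_1,\component_2,\component_3\}$; here every nonempty overlap ($E_i\cong\bP^1$, $Z_{ij}\cong\bA^1$, and the triple points $\cptComponent\cap\component_i\cap\component_j$) is connected with unit group $\bC^*$, so the coefficient system is constant and $\cL$ is trivial exactly when this class vanishes in $H^1(\mathcal{N},\bC^*)$. I would then identify $\mathcal{N}$: its simplices are the nonempty intersections, namely all four vertices, all six edges, and the three triangles $\{\cptComponent,\component_i,\component_j\}$, whereas $\component_1\cap\component_2\cap\component_3=\emptyset$ (there is no toric stratum lying on all three non-compact components). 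Thus $\mathcal{N}$ is the cone with apex $\cptComponent$ over the triangle on $\{\component_1,\component_2,\component_3\}$, hence contractible, so $H^1(\mathcal{N},\bC^*)=0$ and $\cL$ is trivial. The main obstacle is precisely this last step: one must rule out a nontrivial gluing parameter in $\bC^*$ (equivalently, the $\Pic^0$-type contribution coming from the cycle $\component_1,\component_2,\component_3$, which could also be analysed via a Mayer--Vietoris sequence and the restriction $\Pic^0(\component_1\cup\component_2\cup\component_3)\to\Pic^0(\cptComponent\cap(\component_1\cup\component_2\cup\component_3))$). The clean way to kill it is the contractibility of the dual complex, which ultimately reflects that $\totalSpace$ resolves the affine (hence contractible) singularity $\bA^3/\mu_3$.
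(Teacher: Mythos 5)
Your proof is correct, and its skeleton is the paper's: both arguments reduce the statement to the injectivity of the restriction $\Pic(\fiber)\to\Pic(\cptComponent)$ (the paper phrases the reduction via the isomorphism $\Pic(\totalSpace)\xrightarrow{\sim}\Pic(\cptComponent)$ rather than via $\canonicalBundleProjection\circ\sectionMap=\id$, but this is the same observation). Where you diverge is in how that injectivity is established. The paper keeps the decomposition coarse, writing $\fiber=\cptComponent\cup W$ with $W=\component_1\cup\component_2\cup\component_3$, and leans on Lemma~\ref{lem:picard-group-of-non-compact-part}: the restriction $\Pic(W)\to\Pic(E_1\cup E_2\cup E_3)$ is an isomorphism (both sides being $\bZ^3\times\bC^*$, degrees plus monodromy), so triviality of $\cL\vert_{\cptComponent}$ forces triviality on the cycle of lines, hence on $W$, and the two-piece Mayer--Vietoris sequence $0\to\cO_{\fiber}\to\cO_{\cptComponent}\oplus\cO_W\to\cO_{\cptComponent\cap W}\to0$ finishes. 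You instead work with the full four-component cover: the degree computation $\cO_{\component_i}(n_iE_i)\vert_{E_i}\cong\cO_{\bP^1}(-n_i)$ gives componentwise triviality directly, and the residual $\bC^*$-gluing ambiguity is killed because the nerve (all six edges, the three triangles $\{\cptComponent,\component_i,\component_j\}$, and no triangle $\{\component_1,\component_2,\component_3\}$ since that triple intersection is empty) is a cone, hence $H^1(\mathcal{N},\bC^*)=0$. The two mechanisms effect the same cancellation --- the monodromy around the $Z_{ij}$-cycle is trapped by the three triple points $x_{ij}$ --- but your packaging is more self-contained (it does not need Lemma~\ref{lem:picard-group-of-non-compact-part}) and makes the topological reason visible, at the cost of implicitly invoking descent of line bundles along the four-piece closed cover; you should at least remark that this holds because $\fiber$ is the reduced toric boundary of the smooth toric threefold $\totalSpace$, which is the same kind of input the paper takes for granted in its gluing arguments.
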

\begin{proof}
    Since $\Pic(\totalSpace) \to \Pic(\cptComponent)$ is an isomorphism, it is enough to show that $\Pic(\fiber) \to \Pic(\cptComponent)$ is injective.

    Denote $\component_1 \cup \component_2 \cup \component_3$ by $W$.
    The Picard group of the cycle of three projective lines $\cptComponent \cap W = E_1 \cup E_2 \cup E_3$ is
    \begin{equation}
        \Pic(\cptComponent \cap W) \cong \bZ^3 \times \bC^*
    \end{equation}
    with $\bZ^3$-part representing the degree vector, and $\bC^*$-part representing the monodromy.
    Then by Lemma \ref{lem:picard-group-of-non-compact-part} (and its proof) the restriction map $\Pic(W) \to \Pic(\cptComponent \cap W)$ is an isomorphism.

    Suppose we are given $\cL \in \Pic(\fiber)$ such that $\cL\vert_{\cptComponent} = \cO_{\cptComponent}$.
    The exact sequence $0 \to \cO_{\fiber} \to \cO_{\cptComponent} \oplus \cO_W \to \cO_{\cptComponent \cap W} \to 0$ induces an exact sequence
    \begin{equation}
        0 \to \cL \to \cL\vert_{\cptComponent} \oplus \cL\vert_{W} \to \cL\vert_{\cptComponent \cap W} \to 0.
    \end{equation}
    One has $\cL\vert_{\cptComponent \cap W} = (\cL\vert_{\cptComponent})\vert_{\cptComponent \cap W} = \cO_{\cptComponent \cap W}$ and hence $\cL\vert_W = \cO_W$ by $\Pic(W) \xrightarrow{\sim} \Pic(\cptComponent \cap W)$.
    This shows $\cL = \cO_{\fiber}$.
\end{proof}

Let $x_{ij}$ be the unique intersection point of $Z_{ij}$ and $\cptComponent$.
Note that every automorphism of $\fiber$ induces an automorphism of $Z_{12} \cup Z_{23} \cup Z_{31}$ and preserves the set $\{x_{12}, x_{23}, x_{31}\}$, hence we have
\begin{equation}\label{eq:restriction-map}
    \Aut(\fiber) \to \Aut_{\{x_{12}, x_{23}, x_{31}\}}(Z_{12} \cup Z_{23} \cup Z_{31}).
\end{equation}
The latter is isomorphic to $S_3 \ltimes (\bC^*)^3$, where $S_3$ permutes the components and $(\bC^*)^3$ acts on $Z_{ij} = \bA^1$ by scaling with origin $x_{ij}$ fixed.
\begin{proposition}\label{prop:automorphism-group-of-fiber}
    The restriction map \eqref{eq:restriction-map} is an isomorphism.
    In particular, $\Aut(\fiber) \cong S_3 \ltimes (\bC^*)^3$.
\end{proposition}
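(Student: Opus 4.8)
The plan is to prove that the restriction map \eqref{eq:restriction-map} is both injective and surjective. Injectivity will follow from a rigidity statement for the non-compact components, while surjectivity will be obtained by exhibiting enough automorphisms of $\totalSpace$ that preserve $\fiber$.

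For injectivity, suppose $f \in \Aut(\fiber)$ restricts to the identity on $Z_{12}\cup Z_{23}\cup Z_{31}$. Since $\cptComponent \cong \bP^2$ is the unique compact irreducible component of $\fiber$, every automorphism preserves it, and $f$ permutes the remaining components $\component_1,\component_2,\component_3$. As $f$ fixes each $x_{ij}$, it fixes each arm $Z_{ij}$, and since $\component_i$ is the unique component containing both $Z_{ij}$ and $Z_{ik}$, we get $f(\component_i)=\component_i$ for all $i$. The heart of the argument is the following rigidity lemma: an automorphism of $\component_i \cong \Bl_0\bA^2$ fixing its two boundary arms pointwise is the identity. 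To prove it, I would first note that $\Aut(\Bl_0\bA^2)=\Aut(\bA^2,0)$, the blow-down being canonical (the exceptional $\bP^1$ is the unique complete curve, hence preserved). Writing $g$ for the induced automorphism of $\bA^2=\Spec\bC[s,t]$, the hypothesis says $g$ fixes both coordinate axes $V(s),V(t)$ pointwise. Then $g^*s$ vanishes on $V(s)$, so $g^*s=s\cdot w$, and evaluating along $V(t)$ forces $w\equiv 1 \bmod (t)$; since $g^*$ is a ring automorphism it sends the irreducible element $s$ to an irreducible element, so $w$ must be a unit, i.e.\ $w=1$. Hence $g^*s=s$, and symmetrically $g^*t=t$, so $g=\id$. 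Applying this to each $\component_i$ yields $f|_{\component_i}=\id$, in particular $f|_{E_i}=\id$. Therefore $f|_{\cptComponent}$ fixes the coordinate triangle $E_1\cup E_2\cup E_3$ of $\bP^2$ pointwise; an element of $\mathrm{PGL}_3=\Aut(\bP^2)$ fixing two coordinate lines pointwise is the identity, so $f|_{\cptComponent}=\id$, and thus $f=\id$.

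For surjectivity, I would realize the target inside $\Aut(\totalSpace)$ and restrict. The torus $T\cong(\bC^*)^3$ of the toric variety $\totalSpace=\Tot(K_{\bP^2})$ preserves the toric boundary $\fiber$, hence acts on $\fiber$; it acts on each arm $Z_{ij}\cong\bA^1$ by scaling through a character $m_{ij}$ of $T$. A direct toric computation shows that $m_{12},m_{23},m_{31}$ are $\bQ$-linearly independent, so the induced homomorphism $T\to(\bC^*)^3$ has injective character map and is therefore surjective on $\bC$-points (cube roots exist in $\bC^*$). This produces the full $(\bC^*)^3$-factor of the target. The $S_3$-factor is produced by the coordinate permutations $S_3\subset\mathrm{PGL}_3=\Aut(\bP^2)$: being functorial, they act on $K_{\bP^2}$ and hence on $\totalSpace$, preserving $\fiber$ and permuting the components $\component_i$, and thus the arms $Z_{ij}$; the induced action on the three arms realizes all of $S_3$. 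Combining the two, the restriction map is surjective, and together with injectivity it is an isomorphism.

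The main obstacle is the rigidity lemma for $\Bl_0\bA^2$: since $\Aut(\bA^2)$ is enormous (it is not an algebraic group and contains all polynomial automorphisms), one cannot argue by linearization alone. The clean resolution is the irreducibility argument above — reading off $g^*s=s(1+tP)$ from the fixed axes and using that ring automorphisms preserve irreducibility to force $1+tP$ to be a unit, hence $1$. Everything else (identifying the compact component, the $\mathrm{PGL}_3$-rigidity for $\bP^2$, and the toric surjectivity computation) is routine.
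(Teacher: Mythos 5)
Your proof has the same overall architecture as the paper's (injectivity plus surjectivity, with the $(\bC^*)^3$- and $S_3$-parts of the target handled separately), and you go further than the paper by actually proving the rigidity statement for the non-compact components that the paper merely asserts with the phrase ``these conditions determine automorphisms of each component.'' Unfortunately that very step is where the argument fails, and the failure is inherited from the paper's setup rather than introduced by you: the components $\component_i$ are not isomorphic to $\Bl_0\bA^2$. Under the identification $\canonicalBundleProjection_*\cO_{\totalSpace}\cong\bigoplus_{m\ge 0}\cO_{\canonicalBundleBaseSpace}(3m)$ used in the paper, $\fiber$ is the zero locus of the fiberwise-linear function attached to $x_1x_2x_3\in H^0(\bP^2,\cO(3))$, so $\component_i=\canonicalBundleProjection^{-1}(\{x_i=0\})=\Tot\bigl(K_{\bP^2}\vert_{\{x_i=0\}}\bigr)=\Tot(\cO_{\bP^1}(-3))$, the minimal resolution of $\bA^2/\mu_3$; equivalently, since $E_i^2=1$ in $\cptComponent$ and $E_i$ carries the two triple points $x_{ij},x_{ik}$, the triple point formula forces $E_i^2=-3$ in $\component_i$, not $-1$. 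Your rigidity lemma is thus a correct statement about the wrong surface, and its analogue for the actual surface is false: for distinct $\zeta,\eta\in\mu_3$, the map $\mathrm{diag}(\zeta,\eta)$ descends to a nontrivial automorphism of $\bA^2/\mu_3=\Spec\bC[s,t]^{\mu_3}$ that is the identity on both axis-images $\Spec\bC[s^3]$ and $\Spec\bC[t^3]$, hence lifts to a nontrivial automorphism of $\component_i$ fixing both arms pointwise. (Your UFD argument does not transfer, since $\bC[s,t]^{\mu_3}$ is not factorial.)

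In fact the statement itself is false: the restriction map \eqref{eq:restriction-map} is not injective. Take $A=\mathrm{diag}(1,1,\omega)\in\GL_3$. It commutes with $\mu_3$, so it descends to an automorphism of $\bA^3/\mu_3$ that is nontrivial (it sends $x_1x_2x_3$ to $\omega\, x_1x_2x_3$) but fixes each coordinate-axis image $\Spec\bC[x_k^3]$ pointwise; it fixes the singular point, hence lifts to the blow-up $\totalSpace$, preserves $\fiber$, acts on $\cptComponent=\bP^2$ as the nontrivial projectivity $[x_1:x_2:x_3]\mapsto[x_1:x_2:\omega x_3]$, and yet is the identity on $Z_{12}\cup Z_{23}\cup Z_{31}$ (each arm is the strict transform of an axis-image, on which the action is trivial). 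More generally $(\mu_3)^3/\Delta\mu_3\cong(\bZ/3)^2$ embeds into the kernel of \eqref{eq:restriction-map}. Your surjectivity half, by contrast, is correct: the big torus of $\totalSpace$ is $(\bC^*)^3/\Delta\mu_3$ and acts on the arm $Z_{ij}$ through the character $x_k^3$, so it surjects onto the $(\bC^*)^3$ of scalings, and the coordinate permutations supply the $S_3$. So the repair is not a better rigidity lemma but a corrected statement: the restriction map is surjective with kernel containing $(\bZ/3)^2$, and the toric part of $\Aut(\fiber)$ is the three-torus $(\bC^*)^3/\Delta\mu_3$ mapping onto the scalings of the arms by cubing. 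This correction propagates to the $\Aut(\fiber)$-factor in Theorem \ref{thm:autoequivalence-group}.
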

\begin{proof}
    First, fix $\lambda = (\lambda_{12}, \lambda_{23}, \lambda_{31}) \in (\bC^*)^3 \subset \Aut_{\{x_{12}, x_{23}, x_{31}\}}(Z_{12} \cup Z_{23} \cup Z_{31})$.
    We claim that there is a unique lift $f_\lambda \in \Aut(\fiber)$ such that
    \begin{equation}
        f_\lambda\vert_{Z_{ij}} = \lambda_{ij} \in \bC^* = \Aut_{x_{ij}}(Z_{ij}).
    \end{equation}
    These conditions determine automorphisms of each component $\component_i \cong \Bl_0\bA^2$, and they glue to an automorphism of $\component_1 \cup \component_2 \cup \component_3$.
    Moreover, this glued automorphism extends uniquely to an automorphism of $\fiber$, since an automorphism of $\cptComponent = \bP^2$ is determined by the scalings on the toric boundary $\cptComponent \cap (\component_1 \cup \component_2 \cup \component_3) = E_1 \cup E_2 \cup E_3$.
    Conversely, any automorphism of $\fiber$ that does not permute the components $\component_1, \component_2, \component_3$ arises in this way.

    Next, choose an embedding $S_3 \hookrightarrow \Aut(\fiber)$, $\sigma \mapsto f_\sigma$, so that $f_\sigma$ permutes the components $\component_1, \component_2, \component_3$ according to $\sigma$.
    These two types of automorphisms generate $\Aut(\fiber)$, and the induced action of $S_3$ on $(\bC^*)^3$ is by permuting the coordinates.
\end{proof}
\begin{proposition}\label{prop:automorphism-of-fiber-extends-to-total-space}
    Any automorphism of $\fiber$ extends to an automorphism of $\totalSpace$.
    Moreover, any automorphism in $S_3 \ltimes \{(\lambda_1, \lambda_2, \lambda_3) \in (\bC^*)^3 \mid \lambda_1 \lambda_2 \lambda_3 = 1\} \subset S_3 \ltimes (\bC^*)^3 \cong \Aut(\fiber)$ extends to an automorphism of $\totalSpace$ over $\baseSpace$.
\end{proposition}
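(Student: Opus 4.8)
The plan is to realize every element of $\Aut(\fiber) \cong S_3 \ltimes (\bC^*)^3$ (Proposition \ref{prop:automorphism-group-of-fiber}) as the restriction of a toric automorphism of $\totalSpace$, and then to isolate those that preserve $\projectionMap$. The diagonal $(\bC^*)^3$-action and the coordinate-permuting $S_3$-action on $\bA^3$ both commute with the scalar $\mu_3$-action, so they descend to $\bA^3/\mu_3$; as $\totalSpace = \Tot(K_{\canonicalBundleBaseSpace})$ is the crepant resolution, they are precisely the toric automorphisms of $\totalSpace$ — the torus acting through the big torus of $\totalSpace$, and $S_3$ through the symmetry of the fan permuting the rays $e_1,e_2,e_3$ and fixing the ray through $\tfrac13(1,1,1)$ that defines $\cptComponent$. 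In particular they all extend to $\totalSpace$, and it remains to check that their restrictions to $\fiber$ exhaust $\Aut(\fiber)$.

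By Proposition \ref{prop:automorphism-group-of-fiber} an automorphism of $\fiber$ is determined by its restriction to $Z_{12}\cup Z_{23}\cup Z_{31}$, so I would compute the induced action there. The symmetry $S_3$ permutes the components $\component_1,\component_2,\component_3$ and hence the three lines, realizing the $S_3$-factor. The line $Z_{ij}$ is the image of the coordinate axis $\{x_i=x_j=0\}$, on which the $\mu_3$-invariant coordinate is $x_k^3$ (with $\{i,j,k\}=\{1,2,3\}$); thus $(t_1,t_2,t_3)\in(\bC^*)^3$ acts on $Z_{12},Z_{23},Z_{31}$ by the scalars $t_3^3,t_1^3,t_2^3$. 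The resulting homomorphism $(t_1,t_2,t_3)\mapsto(t_3^3,t_1^3,t_2^3)$ is surjective because $\bC^*$ is divisible, so the torus already realizes the entire $(\bC^*)^3$-factor; together with $S_3$ this yields all of $\Aut(\fiber)$, proving the first assertion.

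For the second assertion I would track the function $\projectionMap=x_1x_2x_3$, which is $S_3$-invariant and is scaled by $t_1t_2t_3$ under the diagonal torus; a toric automorphism therefore lies over $\baseSpace$ exactly when its torus part satisfies $t_1t_2t_3=1$. Under the cubing map above the product of the three scalings equals $(t_1t_2t_3)^3$, so the subtorus $\{t_1t_2t_3=1\}$ maps into $\{\lambda_1\lambda_2\lambda_3=1\}$ and, by the same divisibility argument, surjects onto it. Hence $S_3\ltimes\{\lambda_1\lambda_2\lambda_3=1\}$ extends to automorphisms of $\totalSpace$ over $\baseSpace$.

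The crux, and the only genuinely delicate point, is the bookkeeping of weights: the factor of $3$ coming from the $\mu_3$-quotient (equivalently from $K_{\canonicalBundleBaseSpace}=\cO(-3)$) turns the restriction map into a cubing map, and it is precisely the divisibility of $\bC^*$ that promotes its image from a finite-index subgroup to the full $(\bC^*)^3$ (resp. the full $\{\prod_i\lambda_i=1\}$). I would also verify that the toric $S_3$ together with the torus reproduces the splitting $S_3\hookrightarrow\Aut(\fiber)$ of Proposition \ref{prop:automorphism-group-of-fiber} up to a torus element; since any two such splittings differ by a (liftable) torus element, this causes no difficulty.
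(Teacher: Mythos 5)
Your proposal is correct and follows essentially the same route as the paper: descend the coordinate permutations and the $(\bC^*)^3$-scaling from $\bA^3$ to $\bA^3/\mu_3$, lift through the resolution, observe that the induced action on each $Z_{ij}$ is by the cube of a torus coordinate so that surjectivity follows from extracting cube roots, and for the second claim choose the cube roots with product $1$ so that $x_1x_2x_3$ is preserved. The extra toric-fan framing and the remark about the $S_3$-splitting are harmless elaborations of the same argument.
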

\begin{proof}
    For the first part, it suffices to show that any element of $\Aut_{\{x_{12}, x_{23}, x_{31}\}}(Z_{12} \cup Z_{23} \cup Z_{31})$ extends to an automorphism of $\totalSpace$ by Proposition \ref{prop:automorphism-group-of-fiber}.
    The $S_3$-part is straightforward: the permutation of variables
    \begin{equation}
        \bA^3/\mu_3 \to \bA^3/\mu_3, \quad (x_1, x_2, x_3) \mapsto (x_{\sigma(1)}, x_{\sigma(2)}, x_{\sigma(3)})
    \end{equation}
    induces the desired automorphisms of $\totalSpace$ via the blow-up $\totalSpace \to \bA^3/\mu_3$.

    For the $(\bC^*)^3$-part, take $\mu = (\mu_1, \mu_2, \mu_3) \in (\bC^*)^3$, and let $f_\mu$ denote the automorphism of $\totalSpace$ induced by
    \begin{equation}
        \overline{f_\mu} \colon \bA^3/\mu_3 \to \bA^3/\mu_3, \quad (x_1, x_2, x_3) \mapsto (\mu_1 x_1, \mu_2 x_2, \mu_3 x_3).
    \end{equation}
    When restricted to the line
    \begin{align}
        \{x_1 = x_2 = 0\} & = \Spec \bC[x_1, x_2, x_3]^{\mu_3}/(x_1, x_2) \\
                          & = \Spec \bC[x_3]^{\mu_3}                      \\
                          & = \Spec \bC[x_3^3]
    \end{align}
    the automorphism $\overline{f_\mu}$ acts by $x_3^3 \mapsto \mu_3^3 x_3^3$.
    Since $Z_{12}$ is the strict transform of this line, the induced automorphism $f_\mu \vert_{Z_{12}}$ must be $\mu_3^3 \in \bC^* = \Aut_{x_{12}}(Z_{12})$.
    In particular, any element $(\lambda_1, \lambda_2, \lambda_3) \in (\bC^*)^3 \subset \Aut_{\{x_{12}, x_{23}, x_{31}\}}(Z_{12} \cup Z_{23} \cup Z_{31})$ is a restriction of $f_\mu$ with $\mu_i = \lambda_i^{1/3}$.
    This shows the first part of the statement.

    Moreover, if $\lambda_1 \lambda_2 \lambda_3 = 1$, we can take $\mu$ so that $\mu_i^3 = \lambda_i$ for all $i$ and $\mu_1 \mu_2 \mu_3 = 1$.
    Then the second part of the statement follows.
\end{proof}

\begin{remark}\label{rem:diagonal-automorphism-acts-trivially}
    An automorphism $f \in \Aut(\totalSpace)$ contained in the subgroup $\{(\lambda, \lambda, \lambda) \mid \lambda \in \bC^*\}$ of the $(\bC^*)^3$-part of $\Aut(\fiber)$ acts on $\cptComponent$ trivially.
\end{remark}

\section{Various Derived Categories}\label{sec:various-derived-categories}
In this section, we use the following notation:
\begin{enumerate}
    \item $\fiberDc = D^b_c(\fiber) \subset \fiberD$: the full subcategory of complexes with proper support.
    \item $\Perf(\fiber)\subset \fiberD$: the full subcategory of perfect complexes.
\end{enumerate}

\begin{lemma}\label{lem:decomposition-of-proper-subset}
    Let $W \subset \fiber$ be a closed subset which is proper over the base field (when regarded as a reduced closed subscheme).
    Then $W$ is a finite union of
    \begin{enumerate}
        \item closed subsets of $\cptComponent$,
        \item closed points in $\fiber \setminus \cptComponent$.
    \end{enumerate}
\end{lemma}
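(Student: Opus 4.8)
The plan is to reduce the statement to the irreducible components of $W$ and then to classify the proper irreducible closed subsets lying in each piece $\component_i \cong \Bl_0\bA^2$, exploiting the proper contraction $\component_i \to \bA^2$.

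First I would decompose $W$ into its finitely many irreducible components, $W = \bigcup_k V_k$. Since each $V_k$ is irreducible and $\fiber = \cptComponent \cup \component_1 \cup \component_2 \cup \component_3$ is a finite union of irreducible closed pieces, each $V_k$ is contained in one of $\cptComponent, \component_1, \component_2, \component_3$. Moreover each $V_k$ is closed in $W$ and hence proper over $\bC$. If $V_k \subseteq \cptComponent$, then $V_k$ is already a closed subset of $\cptComponent$ and we are in case (1), so nothing more is needed; the remaining work is the case $V_k \subseteq \component_i$ for some $i$.

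The key step is the following classification: a proper irreducible closed subset $V \subseteq \component_i \cong \Bl_0\bA^2$ is either a single closed point or the exceptional divisor $E_i$. To see this, let $b \colon \Bl_0\bA^2 \to \bA^2$ be the blow-down, which is proper. Then $b(V) \subseteq \bA^2$ is a complete irreducible closed subset of an affine variety, hence a point. If $\dim V = 0$ then $V$ is a closed point; if $\dim V \geq 1$ then $V$ lies in a single fibre of $b$, and since the only positive-dimensional fibre is $E_i = b^{-1}(0) \cong \bP^1$, we get $V \subseteq E_i$ and therefore $V = E_i$ by irreducibility and the dimension count. Note that the two-dimensional possibility $V = \component_i$ is excluded precisely because $\component_i$ is not proper.

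Finally I would reassemble the cases. Since $E_i = \cptComponent \cap \component_i \subseteq \cptComponent$, the case $V_k = E_i$ also falls under (1); a closed point $V_k \subseteq \cptComponent$ is likewise a closed subset of $\cptComponent$, i.e.\ case (1); and a closed point $V_k \subseteq \fiber \setminus \cptComponent$ is case (2). As $W$ is the finite union of the $V_k$, the claimed decomposition follows. The main obstacle is the classification in the third paragraph, namely ruling out any complete positive-dimensional subvariety of $\Bl_0\bA^2$ other than the exceptional curve; this is exactly where the non-compactness of $\component_i$ enters, through the properness of the contraction $\component_i \to \bA^2$.
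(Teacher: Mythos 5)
Your proof is correct and follows essentially the same route as the paper: both reduce to classifying the proper closed subsets of each $\component_i \cong \Bl_0\bA^2$ as finite unions of points and the exceptional divisor $E_i \subset \cptComponent$. You merely supply the detail (via properness of the blow-down $\Bl_0\bA^2 \to \bA^2$) that the paper's proof leaves implicit.
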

\begin{proof}
    Since each $\component_i$ (with $i = 1, 2, 3$) is isomorphic to $\Bl_0\bA^2$, its closed subscheme which is proper over the base field must be a finite union of closed points and the exceptional divisor $E_i = \component_i \cap \cptComponent$ of the blow-up.
    Thus, $W$ must be a finite union of closed subschemes of $\cptComponent$ (including the exceptional divisors), and points in $\component_i \setminus \cptComponent$ for $i = 1, 2, 3$.
\end{proof}

\begin{proposition}\label{prop:decomposition-of-compact-support-complex}
    Let $\cD' \subset \fiberDc$ be the full subcategory of complexes supported on $\fiber \setminus \cptComponent$.
    Then the category $\fiberDc$ is decomposed as
    \begin{equation}
        \fiberDc = \fiberDsupp \oplus \cD'.
    \end{equation}
\end{proposition}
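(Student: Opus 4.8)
The plan is to produce, for each object $F \in \fiberDc$, a canonical splitting into a piece supported on $\cptComponent$ and a piece supported away from it, and to verify that objects of these two types are completely orthogonal, so that $\fiberDc$ becomes the orthogonal direct sum of $\fiberDsupp$ and $\cD'$. Note first that both subcategories do sit inside $\fiberDc$: objects of $\fiberDsupp$ have support contained in the proper component $\cptComponent$, and objects of $\cD'$ have proper support by definition.

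First I would establish the orthogonality. Let $j \colon \fiber \setminus \cptComponent \hookrightarrow \fiber$ be the open immersion complementary to $\cptComponentImmersion$, and take $A \in \fiberDsupp$, $B \in \cD'$. By Lemma \ref{lem:decomposition-of-proper-subset}, $\Supp B$ consists of finitely many closed points of $\fiber \setminus \cptComponent$, so $\Supp A \cap \Supp B = \emptyset$. Since $\CExt^\bullet(A, B)$ vanishes on $\fiber \setminus \Supp A$ (where $A = 0$) and on $\fiber \setminus \Supp B$ (where $B = 0$), it is supported on $\Supp A \cap \Supp B = \emptyset$, hence is zero; the same holds for $\CExt^\bullet(B, A)$. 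Applying $R\Gamma$ gives $\RHom(A, B) = 0 = \RHom(B, A)$, and in particular $\Hom(A, B) = \Hom(B, A) = 0$.

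Next I would construct the splitting of an arbitrary $F \in \fiberDc$ from the standard recollement (local cohomology) triangle
\begin{equation}
    \cptComponentImmersion_* \cptComponentImmersion^! F \to F \to j_* j^* F \xrightarrow{+1}.
\end{equation}
The first term is supported on $\cptComponent$ and the third on $\fiber \setminus \cptComponent$. The crucial point is that $j_* j^* F$ lies in $\cD'$: by Lemma \ref{lem:decomposition-of-proper-subset} the cohomology sheaves of $j^* F$ are finite-length sheaves supported at finitely many closed points of $\fiber \setminus \cptComponent$ which are already closed in $\fiber$ and disjoint from $\cptComponent$. Such sheaves are flasque, so their higher direct images under $j$ vanish and $j_* j^* F$ is merely the extension by zero of these skyscrapers, which is bounded with coherent cohomology supported on those points. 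Two-out-of-three then forces $\cptComponentImmersion_* \cptComponentImmersion^! F \in \fiberDc$, and being supported on $\cptComponent$ it lies in $\fiberDsupp$.

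Finally, the connecting morphism $j_* j^* F \to \cptComponentImmersion_* \cptComponentImmersion^! F[1]$ vanishes by the orthogonality above, so the triangle splits and
\begin{equation}
    F \cong \cptComponentImmersion_* \cptComponentImmersion^! F \oplus j_* j^* F \in \fiberDsupp \oplus \cD',
\end{equation}
which, together with the orthogonality, exhibits the decomposition $\fiberDc = \fiberDsupp \oplus \cD'$. I expect the main obstacle to be precisely the verification that $j_* j^* F$ stays in $D^b_c(\fiber)$: a priori, pushforward along an open immersion can destroy both boundedness and coherence, and it is exactly Lemma \ref{lem:decomposition-of-proper-subset} — guaranteeing that the support off $\cptComponent$ collapses to finitely many points that are closed in $\fiber$ and separated from the boundary — that salvages the argument through the flasqueness of finite-length sheaves. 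Everything else is formal once this finiteness is in hand.
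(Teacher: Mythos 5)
Your proof is correct and rests on the same key input as the paper's, namely Lemma \ref{lem:decomposition-of-proper-subset}: the paper simply asserts that the resulting disjoint decomposition of the support yields a canonical splitting of each object, whereas you supply the justification via the local-cohomology triangle, the boundedness and coherence of $j_*j^*F$, and the orthogonality of disjointly supported complexes. Two cosmetic points only: your $\cptComponentImmersion_*\cptComponentImmersion^!$ must be read as $R\Gamma_{\cptComponent}$ (derived sections with set-theoretic support along $\cptComponent$, as your parenthetical indicates, not the Grothendieck-duality shriek of the reduced closed immersion), and your $j$ for the open immersion clashes with the paper's $j = \closedImmersion\circ\cptComponentImmersion$.
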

\begin{proof}
    Let $F \in \fiberDc$ be a complex with proper support.
    By Lemma \ref{lem:decomposition-of-proper-subset}, the support $\Supp(F)$ is a finite union of closed subsets of $\cptComponent$ and closed points in $\fiber \setminus \cptComponent$.
    Therefore, there exists a unique decomposition
    \begin{equation}\label{eq:decomposition-of-compact-support-complex}
        F = F_Z \oplus \bigoplus_{x \in S} F_x
    \end{equation}
    where $F_Z$ is a complex supported on $\cptComponent$, $S$ is a finite set of points in $\fiber \setminus \cptComponent$, and $F_x$ is a complex supported on $x$.
    Since this decomposition is clearly functorial and compatible with exact triangles, the statement follows.
\end{proof}

\begin{remark}\label{rem:decomposition-of-compact-support-complex}
    \begin{enumerate}
        \item The objects of the category $\cD'$ are of the form $\bigoplus_{x \in S} F_x$ where $S$ is a finite set of points in $\fiber \setminus \cptComponent$ and $F_x$ is a complex supported on $x$.
        \item In particular, every indecomposable object of $\fiberDc = \fiberDsupp \oplus \cD'$ is supported on either $\cptComponent$ or a single point $x \in \fiber \setminus \cptComponent$.
    \end{enumerate}
\end{remark}

The sequence of natural functors
\begin{equation}
    D^b(\cptComponent) \xrightarrow{\cptComponentImmersion_*} \fiberDsupp \hookrightarrow \fiberDc \hookrightarrow \fiberD
\end{equation}
induces natural maps
\begin{equation}\label{eq:K-groups}
    K(\cptComponent)=K(D^b(\cptComponent)) \to K(\fiberDsupp) \to K(\fiberDc) \to K(\fiberD).
\end{equation}

\begin{proposition}\label{prop:K-groups}
    The morphisms \eqref{eq:K-groups} induce the following identifications:
    \begin{enumerate}
        \item $K(\fiberDsupp) = K(\cptComponent)$.
        \item $K(\fiberDc) = K(\cptComponent) \oplus \bigoplus_{x \in \fiber \setminus \cptComponent}\bZ[\cO_x]$.

    \end{enumerate}
\end{proposition}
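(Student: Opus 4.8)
The plan is to reduce both identifications to Grothendieck groups of abelian categories and then apply d\'evissage. Throughout I will use the following elementary $K_0$-level fact, which is the shadow of the equivalence $D^b_W(\fiber)\simeq D^b(\Coh_W(\fiber))$ already invoked in Section~\ref{sec:setting}: for a closed subset $W\subseteq\fiber$ the assignment $F\mapsto\sum_i(-1)^i[H^i(F)]$ defines a homomorphism $K(D^b_W(\fiber))\to K(\Coh_W(\fiber))$ inverse to the tautological map (additivity on triangles comes from the long exact cohomology sequence, and $[F]=\sum_i(-1)^i[H^i(F)]$ holds in $K(D^b_W(\fiber))$ for the standard $t$-structure). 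Hence $K(\fiberDsupp)\cong K(\Coh_{\cptComponent}(\fiber))$ and, for each closed point $x$, $K(D^b_x(\fiber))\cong K(\Coh_x(\fiber))$, where $\Coh_x(\fiber)$ denotes coherent sheaves supported set-theoretically at $x$ and $D^b_x(\fiber)$ the complexes with such cohomology.

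For (1) I would work inside $\Coh_{\cptComponent}(\fiber)$ with its full subcategory $\cptComponentImmersion_*\Coh(\cptComponent)$ of sheaves killed by the ideal sheaf $\mathcal{I}=\mathcal{I}_{\cptComponent}$ of $\cptComponent$ in $\fiber$, i.e.\ of honest $\cO_{\cptComponent}$-modules. Given $\cF\in\Coh_{\cptComponent}(\fiber)$, coherence together with $\Supp\cF\subseteq V(\mathcal{I})$ and quasi-compactness of $\fiber$ yield $\mathcal{I}^N\cF=0$ for some $N$, so the $\mathcal{I}$-adic filtration
\begin{equation}
    \cF\supseteq\mathcal{I}\cF\supseteq\cdots\supseteq\mathcal{I}^N\cF=0
\end{equation}
has all subquotients in $\cptComponentImmersion_*\Coh(\cptComponent)$. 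This already gives surjectivity of the map $\cptComponentImmersion_*\colon K(\cptComponent)\to K(\fiberDsupp)$ from \eqref{eq:K-groups}. Since $\Coh(\cptComponent)\hookrightarrow\Coh_{\cptComponent}(\fiber)$ is a fully faithful exact embedding onto a subcategory closed under subobjects and quotients, and every object admits the above finite filtration with subquotients in it, Quillen's d\'evissage theorem promotes this to an isomorphism $K(\cptComponent)\xrightarrow{\sim}K(\fiberDsupp)$.

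For (2) I would feed (1) into the orthogonal decomposition $\fiberDc=\fiberDsupp\oplus\cD'$ of Proposition~\ref{prop:decomposition-of-compact-support-complex}, which splits $K(\fiberDc)\cong K(\fiberDsupp)\oplus K(\cD')$. By Remark~\ref{rem:decomposition-of-compact-support-complex} every object of $\cD'$ is a finite sum of complexes supported at distinct points of $\fiber\setminus\cptComponent$, with no morphisms between different points, so $\cD'=\bigoplus_{x\in\fiber\setminus\cptComponent}D^b_x(\fiber)$ and $K(\cD')\cong\bigoplus_x K(\Coh_x(\fiber))$. For each $x$, an object of $\Coh_x(\fiber)$ is a finite-length $\cO_{\fiber,x}$-module whose unique simple subquotient is $\cO_x$, and additivity of length on short exact sequences gives $K(\Coh_x(\fiber))=\bZ[\cO_x]$ at once. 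Combining this with (1) yields the stated identification.

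I expect the one genuine obstacle to be the injectivity in (1). Surjectivity drops out of the filtration, but the obvious candidate inverse $\cF\mapsto\sum_k[\mathcal{I}^k\cF/\mathcal{I}^{k+1}\cF]$ is not visibly additive on short exact sequences, since the $\mathcal{I}$-adic filtration does not restrict exactly to subobjects (one has only $\mathcal{I}^k\cF'\subseteq\cF'\cap\mathcal{I}^k\cF$ for a subsheaf $\cF'\subseteq\cF$). Overcoming precisely this failure is the content of Quillen's d\'evissage theorem, so I would invoke it rather than build the inverse by hand; everything else in the argument is formal. In contrast, the pointwise computation in (2) needs no d\'evissage, as there the additive invariant is simply the length.
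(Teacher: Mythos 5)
Your argument is correct and follows the same route as the paper: part (1) is exactly the d\'evissage computation of $K(\Coh_{\cptComponent}(\fiber))$ that the paper handles by citing \cite[Example II 6.3.4]{MR3076731}, and part (2) reduces to the splitting $\fiberDc = \fiberDsupp \oplus \cD'$ of Proposition~\ref{prop:decomposition-of-compact-support-complex} and the pointwise length computation, just as in the paper. The only difference is that you spell out the d\'evissage and the $K_0$-level identification $K(D^b_W(\fiber))\cong K(\Coh_W(\fiber))$ rather than quoting them.
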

\begin{proof}
    For (1), see \cite[Example II 6.3.4]{MR3076731}.
    For (2), the decomposition in Proposition \ref{prop:decomposition-of-compact-support-complex} induces a decomposition of the Grothendieck group
    \begin{equation}
        K(\fiberDc) = K(\fiberDsupp) \oplus K(\cD').
    \end{equation}
    The group $K(\fiberDsupp)$ is isomorphic to $K(\cptComponent)$ by (1).
    Since every complex in $\cD'$ is supported on a finite set of points, the group $K(\cD')$ is isomorphic to $\bigoplus_{x \in \fiber \setminus \cptComponent}\bZ[\cO_x]$.
\end{proof}

\begin{proposition}\label{prop:auto-preserves-subcategories}
    Any autoequivalence $\Phi \in \Aut(\fiberD)$ preserves the subcategories $\fiberDc$ and $\fiberDsupp$.
\end{proposition}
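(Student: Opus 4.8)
The plan is to characterise both $\fiberDc$ and $\fiberDsupp$ by intrinsic properties of the triangulated category $\fiberD$, so that any autoequivalence $\Phi$ is forced to preserve them. The first ingredient is that $\Phi$ preserves the subcategory $\Perf(\fiber)$: an object $E$ is perfect precisely when it has finite homological dimension, equivalently when $\bigoplus_{i \in \bZ}\Hom(E, F[i])$ is concentrated in finitely many degrees for every $F \in \fiberD$ (at a singular point $x$ one has $\Ext^i(\cO_x,\cO_x) \neq 0$ for all $i \geq 0$, so non-perfect objects always violate this). As this condition is phrased purely in terms of $\Hom$-spaces, it is $\Phi$-invariant, whence $\Phi(\Perf(\fiber)) = \Perf(\fiber)$.

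Next I would prove $\Phi(\fiberDc) = \fiberDc$ via the characterisation
\begin{equation}
    \fiberDc = \Bigl\{\, F \in \fiberD \;\Bigm|\; \textstyle\sum_{i \in \bZ}\dim_{\bC}\Hom(P, F[i]) < \infty \text{ for all } P \in \Perf(\fiber)\,\Bigr\}.
\end{equation}
Since $\Phi$ preserves both $\Perf(\fiber)$ and the dimensions of $\Hom$-spaces, the right-hand side is $\Phi$-invariant, and the identity gives the claim. The forward inclusion is immediate: for $P$ perfect and $F$ of proper support, $\RHom(P,F) \cong P^\vee \otimes F$ is a bounded coherent complex with $\Supp \subseteq \Supp(F)$ proper, so its hypercohomology $\bigoplus_i \Hom(P,F[i])$ is finite-dimensional. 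For the reverse inclusion one must detect non-proper support intrinsically; here I would pass, through the hypercohomology spectral sequence, to a cohomology sheaf $\cH^q(F)$ whose support, by Lemma \ref{lem:decomposition-of-proper-subset}, meets $\fiber \setminus \cptComponent$ in positive dimension, and then restrict to a non-compact affine chart of some $\component_i$ on which a perfect test complex has infinite-dimensional total $\Hom$ into $F$.

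Finally, for $\fiberDsupp$ I would work with the autoequivalence $\Phi\vert_{\fiberDc}$ of $\fiberDc$ furnished by the previous step, together with the decomposition
\begin{equation}
    \fiberDc = \fiberDsupp \oplus \bigoplus_{x \in \fiber \setminus \cptComponent} \cD'_x
\end{equation}
of Proposition \ref{prop:decomposition-of-compact-support-complex}, refined as in Remark \ref{rem:decomposition-of-compact-support-complex}, where $\cD'_x \subset \cD'$ denotes the subcategory of objects supported at $x$. Objects supported on the disjoint closed sets $\cptComponent$ and $\{x\}$ (resp.\ $\{x\}, \{x'\}$ for $x \neq x'$) admit no morphisms between them, and each factor is indecomposable (its support $\cptComponent \cong \bP^2$, resp.\ the point $x$, being connected); hence this is the canonical decomposition of $\fiberDc$ into indecomposable, mutually $\Hom$-orthogonal blocks. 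Any autoequivalence permutes such blocks, matching them by equivalence. Passing to Grothendieck groups and invoking Proposition \ref{prop:K-groups}, the block $\fiberDsupp$ has $K(\fiberDsupp) \cong K(\cptComponent) \cong \bZ^3$, while each $\cD'_x$ has $K(\cD'_x) \cong \bZ$. Since an equivalence of blocks induces an isomorphism of their Grothendieck groups and $\fiberDsupp$ is the unique block whose Grothendieck group has rank greater than one, $\Phi\vert_{\fiberDc}$ must send $\fiberDsupp$ to itself, giving $\Phi(\fiberDsupp) = \fiberDsupp$.

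The hard part will be the reverse inclusion in the characterisation of $\fiberDc$: converting ``non-proper support'' into a genuinely intrinsic infinite-dimensionality statement is delicate along the singular double curves $Z_{ij}$, where the naive structure-sheaf test objects fail to be perfect, so the localisation argument must be arranged to produce a perfect witness regardless. By contrast, the block-permutation step is routine once one checks that the displayed decomposition is the unique one into indecomposable orthogonal summands.
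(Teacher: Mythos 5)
Your architecture for the first two steps coincides with the paper's: the paper also first shows $\Phi(\Perf(\fiber)) = \Perf(\fiber)$ via the intrinsic characterization of perfect complexes as homologically finite objects, and then characterizes $\fiberDc$ by exactly your displayed condition (local finiteness of $\Hom(-,F)$ on $\Perf(\fiber)$). The difference is that the paper obtains both characterizations by citation, to \cite[Proposition 1.11]{MR2437083} and \cite[Theorem 3.2]{MR2793026} respectively, whereas you set out to prove them. That is where the only real gap sits: your parenthetical justification that non-perfect objects violate homological finiteness only treats skyscrapers at singular points, not arbitrary non-perfect complexes; and the reverse inclusion in your characterization of $\fiberDc$ --- which you rightly flag as the hard part --- is left as a sketch whose delicate cases (a complex whose lowest cohomology sheaf has proper support while a higher one does not, behaviour along the curves $Z_{ij}$) are precisely the content of the cited theorem. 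Unless you intend to reprove these results, cite them; as written, these two steps are not complete proofs.

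For the last step you take a genuinely different route. You invoke uniqueness of the decomposition of $\fiberDc$ into indecomposable, mutually orthogonal blocks and single out $\fiberDsupp$ as the unique block whose Grothendieck group has rank greater than one. The paper instead argues directly on $K$-groups: the objects $\Phi(\cO_{\cptComponent})$, $\Phi(\cO_l)$, $\Phi(\cO_x)$ are indecomposable and connected by nonzero morphisms, hence lie in a common summand of the decomposition of Proposition \ref{prop:decomposition-of-compact-support-complex}, which cannot be a rank-one piece $\bZ[\cO_y]$; this forces $\Phi(K(\fiberDsupp)) = K(\fiberDsupp)$, whence $\Phi(\cD') = \cD'$ and $\Phi(\fiberDsupp) = \Phi({}^\perp\cD') = \fiberDsupp$. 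Both arguments pivot on the same rank dichotomy from Proposition \ref{prop:K-groups}, but yours needs two inputs you only gesture at: that $\fiberDc$ is Krull--Schmidt (true, since it is Hom-finite and idempotent complete, but it should be said), and that $\fiberDsupp$ and each $\cD'_x$ are indecomposable as triangulated categories --- connectedness of the support is not by itself an argument; one should observe, e.g., that every nonzero object of $\fiberDsupp$ receives or emits a nonzero morphism from a shift of some $\cO_x$ with $x \in \cptComponent$, and that all such skyscrapers lie in one block because $\Hom(\cO_{\cptComponent}, \cO_x) \neq 0$. With those points supplied, your block-permutation argument is a valid, and arguably cleaner, alternative to the paper's.
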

\begin{proof}
    The statement follows from the following two lemmas.
\end{proof}

\begin{lemma}\label{lem:auto-preserves-compact-support}
    Any autoequivalence $\Phi \in \Aut(\fiberD)$ preserves the subcategory $\fiberDc$.
\end{lemma}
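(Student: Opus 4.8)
The plan is to characterise $\fiberDc$ inside $\fiberD$ by a finiteness condition on $\Hom$-spaces, which is manifestly invariant under autoequivalences. Precisely, I would establish
\begin{equation}\label{eq:hom-finiteness-criterion}
    E\in\fiberDc \iff \dim_{\bC}\Hom_{\fiberD}(E,F)<\infty\ \text{for every}\ F\in\fiberD.
\end{equation}
Granting \eqref{eq:hom-finiteness-criterion}, the lemma follows immediately: for $\Phi\in\Aut(\fiberD)$ and any $F$ one has $\Hom(\Phi E,F)\cong\Hom(E,\Phi^{-1}F)$, so the right-hand condition is preserved by both $\Phi$ and $\Phi^{-1}$, giving $\Phi(\fiberDc)=\fiberDc$. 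Note that, since $F$ ranges over all of $\fiberD$, the condition in \eqref{eq:hom-finiteness-criterion} is equivalent to finite-dimensionality of every $\Ext^n(E,F)=\Hom(E,F[n])$.

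For the forward implication of \eqref{eq:hom-finiteness-criterion}, suppose $\Supp E$ is proper. For any $F$ the sheaves $\CExt^q(E,F)$ are coherent and supported on $\Supp E\cap\Supp F\subseteq\Supp E$, which is proper; hence each $H^p(\fiber,\CExt^q(E,F))$ is finite-dimensional by finiteness of coherent cohomology on a proper scheme. Since only finitely many pairs $(p,q)$ with $p+q=n$ contribute, the local-to-global spectral sequence $H^p(\fiber,\CExt^q(E,F))\Rightarrow\Ext^{p+q}(E,F)$ shows that every $\Ext^n(E,F)$ is finite-dimensional.

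For the reverse implication I would argue by contraposition. Assume $\Supp E$ is non-proper. By Lemma~\ref{lem:decomposition-of-proper-subset} it then meets $\fiber\setminus\cptComponent$ in a set of dimension at least one, so it contains an irreducible curve $C$ with $C\not\subseteq\cptComponent$; as any complete curve in a component $\component_i\cong\Bl_0\bA^2$ already lies in $\cptComponent$, this $C$ is non-proper. Set $m=\max\{k\mid C\subseteq\Supp\cH^k(E)\}$ and take $F=\cO_C[-m]$; I claim $\Ext^{-m}(E,\cO_C)=\Hom(E,F)$ is infinite-dimensional. At the generic point $\eta$ of $C$ the top cohomology of $E_\eta$ sits in degree $m$, so the lowest nonzero term of $\RHom_{\cO_{\fiber,\eta}}(E_\eta,\kappa(\eta))$ is $\Hom(\cH^m(E)_\eta,\kappa(\eta))\neq 0$; hence $\CExt^{-m}(E,\cO_C)$ has positive rank along $C$, and $H^0(\fiber,\CExt^{-m}(E,\cO_C))$ is infinite-dimensional because a coherent sheaf of positive generic rank on the non-proper irreducible curve $C$ has infinitely many global sections (compare with the pushforward from the normalization, an affine curve). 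Finally, this class survives in the spectral sequence: the corner term $E_2^{0,-m}=H^0(\fiber,\CExt^{-m}(E,\cO_C))$ receives no differentials, since its incoming differentials come from terms with negative $p$; moreover every sheaf $\CExt^q(E,\cO_C)$ with $q<-m$ is supported on the finite set $\bigcup_{k>m}\Supp\cH^k(E)\cap C$, and therefore has vanishing higher cohomology, so the differentials out of $E_2^{0,-m}$ vanish as well. Thus $E_\infty^{0,-m}=E_2^{0,-m}$ is an infinite-dimensional subquotient of $\Ext^{-m}(E,\cO_C)$, proving the claim and hence \eqref{eq:hom-finiteness-criterion}.

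The delicate point is precisely this last step of the reverse direction: an infinite-dimensional space of local homomorphisms supported along $C$ could in principle be annihilated by the differentials of the local-to-global spectral sequence. The device that resolves it is to work in the extremal cohomological degree $m$ and to observe that all lower-degree local $\Ext$-sheaves are supported on finitely many points—hence cohomologically trivial in positive degrees—so that $E_2^{0,-m}$ becomes a genuine corner term untouched by every differential. Everything else reduces to the standard finiteness of coherent cohomology on proper schemes.
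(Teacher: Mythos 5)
Your proof is correct, but it takes a genuinely different route from the paper's. The paper argues in two steps, both by citation: first, $\fiberPerf$ is intrinsically the subcategory of homologically finite objects (Rouquier), so any $\Phi$ preserves it; second, by a theorem of Ballard, $E\in\fiberDc$ if and only if the functor $\Hom(-,E)$ restricted to $\fiberPerf$ is locally finite --- a condition visibly $\Phi$-invariant once $\Phi(\fiberPerf)=\fiberPerf$ is known. You instead characterize $\fiberDc$ by finite-dimensionality of $\Hom(E,F)$ for \emph{all} $F\in\fiberD$; because the test objects range over the entire category, no preliminary statement about perfect complexes is needed. Your forward implication is the standard properness-of-support argument. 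The reverse implication is the only delicate point, and your handling of it is sound: working in the extremal degree $m$ makes $H^0(\fiber,\CExt^{-m}(E,\cO_C))$ a genuine corner term of the local-to-global spectral sequence (incoming differentials vanish for degree reasons, outgoing ones because the sheaves $\CExt^{q}(E,\cO_C)$ with $q<-m$ are supported on the finite set $C\cap\bigcup_{k>m}\Supp\cH^k(E)$), and a coherent sheaf of positive generic rank on the non-proper integral curve $C$ does have infinite-dimensional global sections, since such a curve is affine. What the paper's route buys is brevity and generality (it works verbatim on any quasi-projective scheme); what yours buys is self-containedness --- no appeal to Rouquier or Ballard --- at the cost of a hands-on spectral-sequence argument and a mild use of the geometry of $\fiber$ (via Lemma \ref{lem:decomposition-of-proper-subset}) to locate the non-proper curve $C$ inside a non-proper support.
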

\begin{proof}
    By \cite[Proposition 1.11]{MR2437083} the subcategory $\fiberPerf$ of perfect complexes can be reconstructed as the full-subcategory of \emph{homologically finite objects} \cite[Definition 1.6]{MR2437083} in $\fiberD$ only from its triangulated category structure.
    In particular, $\Phi$ preserves $\fiberPerf$.

    Next, we show that $\Phi$ preserves $\fiberDc$.
    Every complex $E \in D(\QCoh(\fiber))$ defines a cohomological functor
    \begin{equation}
        \Hom(-, E) \colon \fiberPerf^{\op} \to \Vect_{\bC}.
    \end{equation}
    Then \cite[Theorem 3.2]{MR2793026} says that a complex $E \in D(\QCoh(\fiber))$ is contained in $\fiberDc$ if and only if the functor $\Hom(-, E)$ is \emph{locally finite} in the sense of \cite[Definition 3.1]{MR2793026}.
    By using this characterization, for any $E \in \fiberD$ we have
    \begin{align}
             & \Phi(E) \in \fiberDc                                                                                     \\
        \iff & \Hom(-, \Phi(E)) \cong \Hom(\Phi^{-1}(-), E) \colon \fiberPerf \to \Vect_{\bC} \text{ is locally finite} \\
        \iff & \Hom(-, E) \colon \fiberPerf \to \Vect_{\bC}\text{ is locally finite}                                    \\
        \iff & E \in \cD_c.
    \end{align}
    We used $\Phi(\fiberPerf) = \fiberPerf$ in the second equivalence.
\end{proof}

\begin{lemma}
    Any autoequivalence $\Phi \in \Aut(\fiberDc)$ respects the decomposition $\fiberDc = \fiberDsupp \oplus \cD'$ in Proposition \ref{prop:decomposition-of-compact-support-complex}, i.e.~one has $\Phi(\fiberDsupp) = \fiberDsupp$ and $\Phi(\cD') = \cD'$.
\end{lemma}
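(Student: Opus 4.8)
The strategy is to refine the decomposition $\fiberDc = \fiberDsupp \oplus \cD'$ of Proposition~\ref{prop:decomposition-of-compact-support-complex} into its finest orthogonal form and then use that an autoequivalence must permute the resulting indecomposable blocks. By Remark~\ref{rem:decomposition-of-compact-support-complex} one has
\[
    \fiberDc = \fiberDsupp \oplus \bigoplus_{x \in \fiber \setminus \cptComponent} \cD'_x,
\]
where $\cD'_x$ is the full subcategory of complexes supported at the single point $x$; this is an orthogonal decomposition because complexes supported at distinct points have vanishing $\RHom$, and $\Hom(\fiberDsupp, \cD') = \Hom(\cD', \fiberDsupp) = 0$. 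The plan is to show that $\fiberDsupp$ and every $\cD'_x$ are indecomposable, so that this is the decomposition of $\fiberDc$ into indecomposable blocks, and then to single out $\fiberDsupp$ by a Grothendieck-group invariant.

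First I would prove that $\fiberDsupp$ is indecomposable. Suppose $\fiberDsupp = \cA \oplus \cB$. The sheaf $L = \cptComponentImmersion_*\cO_{\cptComponent}$ has $\Hom(L, L) = H^0(\cptComponent, \cO_{\cptComponent}) = \bC$, hence is indecomposable and lies in one factor, say $\cA$. For every $x \in \cptComponent$ the restriction $L \to \cO_x$ is a nonzero morphism, so orthogonality forces every skyscraper $\cO_x$ with $x \in \cptComponent$ to lie in $\cA$. Finally, if $F \in \cB$ is nonzero, choose $x$ in the support of its top nonvanishing cohomology sheaf; the truncation triangle yields a nonzero morphism from $F$ to $\cO_x$ (up to shift) in $\fiberDsupp$, and since $\cO_x \in \cA$ this contradicts $\RHom(\cB, \cA) = 0$ unless $\cB = 0$. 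The same argument, with $\cO_x$ playing the role of the unique simple object of the finite-length category $\cD'_x$, shows that each $\cD'_x$ is indecomposable.

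It is standard that an orthogonal decomposition of a triangulated category into indecomposable full triangulated subcategories is unique---the blocks are the classes of indecomposable objects under the equivalence relation generated by nonzero morphisms---and that any autoequivalence permutes the blocks (this applies to our infinite index set, since every object lies in only finitely many blocks). Thus $\Phi(\fiberDsupp)$ is again one of the blocks. By Proposition~\ref{prop:K-groups} we have $K(\fiberDsupp) = K(\cptComponent) \cong \bZ^3$, while $K(\cD'_x) = \bZ[\cO_x] \cong \bZ$; since $\Phi$ restricts to an equivalence between blocks and hence to an isomorphism of their Grothendieck groups, the rank-three block $\fiberDsupp$ cannot be sent to any rank-one block $\cD'_x$. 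Therefore $\Phi(\fiberDsupp) = \fiberDsupp$, and consequently $\Phi$ permutes the remaining blocks, giving $\Phi(\cD') = \cD'$.

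The main obstacle is the indecomposability of $\fiberDsupp$: one has to exclude any splitting hidden in the singular structure of $\fiber$ along the curves $E_i$, and the evaluation-and-truncation argument above is precisely what reduces this to the connectedness and properness of $\cptComponent = \bP^2$ (through $\Hom(L, L) = \bC$ and the detection of support by skyscrapers). Once indecomposability is secured, the permutation of blocks and the rank comparison of Grothendieck groups are routine.
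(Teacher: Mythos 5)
Your argument is correct, but it takes a more structural route than the paper's. The paper never proves that $\fiberDsupp$ is an indecomposable triangulated category, nor does it invoke uniqueness of block decompositions; instead it works entirely at the level of a few test objects: it shows that $\Phi(\cO_{\cptComponent})$, $\Phi(\cO_l)$, $\Phi(\cO_x)$ are indecomposable and linked by nonzero morphisms, so by the support dichotomy for indecomposables (Remark on the decomposition) they all land in $\fiberDsupp$ or all at a single outside point, the latter being impossible because a rank-three sublattice cannot sit inside $\bZ[\cO_y]$; this gives $\Phi(K(\fiberDsupp)) = K(\fiberDsupp)$, then $\Phi(\cD') \subset \cD'$ follows by applying the same K-theoretic obstruction to $\Phi(\cO_y)$, and finally $\Phi(\fiberDsupp) = \fiberDsupp$ is deduced as the perpendicular ${}^\perp\cD'$ --- the reverse order from yours. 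Your proof buys a cleaner conceptual picture (the finest block decomposition of $\fiberDc$, with blocks permuted by any autoequivalence and $\fiberDsupp$ singled out by the rank of its Grothendieck group), at the cost of two extra inputs you should make explicit: (i) the ``standard'' uniqueness of the decomposition into indecomposable blocks, via the equivalence relation generated by nonzero (graded) morphisms, requires that every object of $\fiberDc$ is a finite direct sum of indecomposables --- this holds here because $\fiberDc$ is idempotent complete (a thick subcategory of the idempotent-complete $D^b(\fiber)$) with finite-dimensional Hom-spaces, hence Krull--Schmidt, but it is not automatic for an arbitrary triangulated category; and (ii) the equivalence relation must allow morphisms up to shift, or $M$ and $M[1]$ could a priori fall into different classes. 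Your indecomposability argument for $\fiberDsupp$ (evaluation $\cptComponentImmersion_*\cO_{\cptComponent} \to \cO_x$ plus detection of support by the top cohomology sheaf and Nakayama) is sound and is essentially a globalized version of the morphism-chasing the paper does for its three generators. Both proofs ultimately rest on the same two pillars: the support description of indecomposables in $\fiberDc$ and the rank comparison $K(\fiberDsupp) \cong \bZ^3$ versus $\bZ[\cO_y] \cong \bZ$.
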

\begin{proof}
    We first show that the induced automorphism $\Phi \colon K(\fiberDc) \to K(\fiberDc)$ preserves the subgroup $K(\fiberDsupp)$.
    The group $K(\fiberDsupp) = K(\cptComponent)$ is generated by $[\cO_{\cptComponent}]$, $[\cO_l]$, and $[\cO_x]$, where $l \subset \cptComponent$ is a line and $x \in \cptComponent$ is a point.
    The objects $\Phi(\cO_{\cptComponent})$, $\Phi(\cO_l)$, and $\Phi(\cO_x)$ are indecomposable and there are non-zero morphisms $\Phi(\cO_{\cptComponent}) \to \Phi(\cO_l)$ and $\Phi(\cO_{\cptComponent}) \to \Phi(\cO_x)$.
    Therefore, Proposition \ref{prop:decomposition-of-compact-support-complex} and Remark \ref{rem:decomposition-of-compact-support-complex} (2) imply that there are only two possibilities:
    \begin{enumerate}
        \item $\Phi(\cO_{\cptComponent}), \Phi(\cO_l), \Phi(\cO_x)$ are contained in $\fiberDsupp$, or
        \item $\Phi(\cO_{\cptComponent}), \Phi(\cO_l), \Phi(\cO_x)$ are supported on a common point $y \in \fiber \setminus \cptComponent$.
    \end{enumerate}
    The second case does not occur; otherwise, the rank three lattice $\Phi(K(\fiberDsupp))$ would be contained in $\bZ[\cO_y]$.
    This shows $\Phi(K(\fiberDsupp)) = K(\fiberDsupp)$.

    Next, we show $\Phi(\cD') \subset \cD'$.
    If not, there exists a point $y \in \fiber \setminus \cptComponent$ with $\Phi(\cO_y) \notin \cD'$.
    Since $\Phi(\cO_y)$ is indecomposable, we have $\Phi(\cO_y) \in \fiberDsupp$.
    Passing to the Grothendieck group, we have $0 \neq [\cO_y] = [\Phi^{-1}(\Phi(\cO_y))] \in \Phi^{-1}(K(\fiberDsupp)) = K(\fiberDsupp)$, which is a contradiction.
    Therefore, the autoequivalence $\Phi$ preserves the subcategory $\cD'$.
    It also preserves the perpendicular category $\fiberDsupp = {}^\perp\cD'$.
\end{proof}

\begin{corollary}\label{cor:outside-skyscraper-sheaf}
    For any autoequivalence $\Phi \in \Aut(\fiberDc)$ and $x \in \fiber \setminus \cptComponent$, the image $\Phi(\cO_x)$ of the skyscraper sheaf $\cO_x$ is of the form $\cO_y[n]$ for some point $y \in \fiber \setminus \cptComponent$ and integer $n \in \bZ$.
\end{corollary}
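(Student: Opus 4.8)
The plan is to reduce the statement to a purely local assertion about the residue field of a commutative local ring, and then to pin down $\cO_x$ up to shift by properties that are manifestly invariant under any triangulated equivalence.

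First I would invoke the preceding lemma, which gives $\Phi(\cD') = \cD'$. By Remark~\ref{rem:decomposition-of-compact-support-complex}, every object of $\cD'$ is a finite direct sum of complexes supported at single points of $\fiber \setminus \cptComponent$, and for distinct closed points $x \neq x'$ one has $\Hom(F_x, F_{x'}[i]) = 0$ for all $i$ because the supports are disjoint. Thus $\cD'$ splits as an orthogonal sum $\cD' = \bigoplus_{x} \cD_x$ over its blocks $\cD_x$ of objects supported at $x$. Since $\cO_x$ is indecomposable, its image lies in $\cD'$ and, again by Remark~\ref{rem:decomposition-of-compact-support-complex}, is supported at a single point $y \in \fiber \setminus \cptComponent$, so $\Phi(\cO_x) \in \cD_y$. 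As $\cO_x$ generates $\cD_x$ as a thick subcategory (every finite-length module is a $\cO_x$-iterated extension, and every object is built from its cohomology), $\Phi$ restricts to an equivalence $\cD_x \xrightarrow{\sim} \cD_y$. It therefore suffices to show that such an equivalence sends $\cO_x$ to $\cO_y[n]$.

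The second step is the key local input. Identify $\cD_x \simeq D^b_{\mathrm{fl}}(\widehat{\cO}_{\fiber,x})$, the category of complexes with finite-length cohomology over the completed local ring, which is \emph{commutative} at every point of $\fiber \setminus \cptComponent$ — whether $x$ is a smooth point of $\fiber$ or a normal-crossing point lying on some $Z_{ij}$. Over a commutative local ring $(R,\fm,k)$ the residue field is the unique finite-length brick: if $M$ is finite length with $\Hom_R(M,M)=k$, then the image of the structure map $R \to \Hom_R(M,M)=k$ is $R/\mathrm{Ann}(M)$, a local ring injecting into the field $k$, which forces $\fm M = 0$; then $M$ is a $k$-vector space with $\Hom_k(M,M)=k$, so $\dim_k M = 1$ and $M \cong k$. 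Since the equivalence preserves $\Hom(\cO_x, \cO_x[i])$ for all $i$, the object $\Phi(\cO_x)$ is an indecomposable brick in $\cD_y$ with $\Hom(\Phi\cO_x, \Phi\cO_x[i]) = 0$ for $i < 0$; the displayed computation identifies $\cO_y$ as the unique such object \emph{among modules} (i.e.\ among objects concentrated in a single cohomological degree), uniformly at smooth and singular points.

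The main obstacle is the final upgrade from modules to complexes: I must show that any $F \in \cD_y$ which is a brick with $\Hom(F,F[i]) = 0$ for $i<0$ is concentrated, up to shift, in a single cohomological degree. Writing $[a,b]$ for the amplitude of $F$, the idea is to feed the bottom and top cohomology through a simple submodule of $H^a(F)$ and a simple quotient of $H^b(F)$ to produce nonzero morphisms $\cO_y \to F$ and $F \to \cO_y[-b]$; when $a < b$ these, together with the truncation triangles, should be leveraged to contradict either indecomposability or the vanishing $\Hom^{<0}(F,F)=0$, forcing $a = b$ and hence $F \cong \cO_y[n]$ by the module statement. Equivalently, one may try to prove directly that the equivalence $\cD_x \simeq \cD_y$ carries the standard length heart to a shift of the standard length heart, so that the unique simple object $\cO_x$ goes to $\cO_y[n]$. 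This reduction is delicate precisely at the normal-crossing points, where $\cD_y$ carries no Serre functor and $\cO_y$ has infinite-dimensional self-extension algebra, so the truncation argument must be carried out without recourse to duality.
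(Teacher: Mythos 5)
Your route has the same skeleton as the paper's: both arguments first use the preceding lemma and the indecomposability of $\cO_x$ to place $\Phi(\cO_x)$ in a single block of $\cD'$ supported at one point $y \in \fiber \setminus \cptComponent$, and both then reduce to showing that a brick $F$ supported at $y$ with $\Hom(F,F[i])=0$ for $i<0$ is a shifted skyscraper. The paper disposes of this last step by citing ``the argument in [Lemma 4.5]'' of an external reference, whereas you try to prove it from scratch. Your module-level input --- over any commutative local ring the residue field is the unique finite-length brick --- is correct and is exactly the right local statement; it applies uniformly at smooth and normal-crossing points of $\fiber \setminus \cptComponent$.

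The genuine gap is the step you yourself flag as the main obstacle: the reduction from complexes to modules, i.e.\ showing that $F$ has cohomological amplitude one. As written you only outline a plan (``should be leveraged to contradict\dots''), and you suggest the step may fail at normal-crossing points for lack of a Serre functor. In fact no duality is needed, and this is precisely what the paper's citation supplies. Concretely, let $a<b$ be the minimal and maximal degrees with $H^a(F), H^b(F) \neq 0$; since both are nonzero finite-length modules over the same local ring, composing a simple quotient of $H^b(F)$ with a simple submodule of $H^a(F)$ gives a nonzero map $\phi \colon H^b(F) \to H^a(F)$, and the composite
\begin{equation}
    F \longrightarrow \tau^{\geq b}F = H^b(F)[-b] \xrightarrow{\ \phi[-b]\ } H^a(F)[-b] = \bigl(\tau^{\leq a}F\bigr)[a-b] \longrightarrow F[a-b]
\end{equation}
induces $\phi$ on $H^b$, hence is a nonzero element of $\Hom(F,F[a-b])$ with $a-b<0$, contradicting the vanishing of negative self-extensions. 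So the truncation argument you gesture at does close, but your proposal does not actually carry it out, and the appeal to the absence of Serre duality (and to the infinite-dimensionality of $\Ext^*(\cO_y,\cO_y)$ at singular points, which is irrelevant since only $\Ext^{\leq 0}$ enters) is a red herring rather than an obstruction.
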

\begin{proof}
    By the indecomposability of $\cO_x$ and the previous lemma, the image $\Phi(\cO_x)$ is supported on a single point $y \in \fiber \setminus \cptComponent$.
    It also satisfies the following condition:
    \begin{equation}
        \Ext^i(\Phi(\cO_x), \Phi(\cO_x)) = \begin{cases}
            \bC & i = 0, \\
            0   & i < 0
        \end{cases}
    \end{equation}
    Then the argument in \cite[Lemma 4.5]{MR2244106} shows the claim.
\end{proof}

\section{Stability conditions}\label{sec:stability-conditions}
In this section, we study the space of Bridgeland stability conditions on the category $\fiberDsupp$ by comparing it with that on $D^b_{\cptComponent}(\totalSpace)$.
For simplicity, we denote the category $D^b_{\cptComponent}(\totalSpace)$ by $\simpleTotalDsupp$.
We say that a vector bundle $E$ on $\cptComponent$ is \emph{exceptional} if $\Ext^i_{\cptComponent}(E, E) = \bC$ for $i = 0$ and $0$ for $i \neq 0$.
If $E$ is exceptional, then the sheaf $\closedImmersion_*\cptComponentImmersion_*E$ on $\totalSpace$ is a spherical object in $\simpleTotalDsupp$ \cite[Proposition 3.15]{MR1831820} and hence $\cptComponentImmersion_*E$ is a half-spherical object in $\fiberDsupp$.
By abuse of notation, we denote the associated spherical twist by $\sphericalTwistE \colon \simpleTotalDsupp \to \simpleTotalDsupp$ and the half-spherical twist by $\halfTwistE \colon \fiberDsupp \to \fiberDsupp$.

\subsection{Comparison isomorphism of spaces of stability conditions}\label{subsec:comparison-isomorphism}
\begin{lemma}
    For any stability condition $\sigma = (Z, \cP)$ on $\simpleTotalDsupp$, the following construction gives a stability condition $\comparisonIso(\sigma) = (Z', \cP')$ on $\fiberDsupp$:
    \begin{enumerate}
        \item $Z' = Z \circ \closedImmersion_*$,
        \item $\cP'(\phi) = \{E \in \fiberDsupp \mid \closedImmersion_* E \in \cP(\phi)\}$.
    \end{enumerate}
\end{lemma}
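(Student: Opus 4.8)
The plan is to verify directly that $\comparisonIso(\sigma) = (Z', \cP')$ satisfies the axioms of a (support-property) stability condition on $\fiberDsupp$, exploiting that $\closedImmersion \colon \fiber \hookrightarrow \totalSpace$ is the inclusion of the principal Cartier divisor $\fiber = \projectionMap^{-1}(0) = \{t = 0\}$, where $t = \projectionMap$, with trivial normal bundle. Two structural properties of $\closedImmersion_*$ do most of the elementary work. First, $\closedImmersion_*$ is exact and faithful (pushforward along a closed immersion), hence conservative, so the natural map $\Hom_\fiber(A, B) \hookrightarrow \Hom_\totalSpace(\closedImmersion_* A, \closedImmersion_* B)$ is injective for all $A, B$. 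Second, triviality of the conormal bundle gives $\closedImmersion^* \closedImmersion_* A \cong A \oplus A[1]$, so that by adjunction
\begin{equation}
    \Hom_\totalSpace(\closedImmersion_* A, \closedImmersion_* B) \cong \Hom_\fiber(A, B) \oplus \Ext^{-1}_\fiber(A, B);
\end{equation}
in particular $\closedImmersion_*$ is fully faithful whenever $\Ext^{-1}_\fiber(A, B) = 0$, which (by faithfulness and the fact that $\cA = \cP((0,1])$ is a heart) holds as soon as $\closedImmersion_* A, \closedImmersion_* B \in \cA$.

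The formal axioms are then immediate. The identity $\cP'(\phi + 1) = \cP'(\phi)[1]$ holds because $\closedImmersion_*$ commutes with $[1]$ and $\cP$ satisfies it. Orthogonality follows from faithfulness: for $E \in \cP'(\phi)$ and $F \in \cP'(\phi')$ with $\phi > \phi'$, one has $\Hom_\fiber(E, F) \hookrightarrow \Hom_\totalSpace(\closedImmersion_* E, \closedImmersion_* F) = 0$. The central-charge compatibility $Z'(\cP'(\phi) \setminus \{0\}) \subset \bR_{>0} e^{\sqrt{-1}\pi\phi}$ holds since $\closedImmersion_*$ is conservative, so $0 \neq E \in \cP'(\phi)$ forces $0 \neq \closedImmersion_* E \in \cP(\phi)$ and $Z'(E) = Z(\closedImmersion_* E)$; moreover $Z'$ factors through $K(\fiberDsupp)$, which is the finite lattice $K(\cptComponent) \cong \bZ^3$ by Proposition \ref{prop:K-groups}.

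The essential content is the existence of Harder--Narasimhan filtrations, which I would organize through the heart $\cA' = \cP'((0,1]) = \{E \in \fiberDsupp \mid \closedImmersion_* E \in \cA\}$. I would show that $\cA'$ is the heart of a bounded t-structure on $\fiberDsupp$ and that $Z'$ restricts to a stability function on it with the HN property; the slicing $\cP'$ is then recovered in the standard way. The key mechanism is that multiplication by the global function $t$ defines a natural transformation $\mathsf{t} \colon \id_{\simpleTotalDsupp} \Rightarrow \id_{\simpleTotalDsupp}$ that vanishes on every object in the image of $\closedImmersion_*$. Because the cohomology functors and truncations attached to $\cA$ are functorial, the zero endomorphism $\mathsf{t}$ of $\closedImmersion_* E$ induces the zero endomorphism on each $\cA$-cohomology object, so these cohomologies are again annihilated by $\mathsf{t}$. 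Using the comparison technique of \cite{MR2448280} (equivalently, the spherical-functor structure of $\closedImmersion_*$ coming from the trivial normal bundle) one identifies the $\mathsf{t}$-torsion objects with the essential image of $\closedImmersion_*$, so each $\cA$-cohomology of $\closedImmersion_* E$ has the form $\closedImmersion_* C^j$ with $C^j \in \cA'$. Full faithfulness of $\closedImmersion_*$ on $\cA$ then lets one descend the Postnikov filtration reconstructing $\closedImmersion_* E$ from its cohomologies to a filtration of $E$ with factors in the $\cA'[n_j]$, yielding the t-structure; the HN property for $(\cA', Z')$ follows from that for $(\cA, Z)$ by the same descent.

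I expect the main obstacle to be exactly this descent. Since $\closedImmersion_*$ is \emph{not} fully faithful on all of $\fiberDsupp$ — the summand $\Ext^{-1}_\fiber(A, B)$ obstructs lifting arbitrary morphisms between images — one cannot naively transport the filtration of $\closedImmersion_* E$ back to $\fiber$. The two inputs that make the descent work, and that require genuine argument, are (i) the identification of the $\mathsf{t}$-torsion subcategory with the essential image of $\closedImmersion_*$, and (ii) the restoration of full faithfulness on hearts, so that the filtration maps, being morphisms \emph{within} $\cA$ between $\mathsf{t}$-torsion objects, do descend to $\fiber$. The support property is comparatively routine: a $\comparisonIso(\sigma)$-semistable $E$ has $\closedImmersion_* E$ $\sigma$-semistable of the same phase with $|Z'(E)| = |Z(\closedImmersion_* E)|$, and the numerical classes are related by the linear map $\closedImmersion_* \colon K(\fiberDsupp) \to \Lambda$, so the support inequality for $\sigma$ yields one for $\comparisonIso(\sigma)$ after comparing norms on these finite-rank lattices.
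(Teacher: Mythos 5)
Your outline follows, in substance, the same route as the paper, but where you reconstruct the argument by hand the paper simply cites a general result: its entire proof is to apply \cite[Corollary 2.2.2]{MR2324559} (inducing a t-structure, and hence the stability condition, along the pushforward from a principal Cartier divisor) and to verify the single hypothesis of that result, namely that $-\otimes_{\totalSpace}\closedImmersion_*\cO_{\fiber}$ is right t-exact, which is immediate from the two-term presentation $\closedImmersion_*\cO_{\fiber}\cong(\cO_{\totalSpace}\xrightarrow{t}\cO_{\totalSpace})$. Your verification of the formal axioms is fine, and your ``key mechanism'' ($t$ acts as zero on the image of $\closedImmersion_*$, whence the splitting $\closedImmersion^*\closedImmersion_*A\cong A\oplus A[1]$ and the resulting $\Hom$-decomposition) is exactly the input to the cited proof. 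Two caveats. First, the opening justification of faithfulness (``exact and faithful, hence conservative, so injective on Homs'') is a non sequitur: conservativity of a triangulated functor does not give injectivity on morphism spaces; what does is the direct-sum decomposition you state immediately afterwards. Second, and more seriously, the pivotal step ``one identifies the $\mathsf{t}$-torsion objects with the essential image of $\closedImmersion_*$'' is asserted rather than proved, and as a statement about arbitrary objects of $\simpleTotalDsupp$ it is delicate: the essential image of $\closedImmersion_*$ is not a triangulated subcategory and is not obviously closed under the direct summands and extensions that your descent of the Postnikov filtration requires. The arguments in \cite{MR2324559} and \cite{MR2448280} do not characterize that image globally; they work object-by-object inside the heart, using the splitting above together with an induction on the length of the truncation. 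So your proposal correctly isolates where the difficulty lives but does not discharge it: read as a self-contained proof it has a gap precisely at the descent step, while read as a reduction to the literature it lands on the same results the paper invokes in one line.
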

\begin{proof}
    We can apply \cite[Corollary 2.2.2]{MR2324559} to our supported derived categories.
    The assumption that the functor $-\otimes_{\totalSpace}\closedImmersion_*\cO_{\fiber} \colon \simpleTotalDsupp \to \simpleTotalDsupp$ is right t-exact is satisfied since
    $\closedImmersion_*\cO_{\fiber} \cong (0 \to \cO_{\totalSpace} \to \cO_{\totalSpace} \to 0)$ in $D^b(\totalSpace)$.
\end{proof}
\begin{remark}
    In the above lemma, we also have $\cP'(I) = \{E \in \fiberDsupp \mid \closedImmersion_* E \in \cP(I)\}$ for any interval $I \subset \bR$.
    See also \cite[Proposition 2.2.1]{MR2324559}.
\end{remark}
\begin{lemma}[{\cite[Lemma 5.3]{MR2448280}}]\label{lem:lemma-on-inducing-stability-condition}
    Let $\sigma = (Z, \cP)$ be a stability condition on $\simpleTotalDsupp$ and $\theta(\sigma) = (Z', \cP')$ be the induced stability condition on $\fiberDsupp$.
    Denote by $\cA = \cP((0, 1])$ and $\cA' = \cP'((0, 1])$ the hearts of $\sigma$ and $\theta(\sigma)$ respectively.
    Then the following hold:
    \begin{enumerate}
        \item $\closedImmersion_* \colon \cA' \to \cA$ is fully faithful.
        \item $\closedImmersion_*\cP'_{s}(\phi) = \cP_{s}(\phi)$ for all $\phi \in \bR$, where $\cP_s(\phi)$ (resp.~$\cP'_s(\phi)$) is the subcategory of $\sigma$-stable (resp.~$\theta(\sigma)$-stable) objects of phase $\phi$.
        \item For any $E \in \fiberDsupp$, we have $\phi_{\sigma}^{\pm}(\closedImmersion_*E) = \phi_{\theta(\sigma)}^{\pm}(E)$.
        \item We have \begin{align}
                   & \sup\left\{\frac{\|E\|}{|Z(E)|}\mid E \in \simpleTotalDsupp \colon \sigma \text{-semistable}\right\}      \\
                   & = \sup\left\{\frac{\|E\|}{|Z'(E)|}\mid E \in \fiberDsupp \colon \theta(\sigma) \text{-semistable}\right\}
              \end{align}
              for any norm $\|\cdot\|$ on $K(\simpleTotalDsupp) \cong K(\fiberDsupp)$.
              In particular, $\sigma$ satisfies the support property if and only if $\theta(\sigma)$ does.
    \end{enumerate}
\end{lemma}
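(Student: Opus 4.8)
The plan is to derive all four statements from a single self-intersection computation. Since $\fiber = \projectionMap^{-1}(0)$ is a fibre of $\projectionMap$, it is a Cartier divisor in $\totalSpace$ with trivial normal bundle, cut out by the pullback $t \in \Gamma(\totalSpace,\cO_{\totalSpace})$ of the coordinate on $\baseSpace$. Tensoring the Koszul resolution $\cO_{\totalSpace}\xrightarrow{t}\cO_{\totalSpace}\to\closedImmersion_*\cO_{\fiber}$ with $G\in\simpleTotalDsupp$ yields a functorial triangle
\begin{equation}
    G \xrightarrow{\,\cdot t\,} G \to \closedImmersion_*\closedImmersion^* G \xrightarrow{+1},
\end{equation}
and for $E\in\fiberDsupp$ one gets $\closedImmersion^*\closedImmersion_* E\cong E\oplus E[1]$ and $\closedImmersion^!\closedImmersion_* E\cong E\oplus E[-1]$, with the counit $\closedImmersion^*\closedImmersion_* E\to E$ equal to the projection onto the first summand. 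I will also use freely that $\closedImmersion_*$ is exact and faithful (it is retracted by $\closedImmersion^*$), that $\closedImmersion_*\cP'(\phi)\subseteq\cP(\phi)$ and $\closedImmersion_*\cP'(I)\subseteq\cP(I)$ by the definition of $\comparisonIso(\sigma)$, and that $t$ acts nilpotently on every object of $\simpleTotalDsupp$, these being supported on $\cptComponent\subset\{t=0\}$.

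Parts (1) and (3) are then immediate. For (1), adjunction and the self-intersection formula give
\begin{equation}
    \Hom_{\totalSpace}(\closedImmersion_* E,\closedImmersion_* F)\cong\Hom_{\fiber}(\closedImmersion^*\closedImmersion_* E,F)\cong\Hom_{\fiber}(E,F)\oplus\Ext^{-1}_{\fiber}(E,F),
\end{equation}
and tracing the counit identifies the map induced by $\closedImmersion_*$ with the inclusion of the first summand. For $E,F$ in the same phase (in particular in $\cA'$) one has $\Ext^{-1}_{\fiber}(E,F)=0$, so $\closedImmersion_*\colon\cP'(\phi)\to\cP(\phi)$ is fully faithful, giving (1). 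For (3), apply the exact functor $\closedImmersion_*$ to the HN filtration of $E$ for $\comparisonIso(\sigma)$: the images of the factors are nonzero (faithfulness), lie in the $\cP(\phi_i)$, and have strictly decreasing phases, so by uniqueness this is the HN filtration of $\closedImmersion_* E$; reading off the extreme phases gives $\phi^{\pm}_{\sigma}(\closedImmersion_* E)=\phi^{\pm}_{\comparisonIso(\sigma)}(E)$.

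Part (2) is the crux. Let $S\in\cP_s(\phi)$. Since $\Hom_{\totalSpace}(S,S)\cong\bC$, the nilpotent endomorphism $t\cdot\id_S$ must vanish, so the triangle above splits to $\closedImmersion_*\closedImmersion^* S\cong S\oplus S[1]$; by (3) the top HN factor $T$ of $\closedImmersion^* S$ satisfies $\closedImmersion_* T\cong S[1]$, whence $S\cong\closedImmersion_* E_0$ with $E_0:=T[-1]\in\cP'(\phi)$, and $E_0$ is stable because a proper subobject would push forward to one of $S$. This gives $\cP_s(\phi)\subseteq\closedImmersion_*\cP'_s(\phi)$. Conversely, let $E_0\in\cP'_s(\phi)$, so $\closedImmersion_* E_0\in\cP(\phi)$ is killed by $t$. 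Any destabilizing sequence $0\to A\to\closedImmersion_* E_0\to B\to0$ in $\cP(\phi)$ has $A,B$ again killed by $t$ (multiplication by $t$ is central, hence compatible with the mono and the epi) and semistable of phase $\phi$, so each descends by the same splitting-plus-(3) argument: $A\cong\closedImmersion_* A'$, $B\cong\closedImmersion_* B'$ with $A',B'\in\cP'(\phi)$. Full faithfulness and exactness of $\closedImmersion_*$ on $\cP'(\phi)$ then yield $0\to A'\to E_0\to B'\to0$ with $A',B'\neq0$, contradicting the stability of $E_0$; thus $\closedImmersion_* E_0$ is stable, giving the other inclusion and (2).

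Finally, (4) reduces to stable objects. In each $\cP(\phi)$ the central charges of the Jordan--Hölder factors $S_i$ lie on a common ray, so $|Z|$ is additive and the triangle inequality gives $\|G\|/|Z(G)|\le\max_i\|S_i\|/|Z(S_i)|$; hence the support-property supremum over semistable objects equals that over stable objects, on both $\simpleTotalDsupp$ and $\fiberDsupp$. Under the identification $K(\fiberDsupp)\cong K(\simpleTotalDsupp)$ by $\closedImmersion_*$ one has $Z\circ\closedImmersion_*=Z'$ and $\|\closedImmersion_* E\|=\|E\|$, and by (2) the functor $\closedImmersion_*$ is a bijection between stable objects of each phase; so the two suprema coincide and $\sigma$ satisfies the support property iff $\comparisonIso(\sigma)$ does. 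I expect the main obstacle to be the descent step inside (2) — realizing a stable (or $t$-trivial semistable) object of $\simpleTotalDsupp$ as a genuine pushforward $\closedImmersion_*E'$ — which is precisely where the self-intersection splitting and the uniqueness of HN filtrations must be combined.
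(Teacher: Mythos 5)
The paper does not prove this lemma itself --- it is quoted from Toda's comparison result \cite[Lemma 5.3]{MR2448280} --- so your proposal has to be judged against that argument, and for parts (1)--(3) it is a faithful and complete reconstruction of the standard route: the Koszul triangle $G \xrightarrow{t} G \to \closedImmersion_*\closedImmersion^*G$, the vanishing of $t$ on stable objects via $\Hom(S,S)=\bC$, and the descent of $t$-annihilated semistable objects by matching HN filtrations of $\closedImmersion^*S$ against those of $S\oplus S[1]$. Two cosmetic remarks: in (1) the relevant hypothesis is simply that $E,F$ lie in a common heart (so $\Ext^{<0}(E,F)=0$), not that they have the same phase; and in the converse direction of (2) the cone of the descended map $A'\to E_0$ is a priori some $C\in\cP'(\phi)$ with $\closedImmersion_*C\cong B$ rather than $B'$ itself --- but a nonzero strict subobject $A'$ with nonzero cokernel in $\cP'(\phi)$ is all the contradiction requires, so nothing is lost.

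The one genuine gap is in part (4). Your reduction of the supremum over semistable objects to the supremum over stable objects presupposes that every semistable object admits a finite Jordan--H\"older filtration, i.e.\ that the stability condition is locally finite. That is exactly what the support property is supposed to deliver, so the argument becomes circular in one direction of the ``in particular'': to show that $\sigma$ satisfies the support property whenever $\comparisonIso(\sigma)$ does, you would need JH filtrations for $\sigma$, which you do not yet have. (The direction actually used to define $\comparisonIso$ on $\Stab(\simpleTotalDsupp)$ is unaffected, since there $\sigma$ is assumed to satisfy the support property.) The unconditional statement is rescued by replacing JH filtrations with a filtration you already have for free: a $\sigma$-semistable $G$ of phase $\phi$ is killed by $t^N$, and the chain $G\supseteq tG\supseteq\cdots\supseteq t^NG=0$ of images inside the abelian heart $\cP((\phi-1,\phi])$ has graded pieces that are again semistable of phase $\phi$ (sub- and quotient objects of $G$ at the top phase of that heart) and are annihilated by $t$, hence descend to $\comparisonIso(\sigma)$-semistable objects by your splitting-plus-(3) argument. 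Applying the same mediant inequality to this filtration gives $\|v(G)\|/|Z(G)|\le\sup_{\fiberDsupp}$ directly, with no appeal to stable factors, and together with the easy pushforward inequality yields the stated equality of suprema in full generality.
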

These two lemmas show that $\theta$ defines a map from $\Stab(\simpleTotalDsupp)$ to $\Stab(\fiberDsupp)$ which makes the following diagram commute:
\begin{equation}
    \begin{tikzcd}
        \Stab(\simpleTotalDsupp) \ar[r, "\comparisonIso"] \ar[d, "\cZ"'] & \Stab(\fiberDsupp) \ar[d, "\cZ"] \\
        \Hom(K(\simpleTotalDsupp), \bC) \ar[r, "- \circ \closedImmersion_*"] & \Hom(K(\fiberDsupp), \bC).
    \end{tikzcd}
\end{equation}
Notice that the map $\comparisonIso$ commutes with the natural $\bC$-actions on both sides.
By the same argument as in \cite[Corollary 5.4, Theorem 5.5]{MR2448280}, it induces the following comparison isomorphism of the spaces of stability conditions.
\begin{theorem}\label{thm:comparison-isomorphism}
    The map $\theta$ is continuous.
    Moreover, let $\Stab^{\circ}(\simpleTotalDsupp)$ be a connected component of $\Stab(\simpleTotalDsupp)$ and $\Stab^{\circ}(\fiberDsupp)$ be the connected component of $\Stab(\fiberDsupp)$ containing $\theta(\Stab^{\circ}(\simpleTotalDsupp))$.
    Then the map $\comparisonIso \colon \Stab^{\circ}(\simpleTotalDsupp) \to \Stab^{\circ}(\fiberDsupp)$ is an isomorphism of complex manifolds.
\end{theorem}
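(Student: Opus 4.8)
The plan is to reproduce the argument of \cite[Corollary 5.4 and Theorem 5.5]{MR2448280} in the present setting, exploiting the commutative square relating $\comparisonIso$ to the two forgetful maps $\cZ$. The strategy has four stages: (i) continuity of $\comparisonIso$, refined to the statement that it is a local isometry for the generalized metrics; (ii) the local-homeomorphism property; (iii) global injectivity; and (iv) the upgrade to an isomorphism onto the connected component. Stages (i)--(iii) are essentially formal consequences of Lemma~\ref{lem:lemma-on-inducing-stability-condition} and Bridgeland's deformation theorem, while stage (iv) carries the real content.

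For continuity I would compare the generalized metrics directly. By Lemma~\ref{lem:lemma-on-inducing-stability-condition}(2) the $\sigma$-stable objects of $\simpleTotalDsupp$ are exactly the images $\closedImmersion_* G$ of $\comparisonIso(\sigma)$-stable objects $G$ of the same phase, and by the interval version of the construction noted after the lemma one has $\closedImmersion_* G \in \cP(I)$ if and only if $G \in \cP'(I)$. Since each $\cP(\phi)$ is the extension closure of its stable objects (using the support property) and each $\cP(I)$ is extension-closed, the slicing inclusions $\cP_1(\phi) \subseteq \cP_2(I)$ that govern the metric $d$ are equivalent to the corresponding inclusions $\cP'_1(\phi) \subseteq \cP'_2(I)$ on the fiber side; together with the equality of masses $m_{\sigma}(\closedImmersion_* E) = m_{\comparisonIso(\sigma)}(E)$ that follows from the correspondence of Harder--Narasimhan factors in Lemma~\ref{lem:lemma-on-inducing-stability-condition}, this yields $d(\comparisonIso(\sigma_1), \comparisonIso(\sigma_2)) = d(\sigma_1, \sigma_2)$. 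In particular $\comparisonIso$ is continuous, and in fact an isometric embedding.

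For the local-homeomorphism property I would invoke the commutative square in which both vertical maps $\cZ$ are local homeomorphisms by \cite{MR2373143} and the bottom arrow $-\circ\closedImmersion_*$ is the linear dual of the isomorphism $\closedImmersion_* \colon K(\fiberDsupp) \xrightarrow{\sim} K(\simpleTotalDsupp)$ (both groups being identified with $K(\cptComponent)$), hence itself an isomorphism of finite-dimensional complex vector spaces. Locally $\comparisonIso$ therefore factors as $\cZ^{-1} \circ (\text{linear iso}) \circ \cZ$ and is a local biholomorphism. Global injectivity is then immediate: if $\comparisonIso(\sigma_1) = \comparisonIso(\sigma_2)$, the central charges agree because the bottom arrow is injective, and the stable objects agree with their phases because $\cP_{s,i}(\phi) = \closedImmersion_* \cP'_{s,i}(\phi)$ by Lemma~\ref{lem:lemma-on-inducing-stability-condition}(2); a stability condition being determined by its central charge together with its stable objects, we conclude $\sigma_1 = \sigma_2$.

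It remains to show that $\comparisonIso$ maps $\Stab^{\circ}(\simpleTotalDsupp)$ onto the whole component $\Stab^{\circ}(\fiberDsupp)$; this is the main obstacle. Being an injective local homeomorphism, $\comparisonIso$ is an open embedding, so its image $V$ is open in $\Stab^{\circ}(\fiberDsupp)$. To see that $V$ is also closed, I would take $\tau \in \Stab^{\circ}(\fiberDsupp)$ in the closure of $V$ and a sequence $\comparisonIso(\sigma_n) \to \tau$; the isometry property of stage (i) makes $(\sigma_n)$ a Cauchy sequence, which converges to some $\sigma \in \Stab^{\circ}(\simpleTotalDsupp)$ by completeness of the space of stability conditions and closedness of the component, and continuity gives $\comparisonIso(\sigma) = \tau$, so $\tau \in V$. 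Since $\Stab^{\circ}(\fiberDsupp)$ is connected and $V$ is nonempty, open and closed, we obtain $V = \Stab^{\circ}(\fiberDsupp)$. Here the delicate points are precisely that the limiting slicing must again satisfy the support property---guaranteed by the equality of support-property constants in Lemma~\ref{lem:lemma-on-inducing-stability-condition}(4)---and that the Cauchy sequence does not escape the component, which is exactly what the isometry together with completeness prevents. Combining openness of the image, the isometric (hence homeomorphic) identification onto $V$, and the local biholomorphism property then gives the desired isomorphism of complex manifolds.
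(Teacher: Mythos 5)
Your stages (i)--(iii) are essentially the argument the paper appeals to (Toda's Corollary~5.4 and Theorem~5.5 in \cite{MR2448280}): the Lipschitz estimate gives continuity, the commutative square with the two local homeomorphisms $\cZ$ and the linear isomorphism $-\circ\closedImmersion_*$ gives the local biholomorphism property, and Lemma~\ref{lem:lemma-on-inducing-stability-condition}~(2) gives injectivity. The gap is in stage (iv). The claimed isometry $d(\comparisonIso(\sigma_1),\comparisonIso(\sigma_2))=d(\sigma_1,\sigma_2)$ does not follow from what you cite: the generalized metric on $\Stab(\simpleTotalDsupp)$ is a supremum over \emph{all} nonzero objects of $\simpleTotalDsupp$, whereas parts (2)--(4) of the lemma only control phases and masses of objects in the essential image of $\closedImmersion_*$. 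That image is a proper subclass --- e.g.\ a sheaf on an infinitesimal thickening of $\cptComponent$ in $\totalSpace$ transverse to $\fiber$ has scheme-theoretic support not contained in $\fiber$ and is not a pushforward --- so what follows is only $d(\comparisonIso(\sigma_1),\comparisonIso(\sigma_2))\le d(\sigma_1,\sigma_2)$. This is the wrong direction for your argument: to conclude that $(\sigma_n)$ is Cauchy from $(\comparisonIso(\sigma_n))$ being Cauchy you need the reverse bound. (Your reduction of the slicing-inclusion comparison to the fiber side via stable objects is fine, but the mass term $|\log(m_1(F)/m_2(F))|$ for $F$ outside the image of $\closedImmersion_*$ is not addressed.) Moreover, the completeness of $(\Stab,d)$ is invoked without justification and is not among the tools the paper uses.

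The argument in \cite{MR2448280} avoids both issues and uses only the Lipschitz direction. Given $\tau$ in $\Stab^\circ(\fiberDsupp)$ lying in the closure of the image, with $\comparisonIso(\sigma_n)\to\tau$, the stability conditions in a small neighborhood of $\tau$ satisfy the support property with a uniform constant, hence so do the $\sigma_n$ by Lemma~\ref{lem:lemma-on-inducing-stability-condition}~(4). The quantitative form of Bridgeland's deformation theorem then yields, around each $\sigma_n$ for $n$ large, a chart on which $\cZ$ is a homeomorphism onto a ball of central charges whose radius depends only on that constant. Since $Z_{\sigma_n}=Z_{\comparisonIso(\sigma_n)}\circ(\closedImmersion_*)^{-1}$ converges to $Z_\tau\circ(\closedImmersion_*)^{-1}$, for $n$ large one finds $\sigma$ in this chart with $Z_{\comparisonIso(\sigma)}=Z_\tau$ and $d(\comparisonIso(\sigma),\tau)<1$; two stability conditions with equal central charge at distance less than $1$ coincide, so $\tau$ lies in the image. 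You should replace the isometry--completeness step with this deformation argument.
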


\subsection{Wall and chamber structure on $\Stab(\simpleTotalDsupp)$}\label{subsec:wall-and-chamber-structure-on-total-space}
Let us recall the structure of $\Stab(\simpleTotalDsupp)$ studied in \cite{bayer-macri}.

A stability condition $\sigma = (Z, \cP) \in \Stab(\simpleTotalDsupp)$ is called \emph{geometric} if the skyscraper sheaves $\cO_x$ for all $x \in \cptComponent$ are $\sigma$-stable of the same phase.
Denote the set of all geometric stability conditions on $\simpleTotalDsupp$ by $\totalGeometricChamber$.
It is an open subset of $\Stab(\simpleTotalDsupp)$ and its boundary $\partial \totalGeometricChamber$ is a locally finite union of \emph{walls} (real codimension one submanifolds) in $\Stab(\simpleTotalDsupp)$.

Define the natural $\bC$-slice $\nTotalGeometricChamber$ of $\totalGeometricChamber$ by
\begin{equation}
    \nTotalGeometricChamber = \{\sigma = (Z, \cP) \in \totalGeometricChamber \mid Z(\cO_x) = -1, \phi(\cO_x) = 1 \text{ for all } x \in \cptComponent\}.
\end{equation}
Consider a basis $([\cO_x], [\cO_l], [\cO_{\cptComponent}])$ of $K(\simpleTotalDsupp)$ where $x \in \cptComponent$ is a point and $l \subset \cptComponent$ is a line.
Denote the corresponding dual basis of $\Hom(K(\simpleTotalDsupp), \bC)$ by $(c, d, r)$:
\begin{equation}
    [F] = c(F)[\cO_x] + d(F)[\cO_l] + r(F)[\cO_{\cptComponent}].
\end{equation}
By definition, the central charge $Z$ of any $\sigma \in \nTotalGeometricChamber$ is of the form $Z = Z_{a, b} = -c + a d + b r$ for some $(a, b) \in \bC^2$.

For a purely 2-dimensional sheaf $F$ on $\totalSpace$ supported on $\cptComponent$, the slope $\slopeOfF$ is defined by $\slopeOfF = d(F)/r(F)$, and slope-stability is defined in the usual way.
One has the Harder--Narasimhan filtration with respect to slope-stability for such sheaves.
Every exceptional vector bundle $E$ on $\cptComponent$ gives rise to a slope-stable sheaf $\cptComponentImmersion_*E$.
For $B \in \bR$, we define a torsion pair $(\widetilde{\cT_B}, \widetilde{\cF_B})$ of $\Coh_{\cptComponent}(\totalSpace) \subset \simpleTotalDsupp$ as follows:
$\widetilde{\cT_B}$ is the extension-closed subcategory generated by slope-semistable sheaves $F$ with $\slopeOfF > B$ and torsion sheaves, and $\widetilde{\cF_B}$ is the extension-closed subcategory generated by slope-semistable sheaves $F$ with $\slopeOfF \leq B$.
The tilt $\langle \widetilde{\cF_B}[1], \widetilde{\cT_B} \rangle$ gives the heart of a bounded t-structure on $\simpleTotalDsupp$.
\begin{lemma}[{\cite[Section 3]{bayer-macri}}]
    Any $\sigma \in \nTotalGeometricChamber$ is of the form $\sigma_{a, b} = (Z_{a, b}, \langle \widetilde{\cF_B}[1], \widetilde{\cT_B} \rangle)$ for some $(a, b) \in \bC^2$ with $\Im a > 0$ and $B = B(a, b) = -\frac{\Im b}{\Im a}$.
\end{lemma}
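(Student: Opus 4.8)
The plan is to follow the template of Bridgeland's analysis of geometric stability conditions on K3 surfaces, adapted by Bayer--Macr\`i to the local surface $\simpleTotalDsupp$. The goal is to show that for $\sigma = (Z, \cP) \in \nTotalGeometricChamber$ the heart $\cA = \cP((0,1])$ is forced to be the tilt $\langle \widetilde{\cF_B}[1], \widetilde{\cT_B}\rangle$, and that its central charge is $Z_{a,b}$ with $\Im a > 0$. By the normalization built into $\nTotalGeometricChamber$ we already have $\cO_x \in \cP(1)$ and $Z(\cO_x) = -1$ for all $x \in \cptComponent$, and expanding $Z$ in the dual basis gives $Z = Z_{a,b} = -c + ad + br$ for some $(a,b) \in \bC^2$. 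So the remaining content is (i) identifying $\cA$ as a tilt of $\Coh_{\cptComponent}(\totalSpace)$, (ii) identifying the torsion pair with the slope torsion pair at $B$, and (iii) pinning down the sign $\Im a > 0$.

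First I would show that $\cA$ is a tilt of the standard heart $\Coh_{\cptComponent}(\totalSpace)$, i.e. every $E \in \cA$ has standard cohomology sheaves $H^i(E)$ concentrated in degrees $i \in \{-1, 0\}$; Bridgeland's tilting criterion then exhibits $\cA = \langle \cF[1], \cT\rangle$ for the torsion pair $\cT = \cA \cap \Coh_{\cptComponent}(\totalSpace)$, $\cF = \cA[-1] \cap \Coh_{\cptComponent}(\totalSpace)$. The vanishing of positive cohomology is clean: if $b = \max\{i : H^i(E) \neq 0\}$, truncation followed by a surjection $H^b(E) \twoheadrightarrow \cO_x$ for some $x \in \Supp H^b(E)$ produces a nonzero map $E \to \cO_x[-b]$, so $\phi^-(E) \le \phi(\cO_x[-b]) = 1 - b$; since $\phi^-(E) > 0$ this forces $b \le 0$. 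The vanishing of cohomology below degree $-1$ is the dual statement and is where the Calabi--Yau structure enters: the Serre functor of $\simpleTotalDsupp$ is the shift $[3]$ (as $\omega_{\totalSpace}$ is trivial), and applying the previous argument to the derived dual of $E$ — under which skyscrapers are sent to shifts of themselves — bounds the cohomology from below.

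Next I would identify the torsion pair $(\cT, \cF)$ with the slope torsion pair $(\widetilde{\cT_B}, \widetilde{\cF_B})$. On a sheaf $F$ one computes $\Im Z_{a,b}(F) = (\Im a)\, d(F) + (\Im b)\, r(F)$, which for $r(F) > 0$ equals $(\Im a)\, r(F)\,(\slopeOfF - B)$ with $B = -\Im b/\Im a$. Using the stability of the skyscrapers $\cO_x$ one shows that no slope-semistable sheaf is split by the torsion pair (each such sheaf lies entirely in $\cT$ or entirely in $\cF$), so the torsion pair is numerical and governed by the sign of $\Im Z$. Matching signs then forces $\Im a > 0$ — otherwise a slope-semistable sheaf that must lie in $\cT \subset \cA$ would have $\Im Z < 0$, contradicting that $Z$ is a stability function on $\cA$ — together with the membership rules $\slopeOfF > B \Rightarrow F \in \widetilde{\cT_B}$ and $\slopeOfF \le B \Rightarrow F \in \widetilde{\cF_B}$, torsion sheaves going into $\widetilde{\cT_B}$. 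This yields $\cA = \langle \widetilde{\cF_B}[1], \widetilde{\cT_B}\rangle$, and hence $\sigma = \sigma_{a,b}$.

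The main obstacle I anticipate is the lower cohomology bound in the tilting step, together with the simultaneous determination of the torsion pair and of the sign of $\Im a$. The former is delicate because $\totalSpace$ is a non-compact threefold and $\fiber$ is singular, so Serre duality and the Euler pairing are available only on the supported subcategory $\simpleTotalDsupp$ (as already emphasized in the introduction); one must check that the derived dual preserves $\simpleTotalDsupp$ and sends $\cO_x$ to a shift of itself before the duality argument can be run. The latter requires the rigidity input that no slope-semistable sheaf is split by the torsion pair, which rests on the stability of all $\cO_x$ and a careful use of the support property to exclude Harder--Narasimhan factors of uncontrolled phase; this is the step that genuinely uses geometricity rather than just the numerics of $Z_{a,b}$.
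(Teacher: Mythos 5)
You should first note that the paper offers no proof of this lemma at all: it is imported verbatim from \cite[Section 3]{bayer-macri}, which in turn follows Bridgeland's analysis for K3 surfaces \cite[Lemma 10.1]{MR2376815}. Your overall architecture --- normalize so that $Z=Z_{a,b}$, show the heart $\cP((0,1])$ is a tilt of $\Coh_{\cptComponent}(\totalSpace)$, identify the torsion pair with the slope torsion pair at $B$, and read off $\Im a>0$ --- is exactly the strategy of the cited source, and your treatment of the upper cohomology bound, of the torsion-pair identification, and of the sign of $\Im a$ is sound.

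The genuine gap is the lower cohomology bound. ``Applying the previous argument to the derived dual of $E$'' is not licensed: $E^\vee$ does not lie in $\cP((0,1])$ or in any controlled part of the slicing, so the phase inequality that drives the surjection-onto-a-skyscraper argument is unavailable for it. What dualizing the available information actually yields is $\Hom(E,\cO_x[k])=0$ for $k>3$ (from $\Hom(\cO_x,E[j])=0$ for $j<0$ and Serre duality on the CY3 category $\simpleTotalDsupp$), which together with the vanishing for $k<0$ only confines $\RHom(E,\cO_x)$ to degrees $[0,3]$ --- and this is too weak. Concretely, $E=j_*\cO_{\cptComponent}[2]$ has $\RHom_{\totalSpace}(E,\cO_x)$ concentrated in degrees $\{2,3\}$ for every $x\in\cptComponent$, so it passes both of your vanishing tests, yet it must be excluded from the heart. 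The actual argument of \cite[Section 3]{bayer-macri} and \cite[Lemma 10.1]{MR2376815} first reduces to $\sigma$-stable Harder--Narasimhan/Jordan--H\"older factors and then uses that a nonzero morphism between stable objects of equal phase is an isomorphism, which upgrades the vanishing to $\Hom(\cO_x,E[j])=0$ for $j\le 0$ whenever $E$ is stable and not a shifted skyscraper; Serre duality then confines $\RHom(E,\cO_x)$ to degrees $[0,2]$, and a spectral-sequence corner argument with a generically chosen $x$ in the support of the extremal cohomology sheaf gives the amplitude $[-1,0]$ together with the purity of $H^{-1}(E)$ (which you also need, since $\widetilde{\cF_B}$ consists of pure two-dimensional sheaves). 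This reduction to stable factors is the missing idea; without it the best bound obtainable from your sketch is amplitude $[-2,0]$.
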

This shows that the map $(Z, \cP) \mapsto Z$ is an injection from $\nTotalGeometricChamber$ to $\{Z_{a, b} \mid (a, b) \in \bC^2, \Im a > 0\}$.
Next we describe the image.
For $\sigma = (Z_{a, b}, \langle \widetilde{\cF_B}[1], \widetilde{\cT_B} \rangle)$,
suppose there exists a slope-stable sheaf $F$ with slope $\mu(F) = B(a, b) = -\frac{\Im b}{\Im a}$ and discriminant $\discriminantOfF \coloneq \frac{d(F)^2}{2r(F)^2} - \frac{c(F)}{r(F)}$.
Then $F[1]$ lies in $\cP((0, 1]) = \langle \widetilde{\cF_B}[1], \widetilde{\cT_B} \rangle$ and satisfies $\Im Z(F[1]) = -\Im a \cdot r(F)(\slopeOfF - B(a, b)) = 0$, and hence
\begin{equation}
    \Re Z(F[1]) = r(F) \cdot(g(a, b) - \Delta_F) < 0
\end{equation}
where
\begin{equation}\label{eq:g}
    g(a, b) =  -\Re b - B(a, b) \cdot \Re a + \frac{1}{2}B(a,b)^2.
\end{equation}
This suggests that the point $f(a, b) = (B(a, b), g(a, b)) \in \bR^2$ should lie below the set
\begin{equation}
    S = \{(\slopeOfF, \discriminantOfF) \mid F \text{ is a slope-stable sheaf on }\cptComponent\}.
\end{equation}

On the other hand, according to \cite{MR816365, MR1428426} the set $S$ has the following explicit description.
First, for a given $B \in \bQ$ there exists at most one exceptional vector bundle $E$ on $\cptComponent$ of slope $\slopeOfE = B$, and the set of exceptional slopes $\{\slopeOfE\}_E$ is completely determined in \cite{MR816365}.
Second, there exists a continuous periodic function $\dpFunction \colon \bR \to [\frac{1}{2}, 1]$ of period 1 such that the set
\begin{equation}
    \{(\slopeOfF, \discriminantOfF) \mid F \text{ is a slope-stable and non-exceptional sheaf on }\cptComponent\}
\end{equation}
coincides with the set $\{(\mu, \Delta) \in \bQ^2 \mid \Delta \geq \dpFunction(\mu)\}$.
See \cite[Section 2, Appendix A.1]{bayer-macri} for details.

Motivated by these observations, let $f \colon \{(a, b) \mid \Im a > 0\} \to \bR^2$ be the map defined by
\begin{equation}\label{eq:f}
    f(a, b) = \left(B(a, b), g(a, b)\right),
\end{equation}
and $G$ be the inverse image under $f$ of the points $(x, y) \in \bR^2$ that lie strictly below both of the following:
\begin{enumerate}
    \item $S_\infty = \{(\mu, \Delta) \mid \Delta \geq \dpFunction(\mu)\}$, and
    \item the discrete set $S_E = \{(\slopeOfE, \discriminantOfE) \mid E \text{ is an exceptional bundle on }\cptComponent\}$, which is disjoint from $S_\infty$.
\end{enumerate}
\begin{remark}\label{rem:slope-discriminant-curve}
    Every point $(\slopeOfE, \discriminantOfE) \in S_E$ lies below the curve $y = \dpFunction(x)$ which defines $S_\infty$.
\end{remark}

\begin{theorem}[{\cite[Section 4]{bayer-macri}}]\label{thm:total-space-geometric-chamber}
    The forgetful map $\cZ \colon \sigma_{a, b} \mapsto (a, b)$ gives an isomorphism from $\nTotalGeometricChamber$ to $G$.
    In particular, $\totalGeometricChamber$ is isomorphic to $G \times \bC$.
    Moreover, for any $\sigma \in \totalGeometricChamber$ the objects $\cO_x[n]$ for $x \in \cptComponent$ and $n \in \bZ$ are the only $\sigma$-stable objects whose class in $K(\simpleTotalDsupp)$ is equal to $\pm[\cO_x]$.
\end{theorem}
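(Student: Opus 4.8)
The plan is to prove that $\cZ$ restricts to a bijection $\nTotalGeometricChamber \to G$, upgrade this to an isomorphism of complex manifolds via Bridgeland's local homeomorphism, and then dispatch the rigidity statement for classes $\pm[\cO_x]$ separately. Injectivity is formal: by the lemma above every $\sigma \in \nTotalGeometricChamber$ equals $\sigma_{a,b} = (Z_{a,b}, \langle \widetilde{\cF_B}[1], \widetilde{\cT_B} \rangle)$ with $B = B(a,b)$, and since $(c,d,r)$ is the dual basis, the identity $Z_{a,b} = -c + ad + br$ recovers $(a,b)$, hence also the tilt parameter and the heart. For the inclusion $\cZ(\nTotalGeometricChamber) \subseteq G$, I use that the central charge of a stability condition must map every nonzero object of its heart into $\bH \cup \bR_{<0}$. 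Testing this on $F[1]$ for a slope-stable sheaf $F$ with $\mu_F = B(a,b)$ gives $\Im Z_{a,b}(F[1]) = 0$, so the computation $\Re Z_{a,b}(F[1]) = r(F)(g(a,b) - \Delta_F)$ preceding the theorem forces $g(a,b) < \Delta_F$. Letting $F$ range over slope-stable sheaves of slope approaching $B(a,b)$ and invoking the description of $S$ — whose minimal discriminant at a given slope is $\dpFunction(\mu)$ away from exceptional slopes and $\Delta_E$ at $\mu = \mu_E$ — this says exactly that $f(a,b)$ lies strictly below both $S_\infty$ and $S_E$, i.e.\ $(a,b) \in G$.

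The essential direction is $G \subseteq \cZ(\nTotalGeometricChamber)$: for $(a,b) \in G$ I must show that $\sigma_{a,b}$ is an honest stability condition with the support property in which every $\cO_x$ is stable of phase $1$. That $Z_{a,b}$ is a stability function on $\langle \widetilde{\cF_B}[1], \widetilde{\cT_B} \rangle$ follows by reversing the previous computation: decomposing an object of the heart along the torsion pair and the slope Harder--Narasimhan pieces, the only contributions with $\Im Z_{a,b} = 0$ come from zero-dimensional sheaves (which have $Z_{a,b} < 0$) and from slope-$B(a,b)$ semistable sheaves shifted by $[1]$, and for the latter the strict inequality $g(a,b) < \Delta_F$ guaranteed by $(a,b) \in G$ places $\Re Z_{a,b}$ on the negative real axis. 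Stability of $\cO_x$ is then immediate, as $\cO_x$ is a simple object of the maximal phase $1$ in the heart and so admits no destabilizing subobject. The main obstacle, and the technical heart of the argument, is the \emph{support property}. I would establish it by exhibiting a quadratic form $Q$ on $K(\simpleTotalDsupp)$ that is negative on $\ker Z_{a,b}$ and nonnegative on the classes of all $\sigma_{a,b}$-semistable objects; such a $Q$ is assembled from the Bogomolov--Gieseker inequality $\Delta \geq 0$ together with the explicit Dr\'ezet--Le Potier description of $\dpFunction$ and of the exceptional locus $\{(\mu_E, \Delta_E)\}$ on $\cptComponent = \bP^2$. Controlling the semistable classes uniformly, in particular near the exceptional slopes where $S_E$ dips below $S_\infty$, is where the full force of the classification of stable sheaves on $\bP^2$ is needed, and I expect this to be the most delicate step; Noetherianity of the tilted heart then provides Harder--Narasimhan filtrations, which $Q$ upgrades to a stability condition satisfying the support property.

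With the bijection in hand, $\cZ$ is a local homeomorphism by \cite[Theorem 1.2]{MR2373143} and a continuous bijection onto the open set $G$, hence an isomorphism of complex manifolds $\nTotalGeometricChamber \xrightarrow{\sim} G$; since $\totalGeometricChamber$ is a free $\bC$-orbit over its slice $\nTotalGeometricChamber$, this yields $\totalGeometricChamber \cong G \times \bC$. For the final assertion, after applying the $\bC$-action I may assume $\sigma \in \nTotalGeometricChamber$ and treat a $\sigma$-stable $F$ with $[F] = [\cO_x]$, so that $(r,d,c)(F) = (0,0,1)$ and $Z_{a,b}(F) = -1$, placing $F$ at phase $1$. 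As $r(F) = d(F) = 0$, the object $F$ is supported in dimension zero; choosing $y \in \Supp(F)$ produces a nonzero morphism between $F$ and the stable object $\cO_y$ of the same phase $1$, and since nonproportional stable objects of equal phase admit no morphisms, I conclude $F \cong \cO_y$ (hence $F \cong \cO_y[n]$ for $[F] = \pm[\cO_x]$ before normalization), running the rigidity argument exactly as in \cite[Lemma 4.5]{MR2244106} already invoked in Corollary \ref{cor:outside-skyscraper-sheaf}.
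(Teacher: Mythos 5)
The paper offers no proof of this statement: it is imported verbatim from Bayer--Macr\`i, and the ``proof'' is the citation to \cite[Section 4]{bayer-macri}. What you have written is therefore not an alternative to anything in the paper but a reconstruction of the external argument being cited. As a reconstruction it is structurally faithful: the injectivity of $\cZ$ on the slice, the necessity of $(a,b)\in G$ via testing $Z_{a,b}$ on $F[1]$ for slope-stable $F$ of slope $B(a,b)$, the construction of $\sigma_{a,b}$ on the tilted heart for $(a,b)\in G$, and the upgrade from continuous bijection to isomorphism via Bridgeland's local homeomorphism all match the source. Two points keep this from being a complete proof. First, you explicitly defer the support property (equivalently, local finiteness and the existence of Harder--Narasimhan filtrations in the tilted heart) to an unspecified quadratic form assembled from Bogomolov--Gieseker and Dr\'ezet--Le Potier; since this is, as you say yourself, the technical heart of the theorem, the proposal as written is a correct outline rather than a proof. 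Second, in the final rigidity step the inference ``$r(F)=d(F)=0$, hence $F$ is supported in dimension zero'' is not valid for an object of the derived category, since the class in $K(\simpleTotalDsupp)$ only sees the alternating sum of cohomology sheaves. The standard repair uses the heart: a stable $F\in\cP(1)$ of class $[\cO_x]$ sits in a triangle $H^{-1}(F)[1]\to F\to H^0(F)$ with $H^{-1}(F)\in\widetilde{\cF_B}$ torsion-free; if $H^0(F)=0$ then $r(F)=-r(H^{-1}(F))<0$, a contradiction, and otherwise any $y$ in the support of the sheaf $H^0(F)$ gives a nonzero map $F\to\cO_y$ between stable objects of the same phase, forcing $F\cong\cO_y$. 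With that correction your appeal to the argument of \cite[Lemma 4.5]{MR2244106} goes through.
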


The structure of the boundary $\partial \totalGeometricChamber$ is given as follows.

For an exceptional bundle $E$ on $\cptComponent$, define a subset $\boundaryE$ of $\bC^2$ by
\begin{equation}
    \boundaryE =
    \left\{
      (a, b) \middle|
      \begin{aligned}
           & \Im a > 0,                                       \\
           & B(a, b) = \slopeOfE,                             \\
           & \discriminantOfE < g(a, b) < \dpFunction(B(a,b))
      \end{aligned}
    \right\}.
\end{equation}
In other words, $\boundaryE$ is the inverse image under $f$ \eqref{eq:f} of the vertical line segment connecting the point $(\slopeOfE, \discriminantOfE)$ and the curve $y = \dpFunction(x)$ (Remark \ref{rem:slope-discriminant-curve}).
A general point of $\partial G$ lies on some $\boundaryE$.

For $(a, b) \in \boundaryE$, we define a torsion pair $(\widetilde{\cT_E}, \widetilde{\cF_E})$ of $\Coh_{\cptComponent}(\totalSpace)$ as follows:
$\widetilde{\cT_E}$ is the extension-closed subcategory generated by $j_*E$, slope-semistable sheaves $F$ with $\slopeOfF > \slopeOfE$, and torsion sheaves, and
$\widetilde{\cF_E}$ is the extension-closed subcategory generated by slope-semistable sheaves $F$ with $\slopeOfF \leq \slopeOfE$ and $\Hom(j_*E, F) = 0$.
Then $\sigma_{a, b}^{+} = (Z_{a, b}, \langle \widetilde{\cF_E}[1], \widetilde{\cT_E} \rangle)$ defines a stability condition lying on the boundary $\partial \totalGeometricChamber$ \cite[Section 5]{bayer-macri}.
This construction gives rise to a wall
\begin{equation}
    \widetilde{W_E^+} = \overline{\{\sigma_{a, b}^{+} \cdot \lambda \mid (a, b) \in \boundaryE, \lambda \in \bC\}} \subset \partial \totalGeometricChamber.
\end{equation}
Together with $\widetilde{W_E^-} = T_E^{-1}(\widetilde{W_E^+})$, these walls describe the entire boundary $\partial \totalGeometricChamber$ as follows.
We denote the kernel of the evaluation map $j_*E^{\oplus r(E)} = \Hom(j_*E, \cO_x) \otimes j_*E \to \cO_x$ by $\widetilde{E^x}$.
\begin{theorem}[{\cite[Theorem 5.1]{bayer-macri}}]\label{thm:total-space-boundary}
    For every exceptional bundle $E$ on $\cptComponent$, there exist two codimension one walls $\widetilde{W_E^+}$ and $\widetilde{W_E^-}$ in $\partial \totalGeometricChamber$ such that
    \begin{enumerate}
        \item $\widetilde{W_E^+}$ consists of $\sigma = (Z, \cP)$ such that $j_*E$ and $\cO_x$ for $x \in \cptComponent$ are $\sigma$-semistable of the same phase $\phi$, with $j_*E$ being a subobject of $\cO_x$ in $\cP(\phi)$: at a general point of $\widetilde{W_E^+}$, the Jordan-H\"older filtration of $\cO_x$ is given by
              \begin{equation}
                  j_*E^{\oplus r(E)} \to \cO_x \to \widetilde{E^x}[1] \xrightarrow{+1},
              \end{equation}
        \item $\widetilde{W_E^-}$ consists of $\sigma = (Z, \cP)$ such that $j_*E[2]$ and $\cO_x$ for $x \in \cptComponent$ are $\sigma$-semistable of the same phase $\phi$, with $j_*E[2]$ being a quotient of $\cO_x$ in $\cP(\phi)$: at a general point of $\widetilde{W_E^-}$, the Jordan-H\"older filtration of $\cO_x$ is given by
              \begin{equation}
                  T_E^{-1}(\widetilde{E^x}[1]) \to \cO_x \to j_*E^{\oplus r(E)}[2] \xrightarrow{+1},
              \end{equation}
        \item $\widetilde{W_E^+} = \overline{\totalGeometricChamber} \cap T_E(\overline{\totalGeometricChamber})$ and $\widetilde{W_E^-} = \overline{\totalGeometricChamber} \cap T_E^{-1}(\overline{\totalGeometricChamber})$.
    \end{enumerate}
    Moreover, we have $\partial \totalGeometricChamber = \bigcup_E (\widetilde{W_E^+} \sqcup \widetilde{W_E^-})$ where $E$ runs through all exceptional bundles on $\cptComponent$.
\end{theorem}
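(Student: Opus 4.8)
The plan is to read the boundary off the explicit model $\totalGeometricChamber \cong G \times \bC$ of Theorem \ref{thm:total-space-geometric-chamber} and reduce everything to the shape of $\partial G$ in the slope--discriminant plane. Recall $G = f^{-1}(R)$, where $R \subset \bR^2$ is the set of points lying strictly below both the Drézet--Le Potier curve $y = \dpFunction(x)$ and every exceptional point $(\slopeOfE, \discriminantOfE)$. By Remark \ref{rem:slope-discriminant-curve} each $(\slopeOfE, \discriminantOfE)$ sits \emph{strictly} below the curve, so $R$ is the subgraph of $\dpFunction$ with a vertical segment deleted over each exceptional slope $x = \slopeOfE$, running from $(\slopeOfE, \discriminantOfE)$ up to the curve. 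Approaching the curve $y = \dpFunction(x)$ itself is a degeneration at infinity---there the discriminants of slope-stable sheaves accumulate and the support property breaks down---so it contributes no wall inside $\Stab(\simpleTotalDsupp)$; the only finite walls are the preimages $\boundaryE$ of the deleted segments.

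First I would verify the stability statement (1) on $\widetilde{W_E^+}$. For a general $(a,b) \in \boundaryE$ one has $B(a,b) = \slopeOfE$ and $\discriminantOfE < g(a,b) < \dpFunction(\slopeOfE)$, hence $\Im Z_{a,b}(j_*E) = 0$ and $\Re Z_{a,b}(j_*E) < 0$, so $j_*E$ is stable of phase $1$, the phase of $\cO_x$. As the heart $\langle \widetilde{\cF_E}[1], \widetilde{\cT_E}\rangle$ was obtained precisely by tilting the slope heart at $j_*E$, both $j_*E$ and $\cO_x$ lie in $\cP(1)$. The evaluation sequence $0 \to \widetilde{E^x} \to j_*E^{\oplus r(E)} \to \cO_x \to 0$ in $\Coh_{\cptComponent}(\totalSpace)$ rotates to the triangle $j_*E^{\oplus r(E)} \to \cO_x \to \widetilde{E^x}[1] \xrightarrow{+1}$; one computes $\discriminantOfF = \dpFunction(\slopeOfE)$ for $F = \widetilde{E^x}$, so its quotient is stable of phase $1$ on the whole slit (becoming massless only at the curve), and this is the Jordan--Hölder filtration of $\cO_x$ with $j_*E$ a subobject, giving (1).

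Next I would prove (3) and deduce (2). The key input is that $\boundaryE$ is a totally semistable wall for the spherical object $j_*E$, so that crossing it is realized by $\sphericalTwistE$: one checks $\sphericalTwistE$ sends $\totalGeometricChamber$ to the chamber on the far side and matches slicings along the wall, yielding $\widetilde{W_E^+} = \overline{\totalGeometricChamber} \cap \sphericalTwistE(\overline{\totalGeometricChamber})$; applying $\sphericalTwistE^{-1}$ and recalling $\widetilde{W_E^-} = \sphericalTwistE^{-1}(\widetilde{W_E^+})$ gives $\widetilde{W_E^-} = \overline{\totalGeometricChamber} \cap \sphericalTwistE^{-1}(\overline{\totalGeometricChamber})$, which is (3). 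The face $\widetilde{W_E^-}$ is the limit of $\totalGeometricChamber$ reached from the opposite side of the slit; there the relevant object is $\sphericalTwistE^{-1}(j_*E) = j_*E[2]$, and the Serre-dual coevaluation $\cO_x \to j_*E^{\oplus r(E)}[2]$ (dual to the evaluation, via $\Ext^2(\cO_x, j_*E) \cong \Ext^1(j_*E, \cO_x)^\vee$ on the Calabi--Yau threefold $\totalSpace$) exhibits $j_*E[2]$ as a quotient of $\cO_x$ with cone $\sphericalTwistE^{-1}(\widetilde{E^x}[1])$, giving (2).

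Finally I would prove completeness, $\partial \totalGeometricChamber = \bigcup_E(\widetilde{W_E^+} \sqcup \widetilde{W_E^-})$, which I expect to be the main obstacle. At any boundary point $\cO_x$ is strictly semistable, so it has a proper sub or quotient $F$ of the same phase; then $\Im Z(F) = 0$ forces $\slopeOfF = B(a,b)$, while lying on the finite boundary $\partial G$ forces a slope-stable factor of $F$ to have discriminant strictly below $\dpFunction(\slopeOfF)$. Here the Drézet--Le Potier classification \cite{MR816365, MR1428426} is essential: a non-exceptional slope-stable sheaf satisfies $\Delta \geq \dpFunction(\mu)$ and so can never undercut the curve, whence the destabilizing factor must be the unique exceptional bundle of slope $B(a,b)$, i.e.\ a shift of $j_*E$; controlling the dense set of exceptional slopes accumulating toward the curve is the delicate part. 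Granting this, every boundary point lies on some $\boundaryE$, and the two faces are distinguished by whether $j_*E$ occurs as a subobject (on $\widetilde{W_E^+}$) or $j_*E[2]$ as a quotient (on $\widetilde{W_E^-}$), which forces the disjointness and completes the proof.
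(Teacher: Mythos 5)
The paper does not prove this statement at all: it is imported verbatim from \cite[Theorem 5.1]{bayer-macri}, so there is no internal argument to compare against. Measured against the cited source, your sketch follows the same strategy (read $\partial\totalGeometricChamber$ off the slits $\boundaryE$ in the slope--discriminant plane, tilt at $j_*E$ to get the semistable $\cO_x$ on $\widetilde{W_E^+}$, transport to $\widetilde{W_E^-}$ by $T_E^{-1}$), and your supporting computations are correct: $\Delta_{\widetilde{E^x}}=\discriminantOfE+1/r(E)^2=\dpFunction(\slopeOfE)$, and $T_E^{-1}(j_*E)=j_*E[2]$ since $j_*E$ is $3$-spherical; the coevaluation $\cO_x\to j_*E^{\oplus r(E)}[2]$ via $\Ext^2(\cO_x,j_*E)\cong\Ext^1(j_*E,\cO_x)^\vee\cong\bC^{r(E)}$ is also right.

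The genuine gap is the final clause, $\partial\totalGeometricChamber=\bigcup_E(\widetilde{W_E^+}\sqcup\widetilde{W_E^-})$, which you flag as ``the main obstacle'' and then grant. Your reduction --- at a boundary point $\cO_x$ is strictly semistable, a JH factor $F$ has $\Im Z(F)=0$, hence ``a slope-stable factor of $F$ has discriminant below $\dpFunction$, hence is exceptional'' --- is not yet an argument: a $\sigma$-stable JH factor of $\cO_x$ at the boundary need not a priori be (a shift of) a slope-stable \emph{sheaf}, so the Dr\'ezet--Le Potier inequality does not directly apply to it; one must first show that the destabilizing stable object is rigid (e.g.\ via the Euler pairing and the support property) and only then invoke the classification of exceptional bundles, and one must separately rule out boundary points lying over the curve $y=\dpFunction(x)$ away from the slits (your ``degeneration at infinity'' claim, which also needs proof, since exceptional slopes are dense and the slits accumulate on the curve). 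This analysis occupies the bulk of Sections 5--6 of \cite{bayer-macri}. A second, smaller soft spot is item (3): that $T_E$ carries $\overline{\totalGeometricChamber}$ onto the chamber across $\widetilde{W_E^+}$ and that the intersection is exactly the wall is asserted with ``one checks''; this requires identifying the stable objects of class $[\cO_x]$ on the far side (namely $T_E(\cO_x)$ resp.\ $T_E^{-1}(\cO_x)$), which is again part of the content of the cited theorem rather than a formal consequence.
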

Since $G$ is connected, there is a unique connected component $\totalStabDagger \subset \Stab(\simpleTotalDsupp)$ containing $\totalGeometricChamber = G \times \bC$.
The description of the boundary above implies the following.
\begin{theorem}[{\cite[Corollary 5.2]{bayer-macri}}]
    We have $\totalStabDagger = \bigcup_{\Phi} \Phi(\overline{\totalGeometricChamber})$ where $\Phi$ runs through the subgroup of $\Aut(\simpleTotalDsupp)$ generated by the spherical twists $T_E$ for all exceptional bundles $E$ on $\cptComponent$.
\end{theorem}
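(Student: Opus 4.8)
The plan is to realize $\totalStabDagger$ as the orbit of the closed geometric chamber $\overline{\totalGeometricChamber}$ under the group generated by the spherical twists, the key point being that Theorem~\ref{thm:total-space-boundary} identifies every boundary wall-crossing with an application of some $T_E^{\pm1}$. Write $\Gamma\subseteq\Aut(\simpleTotalDsupp)$ for the subgroup generated by all the $T_E$ and set $\Omega=\bigcup_{\Phi\in\Gamma}\Phi(\overline{\totalGeometricChamber})$; the goal is the equality $\Omega=\totalStabDagger$. First I would verify $\Omega\subseteq\totalStabDagger$ by induction on the word length of $\Phi$ in the generators $T_E^{\pm1}$. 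The base case $\overline{\totalGeometricChamber}\subseteq\totalStabDagger$ holds because $\totalGeometricChamber\cong G\times\bC$ is connected while $\totalStabDagger$, being a connected component, is closed and already contains $\totalGeometricChamber$, hence its closure. For the inductive step, writing $\Phi=\Psi T_E^{\epsilon}$ with $\Psi$ shorter, I apply the homeomorphism $\Psi$ to the relation $\widetilde{W_E^{\pm}}=\overline{\totalGeometricChamber}\cap T_E^{\pm1}(\overline{\totalGeometricChamber})$ of Theorem~\ref{thm:total-space-boundary}(3); this shows that $\Psi(\overline{\totalGeometricChamber})$ and $\Phi(\overline{\totalGeometricChamber})$ meet along the nonempty codimension-one set $\Psi(\widetilde{W_E^{\pm}})$, so, both being connected with the former in $\totalStabDagger$, the latter lies in $\totalStabDagger$ as well.

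Next I would show that $\Omega$ is closed in $\totalStabDagger$. The essential input is that the family of all walls $\{\Phi(\widetilde{W_E^{+}}),\Phi(\widetilde{W_E^{-}})\mid\Phi\in\Gamma,\ E\text{ exceptional}\}$ is locally finite in $\totalStabDagger$, which follows from the support property: inside any fixed compact region only finitely many classes can destabilize, so only finitely many walls meet it. Granting this, $\Omega$ is a locally finite union of the closed chambers $\Phi(\overline{\totalGeometricChamber})$ and is therefore closed; local finiteness moreover endows $\totalStabDagger$ with a stratification whose codimension-one strata are the relative interiors of the walls and whose higher strata (the wall intersections) have real codimension at least two.

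It then remains to exclude $\totalStabDagger\setminus\Omega\neq\emptyset$. Assuming otherwise, I pick $\tau\notin\Omega$ and join it to a point of $\totalGeometricChamber\subseteq\Omega$ by a path in the connected manifold $\totalStabDagger$; since the wall-intersection locus has real codimension at least two, a generic such path avoids it and crosses the walls transversally, only in their relative interiors. Let $\sigma$ be the first point of the path lying in the closed set $\Omega$, say $\sigma\in\Phi(\overline{\totalGeometricChamber})$. It cannot lie in the open interior $\Phi(\totalGeometricChamber)$, for then the path would have entered $\Omega$ strictly earlier; hence $\sigma$ lies on the chamber boundary, i.e.\ on a single wall $\Phi(\widetilde{W_E^{\pm}})$. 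But by Theorem~\ref{thm:total-space-boundary} a neighborhood of such a relative-interior wall point is covered by the two adjacent chambers $\Phi(\overline{\totalGeometricChamber})$ and $\Phi T_E^{\pm1}(\overline{\totalGeometricChamber})$, both contained in $\Omega$, so a full neighborhood of $\sigma$ lies in $\Omega$---contradicting that the path was outside $\Omega$ just before $\sigma$. Thus $\Omega=\totalStabDagger$. The main obstacle throughout is the local finiteness of the walls in $\totalStabDagger$: once it is in hand, the closedness of $\Omega$ and the transversality used in the path argument are immediate, and the rest reduces to the ``no free wall'' content of the complete boundary description in Theorem~\ref{thm:total-space-boundary} (in the present setting the statement is quoted from \cite[Corollary~5.2]{bayer-macri}, where this finiteness is established for the local projective plane).
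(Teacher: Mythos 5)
Your argument is correct and is essentially the standard one: the paper itself quotes this statement from Bayer--Macr\`i without proof, and in the analogous fiber statement (Corollary \ref{cor:stab-dagger-is-covered-by-translates-of-geometric-chamber}) it explicitly points to the same open--closed/wall-crossing argument via the complete boundary description of Theorem \ref{thm:total-space-boundary} together with local finiteness of walls. Your path-based variant of the connectedness step is a cosmetic reorganization of that argument, not a different route.
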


We also have the following.
\begin{theorem}[{\cite[Theorem 7.1]{bayer-macri}}]
    $\totalStabDagger$ is simply connected.
\end{theorem}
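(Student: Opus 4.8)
The plan is to prove simple connectedness by covering $\totalStabDagger$ with the translates $\Phi(\overline{\totalGeometricChamber})$ of the closed geometric chamber and running a van Kampen argument over this cover. First I would record that each chamber is simply connected. By Theorem \ref{thm:total-space-geometric-chamber} we have $\totalGeometricChamber \cong G \times \bC$, so it suffices to treat $G$. Its defining region in $\{(a,b) \mid \Im a > 0\}$ is the $f$-preimage of the set of points lying strictly below the graph $y = \dpFunction(x)$ and below the discrete set $\{(\slopeOfE, \discriminantOfE)\}_E$. Since $f$ is a trivial fibration with contractible fibers over its image (fixing the target fixes $b$ and leaves $\Re a \in \bR$, $\Im a \in \bR_{>0}$ free), and since the planar region below $y = \dpFunction(x)$ with the vertical slits over each $(\slopeOfE, \discriminantOfE)$ removed is contractible (each slit abuts the boundary curve, so deleting it creates no loop), the region $G$ is contractible. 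Applying the homeomorphism $\Phi$ shows every chamber $\Phi(\overline{\totalGeometricChamber})$ is simply connected. Using freeness of the $\bC$-action one may first pass to $\totalStabDagger/\bC$, which is a homotopy equivalence, to reduce the real dimension; this is optional but convenient.

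Next I would analyze the adjacency pattern of the cover using Theorem \ref{thm:total-space-boundary}. The boundary $\partial\totalGeometricChamber = \bigcup_E(\widetilde{W_E^+} \sqcup \widetilde{W_E^-})$ decomposes into connected codimension-one walls, and crossing $\widetilde{W_E^{\pm}}$ passes from $\overline{\totalGeometricChamber}$ to $\sphericalTwistE^{\pm 1}(\overline{\totalGeometricChamber})$, the two chambers meeting exactly along that wall since $\widetilde{W_E^+} = \overline{\totalGeometricChamber} \cap \sphericalTwistE(\overline{\totalGeometricChamber})$ and similarly for the minus wall. Thus the chambers are indexed by the orbit of $\overline{\totalGeometricChamber}$ under the group $\Gamma$ generated by the $\sphericalTwistE$, two chambers are codimension-one adjacent precisely when their indices differ by a generator $\sphericalTwistE^{\pm 1}$, and the resulting adjacency graph is a quotient of the Cayley graph of $\Gamma$ (a quotient, to account for any autoequivalences stabilizing the chamber). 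Because each wall is connected, the pairwise intersections of chambers are connected, which is exactly the hypothesis needed to feed the cover into van Kampen.

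I would then compute $\pi_1(\totalStabDagger)$ from this cover. With simply connected pieces and connected pairwise overlaps, van Kampen presents $\pi_1(\totalStabDagger)$ by generators indexed by the wall crossings modulo relations supplied by the triple and higher overlaps, that is, by the codimension-two and deeper strata of the wall arrangement. The content is that these local relations coincide with a complete set of defining relations for $\Gamma$ in terms of the $\sphericalTwistE$: at each codimension-two stratum, where two walls $\widetilde{W_E^{\bullet}}$ and $\widetilde{W_F^{\bullet}}$ meet, the Jordan--H\"older behaviour records a spherical-twist identity, and these identities should assemble into the relations that realize $\totalStabDagger$ as the universal cover of the presentation $2$-complex of $\Gamma$ (equivalently, of the quotient of $\totalStabDagger$ by $\Gamma$ and the $\bC$-action, the stringy K\"ahler moduli space, a thrice-punctured-sphere-type orbifold). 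Once every loop is shown to bound a disc built from these local relations, $\pi_1(\totalStabDagger) = 1$ follows.

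The main obstacle is precisely the codimension-two analysis. One must enumerate which pairs of walls actually meet, determine the local structure of $\totalStabDagger$ near each such stratum (through the Jordan--H\"older filtrations and the mutation relating the relevant neighbouring exceptional bundles), and verify that the induced spherical-twist relations are \emph{complete}, leaving no residual $\pi_1$. Controlling this is best done by the global input that the quotient by $\Gamma$ and by $\bC$ is the explicit K\"ahler moduli space with known fundamental group, so that the covering $\totalStabDagger \to \totalStabDagger/(\Gamma,\bC)$ can be identified with the universal one. The accumulation of walls as the curve $y = \dpFunction(x)$ is approached must also be handled, since it obstructs any naive finite induction on the number of chambers; a local finiteness statement for the wall-and-chamber structure is what keeps the van Kampen bookkeeping valid.
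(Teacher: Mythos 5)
The paper does not prove this statement at all: it is imported verbatim as \cite[Theorem 7.1]{bayer-macri}, so the only ``proof'' in the paper is the citation. Your proposal is therefore an attempt to reprove a quoted external result, and as it stands it is a roadmap rather than a proof. The first step is fine: $\totalGeometricChamber \cong G \times \bC$, the map $f$ is a trivial fibration with fiber the upper half-plane in $a$, and the planar region below $y = \dpFunction(x)$ with the vertical slits over the points $(\slopeOfE, \discriminantOfE)$ deleted is vertically star-shaped and hence contractible (a loop cannot encircle a slit whose top end abuts the boundary curve). The adjacency description via $\widetilde{W_E^{\pm}} = \overline{\totalGeometricChamber} \cap \sphericalTwistE^{\pm 1}(\overline{\totalGeometricChamber})$ is also consistent with Theorem \ref{thm:total-space-boundary}.

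The genuine gap is that your third paragraph asserts, but does not establish, exactly the content of the theorem. Saying that the codimension-two strata ``should assemble into the relations that realize $\totalStabDagger$ as the universal cover of the presentation $2$-complex of $\Gamma$'' is the whole difficulty: van Kampen over this cover yields $\pi_1(\totalStabDagger)$ as generators from wall crossings modulo relations from deeper strata, and showing this group is trivial requires (i) an actual enumeration of which walls meet in codimension two and the local picture there, (ii) a proof that the resulting spherical-twist identities kill every generator, and (iii) control of the accumulation of walls as $g(a,b) \to \dpFunction(B(a,b))$ so that a loop only meets finitely many chambers. You flag all three as ``obstacles'' without resolving any of them. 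Moreover, the fallback you propose --- that the quotient by $\Gamma$ and $\bC$ is an explicit K\"ahler moduli space with known $\pi_1$, so the covering is universal --- is circular: identifying $\totalStabDagger$ as the \emph{universal} cover of that quotient is equivalent to the simple connectedness you are trying to prove, unless you independently show the covering is simply connected or compute the deck group and match it against $\pi_1$ of the base, neither of which you do. Note also that the actual argument in the cited reference exploits the tree structure of exceptional bundles on $\bP^2$ (via mutations), which makes the adjacency pattern of chambers acyclic; your proposal never brings in this combinatorial input, and without it the ``completeness of relations'' step has no purchase. As written, the proposal does not prove the statement.
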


\subsection{Wall and chamber structure on $\Stab(\fiberDsupp)$}\label{subsec:wall-and-chamber-structure-on-fiber}
Following the previous subsection, we say that a stability condition $\sigma \in \Stab(\fiberDsupp)$ is \emph{geometric} if the skyscraper sheaves $\cO_x$ for all $x \in \cptComponent$ are $\sigma$-stable of the same phase, and denote the set of all geometric stability conditions on $\fiberDsupp$ by $\geometricChamber$.
By its definition and Lemma \ref{lem:lemma-on-inducing-stability-condition} (2), the map $\theta$ in Section \ref{subsec:comparison-isomorphism} induces an isomorphism between $\totalGeometricChamber$ and $\geometricChamber$, and hence between $\totalStabDagger$ and $\stabDagger$ where the latter is the connected component of $\Stab(\fiberDsupp)$ containing $\geometricChamber$.

\begin{corollary}\label{cor:fiber-stab-dagger-simply-connected}
    $\stabDagger$ is simply connected.
\end{corollary}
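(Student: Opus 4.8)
The plan is to obtain the simple connectedness of $\stabDagger$ as an immediate consequence of the comparison isomorphism, transporting the corresponding property of $\totalStabDagger$. Simple connectedness is preserved under homeomorphisms, so once we exhibit a homeomorphism $\totalStabDagger \xrightarrow{\sim} \stabDagger$, the result follows from the Bayer--Macr\`i theorem that $\totalStabDagger$ is simply connected.

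First I would check that $\theta$ sends the geometric locus to the geometric locus, i.e.\ $\theta(\totalGeometricChamber) = \geometricChamber$. By definition a stability condition on $\simpleTotalDsupp$ (resp.\ on $\fiberDsupp$) is geometric precisely when every skyscraper $\cO_x$ with $x \in \cptComponent$ is stable of a common phase. Since $\closedImmersion_*\cO_x$ is again the skyscraper $\cO_x$ on $\totalSpace$, Lemma \ref{lem:lemma-on-inducing-stability-condition}~(2) shows that $\cO_x$ is $\theta(\sigma)$-stable of phase $\phi$ if and only if $\cO_x$ is $\sigma$-stable of phase $\phi$; this matches the two geometric conditions and gives the claimed equality.

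Next I would apply Theorem \ref{thm:comparison-isomorphism} with $\Stab^{\circ}(\simpleTotalDsupp) = \totalStabDagger$. It yields an isomorphism of complex manifolds from $\totalStabDagger$ onto the connected component of $\Stab(\fiberDsupp)$ containing $\theta(\totalGeometricChamber)$. By the previous paragraph this component contains $\geometricChamber$, so it is exactly $\stabDagger$. Hence $\theta$ restricts to a biholomorphism, in particular a homeomorphism, $\totalStabDagger \xrightarrow{\sim} \stabDagger$.

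Finally, invoking the simple connectedness of $\totalStabDagger$, the homeomorphism transports this property to $\stabDagger$. There is no genuine obstacle here: all the substantive work lies in the construction and properties of $\theta$, which are already established in Theorem \ref{thm:comparison-isomorphism} and Lemma \ref{lem:lemma-on-inducing-stability-condition}. The only point requiring care is the matching of connected components under $\theta$, which is precisely what the second and third paragraphs record.
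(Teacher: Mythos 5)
Your proposal is correct and takes essentially the same route as the paper: the paper also identifies $\geometricChamber$ with $\totalGeometricChamber$ via Lemma \ref{lem:lemma-on-inducing-stability-condition}~(2), concludes that $\comparisonIso$ restricts to an isomorphism $\totalStabDagger \xrightarrow{\sim} \stabDagger$ by Theorem \ref{thm:comparison-isomorphism}, and transports the simple connectedness of $\totalStabDagger$ from Bayer--Macr\`i. The only cosmetic remark is that to see the target component contains all of $\geometricChamber$ (not just $\comparisonIso(\totalGeometricChamber)$) one should note either the surjectivity onto $\geometricChamber$ or the connectedness of $\geometricChamber$, both of which are supplied by Theorem \ref{thm:fiber-geometric-chamber}.
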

We also consider slope-stability for purely 2-dimensional sheaves in $\fiberCohSupp = \Coh_{\cptComponent}(\fiber)$.
Let $(\cT_B, \cF_B)$ be a torsion pair of $\fiberCohSupp$ as follows:
\begin{enumerate}
    \item $\cT_B$ is the extension-closed subcategory generated by slope-semistable sheaves $F$ with $\slopeOfF > B$ and torsion sheaves,
    \item $\cF_B$ is the extension-closed subcategory generated by slope-semistable sheaves $F$ with $\slopeOfF \leq B$.
\end{enumerate}
\begin{theorem}\label{thm:fiber-geometric-chamber}
    There is an isomorphism between $G \times \bC$ and $\geometricChamber$ given by
    \begin{equation}
        (a, b, \lambda) \mapsto (Z_{a, b}, \langle \cF_B[1], \cT_B \rangle) \cdot \lambda,
    \end{equation}
    where $B = -\frac{\Im b}{\Im a}$.
    Moreover, for any $\sigma \in \geometricChamber$ the objects $\cO_x[n]$ for $x \in \cptComponent$ and $n \in \bZ$ are the only $\sigma$-stable objects whose class in $K(\fiberDsupp)$ is equal to $\pm[\cO_x]$.
\end{theorem}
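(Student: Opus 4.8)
The plan is to deduce the statement from the total-space description (Theorem~\ref{thm:total-space-geometric-chamber}) by transporting it through the comparison isomorphism $\comparisonIso$. Since $\comparisonIso$ already restricts to an isomorphism $\totalGeometricChamber \xrightarrow{\sim} \geometricChamber$ and Theorem~\ref{thm:total-space-geometric-chamber} identifies $\totalGeometricChamber$ with $G \times \bC$ via $\sigma_{a,b} \cdot \lambda \leftrightarrow (a,b,\lambda)$, composing the two yields the desired bijection $G \times \bC \cong \geometricChamber$. What remains is to compute the explicit form of the composite, that is, to show that $\comparisonIso(\sigma_{a,b})$ has central charge $Z_{a,b}$ and heart $\langle \cF_B[1], \cT_B\rangle$, and that the $\bC$-factor is respected. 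The central charge is immediate: under the identification $K(\fiberDsupp) \cong K(\cptComponent) \cong K(\simpleTotalDsupp)$ induced by $\closedImmersion_*$ and $\cptComponentImmersion_*$ (Proposition~\ref{prop:K-groups}), the coordinate data $(c,d,r)$ is preserved, so $Z_{a,b} \circ \closedImmersion_* = Z_{a,b}$; compatibility with the $\bC$-action was noted after the construction of $\comparisonIso$.

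First I would establish the key compatibility of $\closedImmersion_*$ with slope-stability. Because $\closedImmersion \colon \fiber \hookrightarrow \totalSpace$ is a closed immersion, the pushforward $\closedImmersion_*$ is exact and fully faithful on coherent sheaves and preserves the slope $\slopeOfF = d(F)/r(F)$. The crucial point is that it also preserves slope-semistability: any subsheaf of $\closedImmersion_* F$ in $\Coh_{\cptComponent}(\totalSpace)$ is annihilated by the ideal sheaf $\mathcal{I}_{\fiber}$ (since $\closedImmersion_* F$ is), hence is of the form $\closedImmersion_* F'$ for a unique subsheaf $F' \subseteq F$; thus destabilizing subsheaves on the two sides correspond exactly, and $F$ slope-semistable implies $\closedImmersion_* F$ slope-semistable. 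Combined with the fact that $\closedImmersion_*$ sends torsion sheaves to torsion sheaves, this gives $\closedImmersion_*(\cT_B) \subseteq \widetilde{\cT_B}$ and $\closedImmersion_*(\cF_B) \subseteq \widetilde{\cF_B}$.

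The heart computation then proceeds as follows. By construction the heart of $\comparisonIso(\sigma_{a,b})$ is $\cA' = \{E \in \fiberDsupp \mid \closedImmersion_* E \in \langle \widetilde{\cF_B}[1], \widetilde{\cT_B}\rangle\}$. Using exactness of $\closedImmersion_*$ (so that it commutes with $[1]$) together with the inclusions just established, every object of $\cT_B$ and of $\cF_B[1]$ lies in $\cA'$; since $\cA'$ is extension-closed, the tilted heart $\langle \cF_B[1], \cT_B\rangle$ is contained in $\cA'$. Both $\langle \cF_B[1], \cT_B\rangle$ (a tilt of $\fiberCohSupp$) and $\cA'$ (the heart of the induced stability condition) are hearts of bounded t-structures on $\fiberDsupp$, and a containment of two bounded hearts forces equality; hence $\cA' = \langle \cF_B[1], \cT_B\rangle$, as desired.

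For the final assertion I would transport the corresponding statement of Theorem~\ref{thm:total-space-geometric-chamber} across $\comparisonIso$ using Lemma~\ref{lem:lemma-on-inducing-stability-condition}(2). After a shift we may assume a $\sigma$-stable object $S$ of class $\pm[\cO_x]$ lies in the heart $\cA'$; then $\closedImmersion_* S$ is $\sigma$-stable on $\simpleTotalDsupp$ of the same class, so by Theorem~\ref{thm:total-space-geometric-chamber} it is isomorphic to $\cO_x[n]$ on $\totalSpace$ for some $n$. Since $\closedImmersion_* \cO_x = \cO_x$ for the skyscraper (so $\cO_x[n]$ also lies in $\cA'$) and $\closedImmersion_*$ is fully faithful on the hearts (Lemma~\ref{lem:lemma-on-inducing-stability-condition}(1)), it reflects this isomorphism, giving $S \cong \cO_x[n]$ on $\fiber$. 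The main obstacle is the heart identification, and within it the slope-semistability correspondence under $\closedImmersion_*$; once that is in place, the ``two bounded hearts'' argument and the transport of stable objects are formal.
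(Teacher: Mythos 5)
Your proposal is correct and follows essentially the same route as the paper: transport Theorem~\ref{thm:total-space-geometric-chamber} through $\comparisonIso$, verify the containment $\langle \cF_B[1], \cT_B\rangle \subseteq \cP'((0,1])$ (which the paper dispatches with ``by definition of slope'' and you usefully unpack via the correspondence of subsheaves under $\closedImmersion_*$), conclude equality because both are bounded hearts, and deduce the statement about stable objects of class $\pm[\cO_x]$ from Lemma~\ref{lem:lemma-on-inducing-stability-condition}(2). No gaps; your version is just a more detailed writeup of the paper's argument.
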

\begin{proof}
    By definition of $\theta$ and Theorem \ref{thm:total-space-geometric-chamber}, $\geometricChamber$ consists of the stability conditions of the form $\theta(\sigma_{a, b}) \cdot \lambda$ for some $(a, b) \in G$ and $\lambda \in \bC$.
    Let $\theta(\sigma_{a, b}) = (Z', \cP')$.
    We have an inclusion $\cP'((0, 1]) \supset \langle \cF_B[1], \cT_B \rangle$ by definition of slope.
    Since both sides are hearts of bounded t-structures, the inclusion is an equality.
    The rest follows from the second part of Theorem \ref{thm:total-space-geometric-chamber} and Lemma \ref{lem:lemma-on-inducing-stability-condition} (2).
\end{proof}

The next observation is crucial to describe the boundary $\partial \geometricChamber$.
\begin{lemma}\label{lem:half-spherical-twist-compatibility-on-stab}
    Let $E$ be an exceptional bundle on $\cptComponent$.
    Then the following diagram commutes:
    \begin{equation}
        \begin{tikzcd}
            \Stab(\simpleTotalDsupp) \ar[r, "\comparisonIso"] \ar[d, "T_{E}"'] & \Stab(\fiberDsupp) \ar[d, "H_{E}"] \\
            \Stab(\simpleTotalDsupp) \ar[r, "\comparisonIso"]                                       & \Stab(\fiberDsupp)
        \end{tikzcd}
    \end{equation}
\end{lemma}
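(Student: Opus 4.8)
The plan is to verify the commutativity pointwise by unwinding the definition of $\comparisonIso$ together with the formula for the left $\Aut$-action $\Phi \cdot (Z, \cP) = (Z \circ \Phi, \{\Phi(\cP(\phi))\}_\phi)$. The one input that makes everything work is the left-hand square of Proposition~\ref{prop:half-spherical-twist-compatibility}, applied to the half-spherical object $\cptComponentImmersion_*E$, which yields a functorial isomorphism
\begin{equation}
    \closedImmersion_* \circ \halfTwistE \cong \sphericalTwistE \circ \closedImmersion_* \colon \fiberDsupp \to \simpleTotalDsupp.
\end{equation}
Fixing $\sigma = (Z, \cP) \in \Stab(\simpleTotalDsupp)$, the two composites in the diagram send $\sigma$ to $\comparisonIso(\sphericalTwistE \cdot \sigma)$ and to $\halfTwistE \cdot \comparisonIso(\sigma)$ respectively, and I would prove these agree by comparing central charges and slicings separately.

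For the central charges, $\comparisonIso(\sphericalTwistE \cdot \sigma)$ has central charge $Z \circ \sphericalTwistE \circ \closedImmersion_*$, while $\halfTwistE \cdot \comparisonIso(\sigma)$ has central charge $(Z \circ \closedImmersion_*) \circ \halfTwistE$. These coincide because the displayed isomorphism of functors induces equal maps on Grothendieck groups, so that $Z \circ \closedImmersion_* \circ \halfTwistE = Z \circ \sphericalTwistE \circ \closedImmersion_*$ on $K(\fiberDsupp)$. For the slicings, the $\phi$-component of the slicing of $\comparisonIso(\sphericalTwistE \cdot \sigma)$ is
\begin{equation}
    \{F \in \fiberDsupp \mid \closedImmersion_* F \in \sphericalTwistE(\cP(\phi))\}.
\end{equation}
Writing $F = \halfTwistE(G)$ and using $\closedImmersion_* \halfTwistE(G) \cong \sphericalTwistE(\closedImmersion_* G)$ together with the invertibility of $\sphericalTwistE$, the membership condition $\closedImmersion_* F \in \sphericalTwistE(\cP(\phi))$ is equivalent to $\closedImmersion_* G \in \cP(\phi)$, i.e.\ to $G$ lying in the $\phi$-component of the slicing of $\comparisonIso(\sigma)$. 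Hence this slice equals $\halfTwistE$ applied to the $\phi$-component of the slicing of $\comparisonIso(\sigma)$, which is precisely the slicing of $\halfTwistE \cdot \comparisonIso(\sigma)$.

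The argument is formal once the intertwining isomorphism is in hand, so the only point requiring care is that the identity from Proposition~\ref{prop:half-spherical-twist-compatibility} is used as an isomorphism of functors on all of $\fiberDsupp$, since I apply it to each object of a phase subcategory; this is legitimate because $\halfTwistE$ and $\sphericalTwistE$ are genuine autoequivalences and the square commutes up to natural isomorphism. I expect no further obstacle. As a consistency check one may note that, since $\comparisonIso$ commutes with the two $\bC$-actions, it would suffice to verify the identity after restricting to the natural $\bC$-slices $\nTotalGeometricChamber$ and its image; but the direct computation above needs no such reduction.
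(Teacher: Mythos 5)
Your proof is correct and takes essentially the same route as the paper: the paper's own proof is a one-line citation of the intertwining relation $\closedImmersion_* \circ \halfTwistE \cong \sphericalTwistE \circ \closedImmersion_*$ (Proposition \ref{prop:half-spherical-twist-compatibility}) together with the definition of $\comparisonIso$, and you have simply written out the pointwise verification on central charges and slicings that the paper leaves implicit.
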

\begin{proof}
    It follows from the relation $\closedImmersion_* \circ H_E \cong T_E \circ \closedImmersion_*$ (Proposition \ref{prop:half-spherical-twist-compatibility}) and the definition of $\comparisonIso$.
\end{proof}

For an exceptional bundle $E$ on $\cptComponent$, we denote the kernel of the evaluation map $\cptComponentImmersion_*E^{\oplus r(E)} = \Hom(\cptComponentImmersion_*E, \cO_x) \otimes \cptComponentImmersion_*E \to \cO_x$ by $E^x$.
Clearly, it satisfies $\closedImmersion_*E^x \cong \widetilde{E^x}$.
\begin{theorem}\label{thm:fiber-boundary}
    For every exceptional bundle $E$ on $\cptComponent$, there exist two codimension one walls $W_E^+$ and $W_E^-$ in $\partial \geometricChamber$ such that
    \begin{enumerate}
        \item $W_E^+$ consists of $\sigma = (Z, \cP)$ such that $\cptComponentImmersion_*E$ and $\cO_x$ for $x \in \cptComponent$ are $\sigma$-semistable of the same phase $\phi$, with $\cptComponentImmersion_*E$ being a subobject of $\cO_x$ in $\cP(\phi)$: at a general point of $W_E^+$, the Jordan-H\"older filtration of $\cO_x$ is given by
              \begin{equation}
                  \cptComponentImmersion_*E^{\oplus r(E)} \to \cO_x \to E^x[1] \xrightarrow{+1},
              \end{equation}
        \item $W_E^-$ consists of $\sigma = (Z, \cP)$ such that $\cptComponentImmersion_*E[2]$ and $\cO_x$ for $x \in \cptComponent$ are $\sigma$-semistable of the same phase $\phi$, with $\cptComponentImmersion_*E[2]$ being a quotient of $\cO_x$ in $\cP(\phi)$: at a general point of $W_E^-$, the Jordan-H\"older filtration of $\cO_x$ is given by
              \begin{equation}
                  H_E^{-1}(E^x[1]) \to \cO_x \to \cptComponentImmersion_*E^{\oplus r(E)}[2] \xrightarrow{+1},
              \end{equation}
        \item $W_E^+ = \overline{\geometricChamber} \cap H_E(\overline{\geometricChamber})$ and $W_E^- = \overline{\geometricChamber} \cap H_E^{-1}(\overline{\geometricChamber})$.
    \end{enumerate}
    Moreover, we have $\partial \geometricChamber = \bigcup_E (W_E^+ \sqcup W_E^-)$ where $E$ runs through all exceptional bundles on $\cptComponent$.
\end{theorem}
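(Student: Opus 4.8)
The plan is to transport the Bayer--Macr\`i description of $\partial \totalGeometricChamber$ (Theorem \ref{thm:total-space-boundary}) to the fiber via the comparison isomorphism $\comparisonIso$. Recall from Theorem \ref{thm:comparison-isomorphism} that $\comparisonIso \colon \totalStabDagger \to \stabDagger$ is an isomorphism of complex manifolds, and that it restricts to an isomorphism $\totalGeometricChamber \to \geometricChamber$ (as noted just before Corollary \ref{cor:fiber-stab-dagger-simply-connected}). I would therefore simply set
\begin{equation}
    W_E^+ = \comparisonIso(\widetilde{W_E^+}), \qquad W_E^- = \comparisonIso(\widetilde{W_E^-}).
\end{equation}
Since $\comparisonIso$ is a homeomorphism carrying $\totalGeometricChamber$ onto $\geometricChamber$, it carries the topological boundary $\partial \totalGeometricChamber$ onto $\partial \geometricChamber$ and, being an isomorphism of complex manifolds, sends real codimension one walls to real codimension one walls. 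The final decomposition $\partial \geometricChamber = \bigcup_E (W_E^+ \sqcup W_E^-)$ then follows at once from the corresponding decomposition $\partial \totalGeometricChamber = \bigcup_E (\widetilde{W_E^+} \sqcup \widetilde{W_E^-})$ in Theorem \ref{thm:total-space-boundary}.

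For item (3), I would use the commuting square of Lemma \ref{lem:half-spherical-twist-compatibility-on-stab}, namely $\comparisonIso \circ T_E = H_E \circ \comparisonIso$, together with the fact that a homeomorphism commutes with intersections and closures. Applying $\comparisonIso$ to the identity $\widetilde{W_E^+} = \overline{\totalGeometricChamber} \cap T_E(\overline{\totalGeometricChamber})$ gives
\begin{equation}
    W_E^+ = \comparisonIso(\overline{\totalGeometricChamber}) \cap \comparisonIso\bigl(T_E(\overline{\totalGeometricChamber})\bigr) = \overline{\geometricChamber} \cap H_E(\overline{\geometricChamber}),
\end{equation}
and identically $W_E^- = \overline{\geometricChamber} \cap H_E^{-1}(\overline{\geometricChamber})$ starting from $\widetilde{W_E^-} = \overline{\totalGeometricChamber} \cap T_E^{-1}(\overline{\totalGeometricChamber})$.

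For items (1) and (2) I would transport the Jordan--H\"older structure using Lemma \ref{lem:lemma-on-inducing-stability-condition}. By the definition of $\comparisonIso$ together with part (3) of that lemma, an object $F \in \fiberDsupp$ is $\comparisonIso(\widetilde{\sigma})$-semistable of phase $\phi$ if and only if $\closedImmersion_* F$ is $\widetilde{\sigma}$-semistable of phase $\phi$; moreover $\closedImmersion_*$ is fully faithful and exact on the hearts (part (1)), so it reflects the subobject/quotient structure within a fixed slice $\cP(\phi)$. Since $\closedImmersion_*\cptComponentImmersion_*E = j_*E$, $\closedImmersion_*\cO_x \cong \cO_x$, and $\closedImmersion_*E^x \cong \widetilde{E^x}$, the defining triangle $\cptComponentImmersion_*E^{\oplus r(E)} \to \cO_x \to E^x[1] \xrightarrow{+1}$ on $\fiber$ is carried by $\closedImmersion_*$ to the total-space triangle of Theorem \ref{thm:total-space-boundary}(1). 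Hence the semistability, equality of phases, and the subobject relation asserted in (1) at a point of $W_E^+$ all follow from the corresponding statements for its image in $\widetilde{W_E^+}$. For (2) one argues in the same way, additionally invoking $\closedImmersion_* \circ H_E^{-1} \cong T_E^{-1} \circ \closedImmersion_*$ (Proposition \ref{prop:half-spherical-twist-compatibility}) to identify $\closedImmersion_* H_E^{-1}(E^x[1])$ with $T_E^{-1}(\widetilde{E^x}[1])$.

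The topological half of the argument is immediate once $\comparisonIso$ is known to be an isomorphism, so the only step demanding genuine care is the descent of the Jordan--H\"older data in (1) and (2): I must be sure that ``being a subobject in $\cP(\phi)$'' and the precise shape of the filtration are reflected, not merely preserved, by $\closedImmersion_*$. This rests on $\closedImmersion_*$ identifying $\cP'(\phi)$ with a full abelian subcategory of $\cP(\phi)$ closed under the relevant short exact sequences; once the three terms $\cptComponentImmersion_*E^{\oplus r(E)}$, $\cO_x$, $E^x[1]$ are checked to lie in $\cP'(\phi)$ — which again follows by transporting semistability from their images — the exact sequence and the subobject relation descend verbatim. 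I do not anticipate a real obstruction here: the statement is in effect a corollary of the total-space case, and all the compatibilities of $\comparisonIso$ with $\closedImmersion_*$, $H_E$, and $T_E$ needed to make the transport rigorous are already in place.
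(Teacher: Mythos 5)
Your proposal is correct and follows essentially the same route as the paper, which likewise deduces the theorem by transporting Theorem \ref{thm:total-space-boundary} through $\comparisonIso$ using Lemma \ref{lem:lemma-on-inducing-stability-condition} and Lemma \ref{lem:half-spherical-twist-compatibility-on-stab} (together with $j_*E = \closedImmersion_*\cptComponentImmersion_*E$). Your added care about $\closedImmersion_*$ reflecting, not just preserving, the Jordan--H\"older data is exactly the content of Lemma \ref{lem:lemma-on-inducing-stability-condition}(1)--(3), so no further argument is needed.
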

\begin{proof}
    It follows from Lemma \ref{lem:lemma-on-inducing-stability-condition}, Theorem \ref{thm:total-space-boundary}, and Lemma \ref{lem:half-spherical-twist-compatibility-on-stab}.
    Note that $j_*E = \closedImmersion_* \cptComponentImmersion_*E$.
\end{proof}

By an argument similar to that in Theorem \ref{thm:fiber-geometric-chamber}, a general element of the wall $W_E^+$ is of the form $(Z_{a, b}, \langle \cF_E[1], \cT_E \rangle) \cdot \lambda$ for some $(a, b) \in \boundaryE$ and $\lambda \in \bC$, where $(\cT_E, \cF_E)$ is the torsion pair of $\fiberCohSupp$ defined as follows:
\begin{enumerate}
    \item $\cT_E$ is the extension-closed subcategory generated by $\cptComponentImmersion_*E$, slope-semistable sheaves $F$ with $\slopeOfF > \slopeOfE$, and torsion sheaves,
    \item $\cF_E$ is the extension-closed subcategory generated by slope-semistable sheaves $F$ with $\slopeOfF \leq \slopeOfE$ and $\Hom(\cptComponentImmersion_*E, F) = 0$.
\end{enumerate}
To check that they form a torsion pair, we can apply the proof of \cite[Proposition 5.6]{bayer-macri},
with $\Hom_{\fiber}(\cptComponentImmersion_*E, \cptComponentImmersion_*E) = \bC$ and $\Ext^1_{\fiber}(\cptComponentImmersion_*E, \cptComponentImmersion_*E) = 0$ (by Lemma \ref{lem:end-ring-of-half-spherical-objects} below) in place of $\Hom_{\totalSpace}(j_*E, j_*E) = \bC$ and $\Ext^1_{\totalSpace}(j_*E, j_*E) = 0$.

\begin{lemma}\label{lem:end-ring-of-half-spherical-objects}
    Let $F \in \fiberDsupp$ be a half-spherical object.
    Then we have
    \begin{equation}
        \Ext^m_{\fiber}(F, F) = \begin{cases}
            \bC & \text{if $m = 0$ or $2$}, \\
            0   & \text{if $m<0$ or $m=1$}.
        \end{cases}
    \end{equation}
\end{lemma}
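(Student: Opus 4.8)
The plan is to compare $\Ext^\bullet_\fiber(F,F)$ with the self-extension algebra of the spherical object $\closedImmersion_*F$ on the smooth threefold $\totalSpace$. Because $F$ is half-spherical, $\closedImmersion_*F$ is spherical (of dimension $3=\dim\totalSpace$), so
\begin{equation}
    \Ext^m_\totalSpace(\closedImmersion_*F,\closedImmersion_*F)=\begin{cases}\bC&m=0,3,\\0&\text{otherwise},\end{cases}
\end{equation}
and by the $(\closedImmersion^*,\closedImmersion_*)$-adjunction this group is $\Hom_\fiber(\closedImmersion^*\closedImmersion_*F,F[m])$. Thus the whole computation is controlled by the single object $\closedImmersion^*\closedImmersion_*F$, and the strategy is to extract $\Ext^\bullet_\fiber(F,F)$ from the spherical vanishing by understanding this object.

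First I would record that $\closedImmersion$ is the inclusion of a Cartier divisor whose normal bundle is trivial, since $N_{\fiber/\totalSpace}\cong\omega_\fiber\otimes(\omega_\totalSpace|_\fiber)^{-1}$ and both canonical bundles are trivial. Hence $\closedImmersion^*\closedImmersion_*F$ has exactly two nonzero cohomology sheaves, namely $F$ in degree $0$ and $F\otimes N^\vee\cong F$ in degree $-1$, which yields a distinguished triangle
\begin{equation}
    F[1]\to\closedImmersion^*\closedImmersion_*F\to F\xrightarrow{\ \delta\ }F[2],\qquad\delta\in\Ext^2_\fiber(F,F).
\end{equation}
Applying $\Hom_\fiber(-,F[m])$ and the adjunction then produces a long exact sequence
\begin{equation}
    \cdots\to\Ext^m_\fiber(F,F)\to\Ext^m_\totalSpace(\closedImmersion_*F,\closedImmersion_*F)\to\Ext^{m-1}_\fiber(F,F)\xrightarrow{\ \cup\delta\ }\Ext^{m+1}_\fiber(F,F)\to\cdots,
\end{equation}
whose connecting map is Yoneda multiplication by $\delta$. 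Equivalently, each $\Ext^m_\totalSpace(\closedImmersion_*F,\closedImmersion_*F)$ has a two-step filtration with subquotients $\Coker\big(\cup\delta\colon\Ext^{m-2}_\fiber\to\Ext^{m}_\fiber\big)$ and $\Ker\big(\cup\delta\colon\Ext^{m-1}_\fiber\to\Ext^{m+1}_\fiber\big)$.

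It remains to read off the low-degree terms by a bootstrap. Since $F$ is bounded, $\Ext^m_\fiber(F,F)=0$ for $|m|\gg0$; letting $m_0$ be the least degree with $\Ext^{m_0}_\fiber(F,F)\neq0$, the filtration in degree $m_0$ shows $\Ext^{m_0}_\totalSpace(\closedImmersion_*F,\closedImmersion_*F)\neq0$, so $m_0\in\{0,3\}$, and the nonvanishing of the $\totalSpace$-side in degree $0$ excludes $m_0=3$; hence $\Ext^{<0}_\fiber(F,F)=0$ and $\Ext^0_\fiber(F,F)\cong\bC$. Exactness in degree $1$ then gives $\Ext^1_\fiber(F,F)=0$ and forces $\cup\delta\colon\Ext^0_\fiber(F,F)\to\Ext^2_\fiber(F,F)$ to be injective, while exactness in degree $2$ (using $\Ext^1_\fiber=0$) forces it to be surjective; therefore it is an isomorphism and $\Ext^2_\fiber(F,F)\cong\Ext^0_\fiber(F,F)\cong\bC$. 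The delicate point, and the main obstacle, is the triangle itself: although $\closedImmersion_*\closedImmersion^*\closedImmersion_*F$ splits as $\closedImmersion_*F\oplus\closedImmersion_*F[1]$ in $D^b(\totalSpace)$ (the section cutting out $\fiber$ annihilates $\closedImmersion_*F$), this splitting does \emph{not} descend to $D^b(\fiber)$ because $\closedImmersion_*$ is not full; in fact the argument forces $\delta\neq0$. One must therefore run the long exact sequence honestly rather than decompose $\closedImmersion^*\closedImmersion_*F$ as a direct sum, and check that the two occurrences of $\cup\delta$ entering degrees $1$ and $2$ are pinned down precisely by the spherical vanishing.
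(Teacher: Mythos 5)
Your proposal is correct and follows essentially the same route as the paper's (much terser) proof: the triangle $F[1]\to \closedImmersion^*\closedImmersion_*F\to F\xrightarrow{+1}$, the long exact sequence obtained by applying $\Hom_{\fiber}(-,F)$ together with the adjunction $\closedImmersion^*\dashv\closedImmersion_*$, and the sphericality of $\closedImmersion_*F$ in $D^b_{\cptComponent}(\totalSpace)$. Your write-up simply makes explicit the bootstrap that the paper leaves to the reader.
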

\begin{proof}
    It follows from the exact triangle
    \begin{equation}
        F[1] \to \closedImmersion^*\closedImmersion_*F \to F \xrightarrow{+1},
    \end{equation}
    the induced long exact sequence of $\Hom_{\fiber}(-, F)$, and the fact that $\closedImmersion_*F$ is a spherical object in $\totalDsupp$.
\end{proof}

Finally, one has a covering of $\stabDagger$ by translates of $\overline{\geometricChamber}$ as follows.
\begin{corollary}\label{cor:stab-dagger-is-covered-by-translates-of-geometric-chamber}
    We have $\stabDagger = \bigcup_{\Phi} \Phi(\overline{\geometricChamber})$ where $\Phi$ runs through the subgroup of $\Aut(\fiberDsupp)$ generated by the half-twists $H_E$ for all exceptional bundles $E$ on $\cptComponent$.
\end{corollary}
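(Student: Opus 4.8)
The plan is to transport the total-space covering \cite[Corollary 5.2]{bayer-macri} (recalled above) to $\fiberDsupp$ along the comparison isomorphism $\comparisonIso$. By Theorem \ref{thm:comparison-isomorphism} and the discussion preceding Corollary \ref{cor:fiber-stab-dagger-simply-connected}, $\comparisonIso$ restricts to an isomorphism of complex manifolds $\totalStabDagger \xrightarrow{\sim} \stabDagger$ carrying $\totalGeometricChamber$ onto $\geometricChamber$; since $\comparisonIso$ is a homeomorphism of these components, it also carries the closure $\overline{\totalGeometricChamber}$ onto $\overline{\geometricChamber}$. Thus the only substantive point is to check that, under $\comparisonIso$, the transformation group generated by the $T_E$ acting on $\totalStabDagger$ is identified with the one generated by the $H_E$ acting on $\stabDagger$.

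I would handle the generators using Lemma \ref{lem:half-spherical-twist-compatibility-on-stab}, which supplies the commuting relation $\comparisonIso \circ T_E = H_E \circ \comparisonIso$ for each exceptional bundle $E$. Restricting to the relevant components and composing with $\comparisonIso^{-1}$ gives $H_E = \comparisonIso \circ T_E \circ \comparisonIso^{-1}$ as self-maps of $\stabDagger$. In particular each $H_E$ preserves $\stabDagger$, since each $T_E$ preserves $\totalStabDagger$ (as a generator of the group realizing the total-space covering). Because $\comparisonIso$ is a bijection of the two components, conjugation $\Phi \mapsto \comparisonIso \circ \Phi \circ \comparisonIso^{-1}$ is functorial with respect to composition of transformations, so it sends every word $T_{E_1}^{\pm 1} \circ \cdots \circ T_{E_k}^{\pm 1}$ to the corresponding word $H_{E_1}^{\pm 1} \circ \cdots \circ H_{E_k}^{\pm 1}$. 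Hence conjugation carries the transformation group generated by the $T_E$ isomorphically onto the one generated by the $H_E$, which is precisely the action on $\stabDagger$ of the subgroup of $\Aut(\fiberDsupp)$ generated by the $H_E$ in the statement.

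Assembling these facts, I would compute
\[
    \stabDagger = \comparisonIso(\totalStabDagger) = \comparisonIso\Big(\bigcup_{\Phi} \Phi(\overline{\totalGeometricChamber})\Big) = \bigcup_{\Phi} (\comparisonIso \circ \Phi \circ \comparisonIso^{-1})\big(\comparisonIso(\overline{\totalGeometricChamber})\big) = \bigcup_{\Psi} \Psi(\overline{\geometricChamber}),
\]
where $\Phi$ ranges over the subgroup generated by the $T_E$ and $\Psi = \comparisonIso \circ \Phi \circ \comparisonIso^{-1}$ correspondingly ranges over the subgroup generated by the $H_E$. This is the claimed equality.

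The argument is essentially formal once the preparatory results are available, so there is no single hard computation. The only point requiring genuine care is bookkeeping: one must confirm that the commuting square of Lemma \ref{lem:half-spherical-twist-compatibility-on-stab} is used at the level of self-maps of the stability spaces (where conjugation by a bijection is automatically a homomorphism), so that a word in the $T_E$ truly conjugates to the \emph{same} word in the $H_E$ rather than its reverse or inverse. Once this, together with the facts that $\comparisonIso$ carries closures to closures and that each $H_E$ preserves $\stabDagger$, is in place, the covering statement transports verbatim from \cite[Corollary 5.2]{bayer-macri}.
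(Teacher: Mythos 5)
Your proposal is correct and follows essentially the same route as the paper: the paper likewise deduces the covering by transporting the total-space statement (Theorem \ref{thm:total-space-boundary} and the covering of $\totalStabDagger$ from \cite[Corollary 5.2]{bayer-macri}) through the comparison isomorphism, using Lemma \ref{lem:half-spherical-twist-compatibility-on-stab} to identify the $T_E$-action with the $H_E$-action. Your extra bookkeeping about conjugation of words and preservation of closures is a correct elaboration of what the paper leaves implicit.
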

\begin{proof}
    It follows from Theorem \ref{thm:total-space-boundary} and Lemma \ref{lem:half-spherical-twist-compatibility-on-stab}.
    One can also prove it directly by the same argument as in \cite[Corollary 5.2]{bayer-macri} or \cite[Proposition 13.1]{MR2376815} using the description of the boundary $\partial \geometricChamber$.
\end{proof}

\section{Autoequivalence Group}\label{sec:autoequivalence-group}
We use the same notation for (half-)spherical twists as in Section \ref{sec:stability-conditions}.
For any autoequivalence $\Phi \in \Aut(\fiberD)$, we denote the induced autoequivalence of $\fiberDsupp$ (Proposition \ref{prop:auto-preserves-subcategories}) by the same letter $\Phi$.
Let $\Aut^\dagger(\fiberD)$ be the subgroup of $\Aut(\fiberD)$ consisting of autoequivalences $\Phi$ that preserve the connected component $\stabDagger \subset \Stab(\fiberDsupp)$.
The goal of this section is to prove the following theorem.
\begin{theorem}\label{thm:autoequivalence-group}
    \begin{equation}
        \Aut^\dagger(\fiberD) \cong \bZ \times \Gamma_1(3) \times \Aut(\fiber).
    \end{equation}
\end{theorem}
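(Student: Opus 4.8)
The plan is to produce explicit generators for $\Aut^\dagger(\fiberD)$, sort them into the three claimed factors, and then establish generation and directness by playing three structures against one another: the $\bC$-action and wall-and-chamber covering of $\stabDagger$, the comparison isomorphism $\comparisonIso$, and the induced action on $K(\fiberDsupp)\cong\bZ^3$. First I would check that the shift $[1]$, the half-twists $H_E$, the line-bundle twist $\beta = -\otimes\cO_{\fiber}(1)$, and the automorphisms $f_*$ (for $f\in\Aut(\fiber)$) all lie in $\Aut^\dagger(\fiberD)$. For $H_E$ this is Lemma \ref{lem:half-spherical-twist-compatibility-on-stab}, since $\comparisonIso$ intertwines $H_E$ with $\sphericalTwistE$, which preserves $\totalStabDagger$; a direct check shows $\comparisonIso$ likewise intertwines $-\otimes\cO_{\fiber}(1)$ with $-\otimes\cO_{\totalSpace}(1)$ (using $\closedImmersion_*(-\otimes\cO_{\fiber}(1))\cong(\closedImmersion_*-)\otimes\cO_{\totalSpace}(1)$), so $\beta$ preserves $\stabDagger$ as well. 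Finally each $f\in\Aut(\fiber)$ restricts to a toric automorphism of $\cptComponent\cong\bP^2$ (Proposition \ref{prop:automorphism-group-of-fiber}), hence acts trivially on $K(\fiberDsupp)$ and preserves slope-stability and the geometric heart; thus $f_*$ fixes every geometric stability condition and a fortiori preserves $\stabDagger$.

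Next I would pin down the subgroup $\langle\alpha,\beta\rangle$ with $\alpha = H_{\cO_{\cptComponent}}$. Example \ref{ex:half-spherical-twist-conjugation} gives $\beta H_E\beta^{-1}\cong H_{E\otimes\cO_{\cptComponent}(1)}$, and since $f_*\cptComponentImmersion_*\cO_{\cptComponent}\cong\cptComponentImmersion_*\cO_{\cptComponent}$ and $f^*\cO_{\fiber}(1)\cong\cO_{\fiber}(1)$ by ampleness, both $\alpha$ and $\beta$ commute with every $f_*$ and with the shift. On $K(\fiberDsupp)$ the diagram of Lemma \ref{lem:half-spherical-twist-compatibility-on-stab} computes $\alpha$ as the Seidel--Thomas reflection attached to $\sphericalTwistE$, so in the basis $([\cO_x],[\cO_l],[\cO_{\cptComponent}])$ the two generators act by
\[
\alpha=\begin{pmatrix}1&0&0\\0&1&0\\0&-3&1\end{pmatrix},\qquad
\beta=\begin{pmatrix}1&1&1\\0&1&1\\0&0&1\end{pmatrix},
\]
whence $(\alpha\beta)^3=\id$ on $K(\fiberDsupp)$. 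The Euler pairing is antisymmetric with radical $\langle[\cO_x]\rangle$, and on the symplectic quotient $\bZ^2$ the induced matrices $\left(\begin{smallmatrix}1&0\\-3&1\end{smallmatrix}\right)$, $\left(\begin{smallmatrix}1&1\\0&1\end{smallmatrix}\right)$ generate $\Gamma_1(3)$ faithfully. Since $\Gamma_1(3)\cong\bZ\ast(\bZ/3)$ with presentation $\langle\alpha,\beta\mid(\alpha\beta)^3\rangle$, the functorial relation $(\alpha\beta)^3\cong\id$ together with faithfulness on the symplectic quotient forces $\langle\alpha,\beta\rangle\cong\Gamma_1(3)$. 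It therefore remains to upgrade $(\alpha\beta)^3\cong\id$ from $K$-theory to an isomorphism of functors, which I would do at the level of integral kernels on $\fiber\times\fiber$; the vanishing of any residual shift is exactly where the half-spherical end-ring $\Ext^\ast(\cptComponentImmersion_*\cO_{\cptComponent},\cptComponentImmersion_*\cO_{\cptComponent})\cong\bC\oplus\bC[-2]$ (Lemma \ref{lem:end-ring-of-half-spherical-objects}) departs from the $3$-spherical behaviour on $\totalSpace$.

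For surjectivity, any $\Psi\in\Aut^\dagger(\fiberD)$ preserves $\stabDagger$ with its wall-and-chamber structure and so carries $\geometricChamber$ to another chamber; by Corollary \ref{cor:stab-dagger-is-covered-by-translates-of-geometric-chamber} every chamber is $\Phi(\geometricChamber)$ for some $\Phi$ in the group generated by all $H_E$, and the mutation calculus for exceptional bundles on $\cptComponent$ (transported through $\comparisonIso$) shows that this group is the index-three normal subgroup $\ker(\Gamma_1(3)\to\bZ/3)$ of $\langle\alpha,\beta\rangle$ generated by the conjugates $\beta^k\alpha\beta^{-k}$. Hence, after composing $\Psi$ with an element of $\Gamma_1(3)$ and a shift, I may assume $\Psi$ stabilizes $\geometricChamber$; by Theorem \ref{thm:fiber-geometric-chamber} such a $\Psi$ permutes $\{\cO_x[n]\}$, and together with Corollary \ref{cor:outside-skyscraper-sheaf} it sends every skyscraper on $\fiber$ to a skyscraper up to shift. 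Identifying such a $\Psi$ with $f_*$ for a unique $f\in\Aut(\fiber)$ (via Propositions \ref{prop:automorphism-group-of-fiber} and \ref{prop:picard-group-of-fiber}) then yields generation. Directness follows by separating the factors through their actions: $\Aut(\fiber)$ is central (it commutes with $\alpha,\beta$ and the shift) and equals the kernel of the action $\Aut^\dagger(\fiberD)\to\mathrm{Homeo}(\stabDagger)$, while $\bZ\times\Gamma_1(3)$ injects into the image via its faithful action on $\stabDagger$ and on the symplectic quotient of $K(\fiberDsupp)$; the explicit splitting constructed above upgrades the resulting central extension to the asserted direct product.

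The main obstacle is the reconstruction step that identifies a skyscraper-preserving $\Psi$ with a standard autoequivalence $(-\otimes L)\circ f_*\circ[m]$: because $\omega_{\fiber}$ is trivial the Bondal--Orlov theorem is unavailable, so this is not automatic. I expect to handle it by exploiting that the compact component $\cptComponent\cong\bP^2$ has ample anticanonical bundle, reducing via $\cptComponentImmersion_*$ to a Fourier--Mukai rigidity for objects supported on $\cptComponent$ and then controlling the gluing along the curves $E_i$ to promote the induced bijection of points to an element of $\Aut(\fiber)$; chamber-preservation then forces $L\cong\cO_{\fiber}$ and isolates the shift. A secondary difficulty, already flagged, is the purely functorial verification of $(\alpha\beta)^3\cong\id$: one must ensure that restricting kernels from $\totalSpace\times_{\baseSpace}\totalSpace$ to $\fiber\times\fiber$ leaves no surviving shift, which is precisely the feature distinguishing the half-spherical situation on $\fiber$ from the spherical one on $\totalSpace$.
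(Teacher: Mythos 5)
Your overall strategy matches the paper's: exhibit the generators, identify $\langle H_{\cO_{\cptComponent}},\,-\otimes\cO_{\fiber}(1)\rangle$ with $\Gamma_1(3)$ by combining a kernel-level relation with faithfulness on $K$-theory, use Corollary \ref{cor:stab-dagger-is-covered-by-translates-of-geometric-chamber} to reduce to an autoequivalence carrying one geometric stability condition to another, and then recognize such an autoequivalence as standard. However, the step you yourself flag as the ``main obstacle'' is genuinely incomplete as proposed, in two respects. First, any point-to-point rigidity statement (the paper's Lemma \ref{lem:criterion-to-be-standard-functor}) applies only to \emph{integral} functors, so you must first prove that every autoequivalence of $\fiberD$ is of Fourier--Mukai type; since $\fiber$ is singular and reducible this is not covered by Orlov's theorem, and the paper supplies it separately (Proposition \ref{prop:all-autoequivalences-are-FM-type}) via the strong uniqueness of the dg enhancement of $D^b(\fiber)$, established through the ample sequence $\{\cO_{\fiber}(l)\}_{l\in\bZ}$. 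Your sketch never addresses this. Second, before you can invoke Theorem \ref{thm:fiber-geometric-chamber} to conclude that $\Psi(\cO_x)$ is a shifted skyscraper, you need to know that $[\Psi(\cO_x)] = \pm[\cO_x]$ in $K(\fiberDsupp)$; stability of $\Psi(\cO_x)$ alone does not suffice. The paper proves this invariance (Lemma \ref{lem:autoeq-preserves-Ox-class}) by a non-obvious device: since Serre duality fails on $\fiber$, it characterizes $\bZ[\cO_x]$ as the radical of the skew-symmetric pairing $\chi_{\totalSpace}(\closedImmersion_*-,\closedImmersion_*-)$ inherited from the ambient Calabi--Yau threefold. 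Your proposal silently assumes this step.

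On the smaller points: the relation $(\alpha\beta)^3\cong\id$, which you leave as an open difficulty, is handled in the paper by quoting the known isomorphism of Fourier--Mukai kernels on $\totalSpace$ and using that restriction of kernels from $\totalSpace\times_{\baseSpace}\totalSpace$ to $\fiber\times\fiber$ is compatible with convolution; no ``residual shift'' can appear because the relation already holds on the nose upstairs. Your explicit $K$-theoretic matrices and the presentation $\Gamma_1(3)\cong\bZ/3*\bZ$ amount to the paper's injectivity argument, which is delegated to \cite[Proposition 8.2]{bayer-macri}. Your claim that the subgroup generated by all $H_E$ is the index-three normal closure of $\alpha$ is unsubstantiated and also unnecessary: the paper only needs that each $H_E$ lies in $\Gamma_1(3)$ (Lemma \ref{lem:half-spherical-twist-in-gamma1-3}) and that the translates of $\overline{\geometricChamber}$ under the group generated by the $H_E$ cover $\stabDagger$. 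Finally, for directness the paper does not argue that $\Aut(\fiber)$ is the kernel of the action on $\stabDagger$; it verifies the single needed commutation relation $f_*\circ H_{\cO_{\cptComponent}} \cong H_{\cO_{\cptComponent}}\circ f_*$ (Lemma \ref{lem:automorphism-commutes-with-half-spherical-twist}), which itself requires a separate argument for the diagonal $\bC^*$ in $\Aut(\fiber)$ using Lemma \ref{lem:criterion-to-be-standard-functor} and $\Pic(\fiber)\cong\bZ$, rather than the extension to $\totalSpace$ available for the other automorphisms.
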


We first show that the subgroup of $\Aut^\dagger(\fiberD)$ generated by the shifts, $\Aut(\fiber)$, $\Pic(\fiber)$, and the half-twists $H_E$ for all exceptional bundles $E$ on $\cptComponent$ is isomorphic to the right-hand side of the above isomorphism.
Note that one has $H_E \in \Aut^\dagger(\fiberD)$ by Corollary \ref{cor:stab-dagger-is-covered-by-translates-of-geometric-chamber}.

\begin{lemma}\label{lemma:gamma1-3}
    The subgroup of $\Aut^\dagger(\fiberD)$ generated by $H_{\cO_{\cptComponent}}$ and $- \otimes\cO_{\fiber}(1)$ is isomorphic to $\Gamma_1(3)$.
\end{lemma}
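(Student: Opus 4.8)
The plan is to exhibit $\langle \alpha, \beta \rangle$ simultaneously as a quotient of the abstract group $G = \langle X, Y \mid (XY)^3 \rangle$ and as a group surjecting onto $\Gamma_1(3)$ through its action on $K$-theory, and then to squeeze the two. For the group-theoretic input I would first record that, as a Fuchsian group of signature $(0;3;2)$, one has $\Gamma_1(3) \cong \bZ/3 \ast \bZ$, and that with respect to the elements $\bar{A} = \left(\begin{smallmatrix} 1 & 0 \\ -3 & 1 \end{smallmatrix}\right)$ and $\bar{B} = \left(\begin{smallmatrix} 1 & 1 \\ 0 & 1 \end{smallmatrix}\right)$ the product $\bar A \bar B = \left(\begin{smallmatrix} 1 & 1 \\ -3 & -2 \end{smallmatrix}\right)$ is elliptic of order $3$ while $\bar B$ is a parabolic generator. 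These two matrices generate $\Gamma_1(3)$ (verifiable by an index computation), and the only relation is $(\bar A \bar B)^3 = 1$, so $\Gamma_1(3)$ admits the presentation $\langle \bar A, \bar B \mid (\bar A \bar B)^3 \rangle$; equivalently the assignment $X \mapsto \bar A$, $Y \mapsto \bar B$ gives an isomorphism $G \xrightarrow{\sim} \Gamma_1(3)$.

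Next I would compute the induced actions on $K(\fiberDsupp) \cong K(\cptComponent) \cong \bZ^3$ in the basis $([\cO_x], [\cO_l], [\cO_{\cptComponent}])$. Since $\cO_{\fiber}(1)|_{\cptComponent} = \cO_{\bP^2}(1)$, the functor $\beta$ acts as $- \otimes \cO_{\bP^2}(1)$. For $\alpha$ I would use $\closedImmersion_* \circ H_{\cO_{\cptComponent}} \cong T_{\cO_{\cptComponent}} \circ \closedImmersion_*$ (Proposition~\ref{prop:half-spherical-twist-compatibility}) to identify the $K$-theoretic action of $\alpha$ with that of the spherical twist $T_{\cO_{\cptComponent}} = T_{j_* \cO_{\bP^2}}$ on $K(\simpleTotalDsupp)$, which a short Riemann--Roch computation evaluates, using that the normal bundle of $\cptComponent = \bP^2$ in $\totalSpace$ is $K_{\bP^2} = \cO_{\cptComponent}(-3)$. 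Both $\alpha$ and $\beta$ fix $[\cO_x]$, and on the quotient lattice $\bZ^2 = K(\fiberDsupp)/\bZ[\cO_x]$ (coordinates $(d,r)$) they act precisely by $\bar A$ and $\bar B$. This produces a surjection $g \colon \langle \alpha, \beta \rangle \twoheadrightarrow \Gamma_1(3)$ with $\alpha \mapsto \bar A$, $\beta \mapsto \bar B$; in particular $(\alpha\beta)^3$ acts trivially on $K$.

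The crux is to upgrade this to the \emph{functorial} relation $(\alpha\beta)^3 = \id$ in $\Aut(\fiberD)$. Because $\closedImmersion_*$ is not fully faithful (one has the triangle $F[1] \to \closedImmersion^* \closedImmersion_* F \to F$ from Lemma~\ref{lem:end-ring-of-half-spherical-objects}), this cannot be read off from a functor identity on $\totalSpace$, and I would instead argue at the level of Fourier--Mukai kernels. By construction $H_{\cO_{\cptComponent}}$ has kernel the restriction to $\fiber \times \fiber$ of the kernel of $T_{\cO_{\cptComponent}}$, and $- \otimes \cO_{\fiber}(1)$ is the restriction of $- \otimes \cO_{\totalSpace}(1)$; as restriction of kernels is compatible with convolution, the kernel of $(\alpha\beta)^3$ is the restriction of the kernel of $\tilde\gamma^3$, where $\tilde\gamma = T_{\cO_{\cptComponent}} \circ (- \otimes \cO_{\totalSpace}(1))$. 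I would then establish $\tilde\gamma^3 \cong \id$ on $\totalSpace$: directly, $\tilde\gamma$ cyclically permutes spherical objects, $j_*\cO_{\bP^2} \mapsto j_*\Omega^1_{\bP^2}(1)[1] \mapsto j_*\cO_{\bP^2}(-1)[2] \mapsto j_*\cO_{\bP^2}$, returning with no net shift after three steps, and conceptually $\tilde\gamma$ corresponds under the derived McKay equivalence $D^b(\totalSpace) \cong D^b_{\mu_3}(\bA^3)$ to tensoring by a generating character $\rho$ of $\mu_3$, so that $\tilde\gamma^3 \cong - \otimes \rho^{\otimes 3} = \id$ (cf.~\cite{bayer-macri}). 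Restricting the relative-diagonal kernel to $\fiber \times \fiber$ then yields $\cO_{\Delta_{\fiber}}$ — here it matters that $\fiber = \projectionMap^{-1}(0)$ is cut out by a nonzerodivisor, so no $\Tor$-correction appears — giving $(\alpha\beta)^3 = \id$.

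With the relation established, the assignment $X \mapsto \alpha$, $Y \mapsto \beta$ defines a surjection $f \colon G \twoheadrightarrow \langle \alpha, \beta \rangle$. The composite $g \circ f \colon G \to \Gamma_1(3)$ sends $X \mapsto \bar A$, $Y \mapsto \bar B$ and is hence the isomorphism $G \xrightarrow{\sim} \Gamma_1(3)$ from the first paragraph; therefore $f$ is injective, so it is an isomorphism, and $\langle \alpha, \beta \rangle \cong G \cong \Gamma_1(3)$. I expect the genuine obstacle to be the functorial relation $(\alpha\beta)^3 = \id$: the failure of $\closedImmersion_*$ to be fully faithful forces the argument to the level of kernels, which requires both pinning down $\tilde\gamma^3 \cong \id$ on $\totalSpace$ (for which the McKay description of $\tilde\gamma$ as a character twist is the cleanest route) and verifying that the kernel restriction to $\fiber \times \fiber$ introduces no correction terms.
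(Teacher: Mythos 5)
Your proposal is correct and follows essentially the same route as the paper: the relation $(\alpha\beta)^3\cong\id$ is established at the level of Fourier--Mukai kernels on the total space (the paper cites this; you supply the McKay/character-twist justification) and transported to $\fiber$ by restricting kernels, and injectivity is obtained from the faithfulness of the induced $\Gamma_1(3)$-action on $K(\fiberDsupp)\cong K(\simpleTotalDsupp)$ (which is exactly the content of the cited proof of Proposition~8.2 of \cite{bayer-macri} that you reprove with explicit matrices). The only difference is that you unpack the two citations rather than invoke them.
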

\begin{proof}
    By \cite[Section 7.3.5]{MR2405589}, the functors $\alpha = T_{\cO_{\cptComponent}}$ and $\beta = -\otimes\canonicalBundleProjection^*\cO_{\canonicalBundleBaseSpace}(1)$ satisfy the relation $(\alpha\beta)^3 \cong \id$ at the level of Fourier--Mukai kernels and their convolution.
    Then there exists a group homomorphism
    \begin{equation}
        \Gamma_1(3) \to \Aut^\dagger(\fiberD)
    \end{equation}
    which maps $\alpha$ to $H_{\cO_{\cptComponent}}$ and $\beta$ to $-\otimes\cO_{\fiber}(1)$ (see Example \ref{ex:half-spherical-twist-conjugation} and the discussion above it).
    On the other hand, by Proposition \ref{prop:auto-preserves-subcategories}, one has a natural action of $\Gamma_1(3)$ on $K(\fiberDsupp)$ through $H_{\cO_{\cptComponent}}$ and $-\otimes\cO_{\fiber}(1)$.
    Now, recall that we have an isomorphism $i_* \colon K(\fiberDsupp) \xrightarrow{\sim} K(\totalDsupp)$ and the relations of functors $\closedImmersion_* \circ H_{\cO_{\cptComponent}} = T_{\cO_{\cptComponent}} \circ \closedImmersion_*$, $\closedImmersion_* \circ (-\otimes\cO_{\fiber}(1)) = (-\otimes\canonicalBundleProjection^*\cO_{\canonicalBundleBaseSpace}(1)) \circ \closedImmersion_*$.
    They show that the induced actions of $H_{\cO_{\cptComponent}}$ and $-\otimes\cO_{\fiber}(1)$ on $K(\fiberDsupp)$ are the same as those of $T_{\cO_{\cptComponent}}$ and $-\otimes\canonicalBundleProjection^*\cO_{\canonicalBundleBaseSpace}(1)$ on $K(\totalDsupp)$, i.e.~there is a commutative diagram
    \begin{equation}
        \begin{tikzcd}
            \Gamma_1(3) \ar[r, equal]& \langle T_{\cO_{\cptComponent}}, -\otimes\canonicalBundleProjection^*\cO_{\canonicalBundleBaseSpace}(1) \rangle \ar[r, twoheadrightarrow] \ar[d]& \langle H_{\cO_{\cptComponent}},-\otimes \cO_{\fiber}(1) \rangle \ar[d] \ar[r, phantom, "\subset"] &\Aut^\dagger(\cD)\\
            & \Aut(K(\totalDsupp)) \ar[r, equal] &  \Aut(K(\fiberDsupp)) &\\
        \end{tikzcd}
    \end{equation}
    of groups.
    In addition, the proof of \cite[Proposition 8.2]{bayer-macri} implies that the map $\Gamma_1(3) = \langle T_{\cO_{\cptComponent}}, -\otimes\canonicalBundleProjection^*\cO_{\canonicalBundleBaseSpace}(1) \rangle \to \Aut(K(\totalDsupp))$ in the above diagram is injective.
    Therefore, the group $\langle H_{\cO_{\cptComponent}}, -\otimes\cO_{\fiber}(1) \rangle \subset \Aut^\dagger(\fiberD)$ is isomorphic to $\Gamma_1(3)$.
\end{proof}

\begin{lemma}\label{lem:half-spherical-twist-in-gamma1-3}
    For any exceptional bundle $E$ on $\cptComponent$, the half-spherical twist $\halfTwistE$ is contained in the subgroup $\Gamma_1(3) = \langle H_{\cO_{\cptComponent}}, -\otimes\cO_{\fiber}(1) \rangle \subset \Aut^\dagger(\fiberD)$.
\end{lemma}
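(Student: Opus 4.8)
The plan is to reduce the statement to the analogous fact for ordinary spherical twists on the total space $\totalSpace$, and then transport it to $\fiberDsupp$ through the kernel-level compatibility between $T_E$ and $\halfTwistE$. Concretely, I would first show that $T_E \in \langle T_{\cO_{\cptComponent}},\, \beta\rangle$ for every exceptional bundle $E$ on $\cptComponent$, where $\beta = -\otimes\canonicalBundleProjection^*\cO_{\canonicalBundleBaseSpace}(1)$, and then use the restriction of relative Fourier--Mukai kernels from $\totalSpace\times_{\baseSpace}\totalSpace$ to $\fiber\times\fiber$ to convert this into the desired membership $\halfTwistE \in \langle H_{\cO_{\cptComponent}},\, -\otimes\cO_{\fiber}(1)\rangle$.

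For the total-space step, I would invoke the mutation theory of exceptional bundles on $\cptComponent \cong \bP^2$: every exceptional bundle arises from $\cO_{\cptComponent}$ by a finite sequence of mutations inside full exceptional collections, starting from the standard collection $(\cO_{\cptComponent}, \cO_{\cptComponent}(1), \cO_{\cptComponent}(2))$. On the Calabi--Yau threefold $\totalSpace$, mutations of the objects $j_*E$ are governed by the spherical twists $T_{j_*A}$, so that every spherical object $j_*E$ lies in the orbit of $j_*\cO_{\cptComponent}$ under the group generated by the twists $T_{\cO_{\cptComponent}(k)}$ and shifts. Combining this with the conjugation identity $T_{\Phi(S)} \cong \Phi\, T_S\, \Phi^{-1}$ for a spherical object $S$ and an autoequivalence $\Phi$ (\cite[Lemma 2.11]{MR1831820}), together with $T_{\cO_{\cptComponent}(k)} = \beta^{k}\, T_{\cO_{\cptComponent}}\, \beta^{-k}$, an induction on the number of mutations yields $T_E \in \langle T_{\cO_{\cptComponent}},\, \beta\rangle$. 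The bookkeeping of this induction is carried out in \cite[Proposition 8.2]{bayer-macri}, which I would cite for the precise realization of mutations by spherical twists on $\totalSpace$.

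To descend to the fiber, I would use that all the functors involved are relative Fourier--Mukai functors over $\baseSpace$, so the relation expressing $T_E$ as a word $w(T_{\cO_{\cptComponent}}, \beta)$ holds at the level of kernels in $D(\QCoh(\totalSpace\times_{\baseSpace}\totalSpace))$. As recorded after Proposition \ref{prop:half-spherical-twist-compatibility}, restriction of kernels to $\fiber\times\fiber$ is compatible with convolution, and it sends $T_E$ to $\halfTwistE$, $T_{\cO_{\cptComponent}}$ to $H_{\cO_{\cptComponent}}$, and $\beta$ to $-\otimes\cO_{\fiber}(1)$ (Example \ref{ex:half-spherical-twist-conjugation}). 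Applying the restriction to the word $w$ therefore gives $\halfTwistE \cong w(H_{\cO_{\cptComponent}},\, -\otimes\cO_{\fiber}(1))$, so that $\halfTwistE \in \langle H_{\cO_{\cptComponent}},\, -\otimes\cO_{\fiber}(1)\rangle = \Gamma_1(3)$ by Lemma \ref{lemma:gamma1-3}.

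The main obstacle is the total-space step, and specifically making the passage from functor-level to kernel-level relations airtight: I must ensure that the word expressing $T_E$ is valid as an isomorphism of relative Fourier--Mukai kernels over $\baseSpace$ (not merely as an isomorphism of endofunctors of $\simpleTotalDsupp$), since only kernel-level relations survive restriction to $\fiber\times\fiber$. This requires that each mutation in the induction produces again a spherical object of the form $j_*E'$ for an exceptional bundle $E'$ on $\cptComponent$ --- which holds because mutations of exceptional bundles on $\bP^2$ are shifts of exceptional bundles --- and that the twists $T_{\cO_{\cptComponent}(k)}$ and $\beta$ are relative over $\baseSpace$ with kernels to which restriction applies; both points are already available from the setup and from \cite{bayer-macri}.
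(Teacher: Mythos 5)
Your proposal is correct and follows essentially the same route as the paper: the paper's proof simply invokes the corresponding total-space result (\cite[Lemma 8.3]{bayer-macri}, which realizes $T_E$ as a word in $T_{\cO_{\cptComponent}}$ and $\beta$ via mutations of exceptional bundles) and then restricts the kernel-level relation to $\fiber \times \fiber$ exactly as in Example \ref{ex:half-spherical-twist-conjugation}. The only cosmetic difference is the citation: the paper points to \cite[Lemma 8.3]{bayer-macri} for the total-space membership, whereas you cite Proposition 8.2 for the mutation bookkeeping, but the substance is identical.
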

\begin{proof}
    This follows from the corresponding result \cite[Lemma 8.3]{bayer-macri} on $\totalSpace$, after restricting to the fiber.
    See the discussion in Example \ref{ex:half-spherical-twist-conjugation}.
\end{proof}

\begin{lemma}\label{lem:automorphism-commutes-with-half-spherical-twist}
    For any automorphism $f \in \Aut(\fiber)$, the induced autoequivalence $f_* \in \Aut^\dagger(\fiberD)$ commutes with the half-spherical twist $H_{\cO_{\cptComponent}}$.
\end{lemma}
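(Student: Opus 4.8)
The plan is to reduce the claim to two facts: the half-spherical conjugation formula and the observation that $f$ fixes the relevant half-spherical object. Concretely, it suffices to establish
\begin{equation}
    f_* \circ H_{\cO_{\cptComponent}} \circ (f_*)^{-1} \cong H_{f_*\cO_{\cptComponent}}
    \qquad\text{and}\qquad
    f_*\bigl(\cptComponentImmersion_*\cO_{\cptComponent}\bigr) \cong \cptComponentImmersion_*\cO_{\cptComponent},
\end{equation}
where the subscript of $H$ denotes the associated half-spherical object $\cptComponentImmersion_*(-)$; combining the two yields $f_* \circ H_{\cO_{\cptComponent}} \circ (f_*)^{-1} \cong H_{\cO_{\cptComponent}}$, which is the assertion. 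The second isomorphism is straightforward: since $\cptComponent \cong \bP^2$ is the unique compact irreducible component of $\fiber$, every automorphism preserves $\cptComponent$ and restricts to some $f\vert_{\cptComponent} \in \Aut(\bP^2)$, and the pushforward of the structure sheaf along an automorphism is again the structure sheaf, so $f_*\cptComponentImmersion_*\cO_{\cptComponent} = \cptComponentImmersion_*(f\vert_{\cptComponent})_*\cO_{\cptComponent} \cong \cptComponentImmersion_*\cO_{\cptComponent}$.

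For the conjugation formula, I would first dispose of the easy case: when $f$ lies in the subgroup $S_3 \ltimes \{(\lambda_1,\lambda_2,\lambda_3) \mid \lambda_1\lambda_2\lambda_3 = 1\}$, it extends to an automorphism of $\totalSpace$ over $\baseSpace$ by Proposition \ref{prop:automorphism-of-fiber-extends-to-total-space}, and the formula is precisely Example \ref{ex:half-spherical-twist-conjugation}. For a general $f$ I would extend it to some $\widetilde f \in \Aut(\totalSpace)$, again by Proposition \ref{prop:automorphism-of-fiber-extends-to-total-space}. Such an $\widetilde f$ need not lie over $\baseSpace$: it covers a scaling $t \mapsto c\,t$ of $\baseSpace = \bA^1$ with $c = \lambda_1\lambda_2\lambda_3$. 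The crucial point is that, despite this, the product $\widetilde f \times \widetilde f$ still preserves the relative product $\totalSpace \times_{\baseSpace} \totalSpace$, because both factors rescale the base coordinate by the same factor $c$; in particular $\widetilde f \times \widetilde f$ maps the central fiber $\fiber \times \fiber$, lying over $t = 0$, to itself, restricting there to $f \times f$.

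With this in place I would run the kernel-level argument underlying Example \ref{ex:half-spherical-twist-conjugation}, but on the relative product rather than over $\baseSpace$. The Seidel--Thomas conjugation identity $(\widetilde f \times \widetilde f)_* \cQ_{\cO_{\cptComponent}} \cong \cQ_{\widetilde f_*\cO_{\cptComponent}}$ holds at the level of Fourier--Mukai kernels on $\totalSpace \times_{\baseSpace} \totalSpace$, and restricting it to $\fiber \times \fiber$ gives $(f \times f)_*\bigl(\cQ_{\cO_{\cptComponent}}\vert_{\fiber \times \fiber}\bigr) \cong \cQ_{f_*\cO_{\cptComponent}}\vert_{\fiber \times \fiber}$; passing back to the induced integral functors yields $f_* \circ H_{\cO_{\cptComponent}} \circ (f_*)^{-1} \cong H_{f_*\cO_{\cptComponent}}$. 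The main obstacle is exactly this last step: because $\widetilde f$ is not defined over $\baseSpace$, Example \ref{ex:half-spherical-twist-conjugation} does not apply verbatim, and one must check that the weaker property that $\widetilde f \times \widetilde f$ preserve $\totalSpace \times_{\baseSpace} \totalSpace$ together with the fiber $\fiber \times \fiber$ is enough for the kernel restriction to be well defined and compatible with convolution. Once this is verified, combining with the first paragraph completes the proof.
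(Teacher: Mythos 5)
Your proof is correct, but for the hard case it takes a genuinely different route from the paper. The paper also splits off the subgroup $S_3 \ltimes \{\lambda_1\lambda_2\lambda_3 = 1\}$ and handles it exactly as you do, via Proposition \ref{prop:automorphism-of-fiber-extends-to-total-space} and Example \ref{ex:half-spherical-twist-conjugation}. For the remaining part, however, it does \emph{not} try to extend the kernel argument to extensions $\widetilde f$ that are not over $\baseSpace$: instead it reduces to the diagonal subgroup $\{(\lambda,\lambda,\lambda)\}$, notes that such $f$ act trivially on $\cptComponent$ (Remark \ref{rem:diagonal-automorphism-acts-trivially}) so that the commutation relation holds on $\fiberDsupp$ and on skyscrapers of points of $\fiber \setminus \cptComponent$, and then invokes Lemma \ref{lem:criterion-to-be-standard-functor} to conclude that the commutator $H_{\cO_{\cptComponent}} \circ f_* \circ H_{\cO_{\cptComponent}}^{-1} \circ (f_*)^{-1}$ is $-\otimes\cL$ for a line bundle $\cL$, which is trivial by Proposition \ref{prop:picard-group-of-fiber}. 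Your route is more uniform (no case split beyond the first, no appeal to the appendix lemma or the Picard computation), and the verification you flag does go through, though I would organize it slightly differently: rather than re-proving a ``relative'' Seidel--Thomas conjugation, use the uniqueness of the relative kernel. Since $\widetilde f \times \widetilde f$ restricts to an automorphism $g$ of $\totalSpace \times_{\baseSpace} \totalSpace$ (both coordinates of the base are rescaled by the same $c = \lambda_1\lambda_2\lambda_3$), the pushforward $g_*\cQ_{\cO_{\cptComponent}}$ is an object of $D(\QCoh(\totalSpace \times_{\baseSpace} \totalSpace))$ whose image in $D(\QCoh(\totalSpace\times\totalSpace))$ is the absolute kernel of $\widetilde f_* \circ T_{\cO_{\cptComponent}} \circ (\widetilde f_*)^{-1} \cong T_{\widetilde f_* j_*\cO_{\cptComponent}} \cong T_{j_*\cO_{\cptComponent}}$; by the uniqueness in \cite[Proposition 4.2]{arai2024halfspherical} this forces $g_*\cQ_{\cO_{\cptComponent}} \cong \cQ_{\cO_{\cptComponent}}$. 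Restriction to $\fiber\times\fiber$ then commutes with $g_*$ because $g$ preserves the fiber over $t=0$ and the relevant square is Cartesian with vertical isomorphisms, giving $(f\times f)_*(\cQ_{\cO_{\cptComponent}}\vert_{\fiber\times\fiber}) \cong \cQ_{\cO_{\cptComponent}}\vert_{\fiber\times\fiber}$ and hence the claim. The trade-off is that your argument requires this mild extension of the kernel-restriction formalism beyond automorphisms over $\baseSpace$, whereas the paper stays strictly within Example \ref{ex:half-spherical-twist-conjugation} at the cost of the extra machinery of Lemma \ref{lem:criterion-to-be-standard-functor}.
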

\begin{proof}
    First, suppose that $f$ is contained in the subgroup $S_3 \ltimes \{(\lambda_1, \lambda_2, \lambda_3) \in (\bC^*)^3 \mid \lambda_1\lambda_2\lambda_3 = 1\} \subset \Aut(\fiber)$ (Proposition \ref{prop:automorphism-group-of-fiber}).
    By Proposition \ref{prop:automorphism-of-fiber-extends-to-total-space}, there exists an automorphism $\tilde{f}$ of $\totalDsupp$ over $\baseSpace$ such that $\tilde{f}\vert_\fiber = f$.
    For such $\tilde{f}$, we have the commutativity
    \begin{equation}
        T_{\cO_{\cptComponent}} \circ \tilde{f}_* = \tilde{f}_* \circ T_{\cO_{\cptComponent}}.
    \end{equation}
    By restricting the above relation to the fiber (Example \ref{ex:half-spherical-twist-conjugation}), one has
    \begin{equation}\label{eq:commutativity-of-automorphism-and-half-spherical-twist}
        H_{\cO_{\cptComponent}} \circ f_* = f_* \circ
        H_{\cO_{\cptComponent}}.
    \end{equation}

    Next, if $f$ is contained in the subgroup $\{(\lambda, \lambda, \lambda) \mid \lambda \in \bC^*\}$ of the $(\bC^*)^3$-part of $\Aut(\fiber) \cong S_3 \ltimes (\bC^*)^3$, then $f$ acts on $\cptComponent$ trivially (Remark \ref{rem:diagonal-automorphism-acts-trivially}).
    This implies that the relation \eqref{eq:commutativity-of-automorphism-and-half-spherical-twist} holds on the subcategory $\fiberDsupp \subset \fiberD$.
    On the other hand, since the kernel of $H_{\cO_{\cptComponent}}$ is supported on $\cptComponent \times \cptComponent$ and $f$ preserves the open subvariety $\fiber \setminus \cptComponent$, we have
    \begin{equation}\label{eq:automorphism-commutes-with-half-spherical-twist-on-points}
        H_{\cO_{\cptComponent}} \circ f_* (\cO_x) \cong f_* \circ H_{\cO_{\cptComponent}}(\cO_x)
    \end{equation}
    for all $x \in \fiber \setminus \cptComponent$.
    Then by Lemma \ref{lem:criterion-to-be-standard-functor}, there exists a line bundle $\cL$ on $\fiber$ such that
    \begin{equation}
        H_{\cO_{\cptComponent}} \circ f_* \circ H_{\cO_{\cptComponent}}^{-1} \circ (f_*)^{-1} \cong -\otimes \cL.
    \end{equation}
    Notice that $\cL$ is trivial if and only if its restriction to $\cptComponent \cong \bP^2$ is trivial (Proposition \ref{prop:picard-group-of-fiber}).
    Therefore, the relation \eqref{eq:commutativity-of-automorphism-and-half-spherical-twist} on $\fiberDsupp$ implies that $\cL \cong \cO_{\fiber}$.

    Finally, since $\Aut(\fiber)$ is generated by the above two types of automorphisms (Proposition \ref{prop:automorphism-group-of-fiber}), the relation \eqref{eq:commutativity-of-automorphism-and-half-spherical-twist} holds for all $f \in \Aut(\fiber)$.
\end{proof}
Next, we prove that all autoequivalences of $\fiberD$ are of Fourier--Mukai type.
Let us recall the notion of dg enhancement.
For a triangulated category $\cT$, a \emph{dg enhancement} of $\cT$ is a pair $(\cB, \varepsilon)$ where $\cB$ is a pretriangulated dg category and $\varepsilon \colon H^0(\cB) \to \cT$ is an exact equivalence.
We say that $\cT$ has a \emph{strongly unique dg enhancement} if it admits a dg enhancement and, for any two dg enhancements $(\cB, \varepsilon)$ and $(\cB', \varepsilon')$ of $\cT$, there exists a quasi-functor $F \colon \cB \to \cB'$ whose $H^0(F) \colon H^0(\cB) \to H^0(\cB')$ is an exact equivalence satisfying $\varepsilon' \circ H^0(F) \cong \varepsilon$.

\begin{lemma}
    The sequence of objects $\{\cO_{\fiber}(l)\}_{l \in \bZ}$ gives an ample sequence in $\Coh(\fiber)$ (in the sense of \cite[Definition 9.1]{MR2629991}), where $\cO_{\fiber}(1) = \canonicalBundleProjection^*\cO_{\canonicalBundleBaseSpace}(1)\vert_{\fiber}$ is an ample line bundle on $\fiber$.
\end{lemma}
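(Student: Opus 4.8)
The plan is to verify the defining conditions of an ample sequence in the sense of the cited definition, which for a sequence of line bundles require that for every $A \in \Coh(\fiber)$ there exist $l_0$ such that, for all $l \le l_0$, the evaluation map $\Hom(\cO_{\fiber}(l), A) \otimes \cO_{\fiber}(l) \to A$ is surjective and $\Ext^p(\cO_{\fiber}(l), A) = 0$ for every $p \ne 0$. Setting $n = -l$, these amount to asking that $A \otimes \cO_{\fiber}(n)$ be globally generated and that $H^p(\fiber, A \otimes \cO_{\fiber}(n)) = 0$ for $p > 0$, for all $n \gg 0$ (the groups with $p < 0$ vanishing automatically). The structural fact I would exploit is that $g \coloneq \canonicalBundleProjection\vert_{\fiber} \colon \fiber \to \canonicalBundleBaseSpace$ is an \emph{affine} morphism with $\cO_{\fiber}(1) = g^*\cO_{\canonicalBundleBaseSpace}(1)$; indeed $g$ is the restriction to the closed subscheme $\fiber \subset \totalSpace$ of the line-bundle projection $\canonicalBundleProjection \colon \totalSpace = \Tot(K_{\canonicalBundleBaseSpace}) \to \canonicalBundleBaseSpace$, which is affine.

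Granting this, the first condition is essentially formal. Since the pullback of an ample bundle along an affine morphism is ample, $\cO_{\fiber}(1)$ is ample on $\fiber$; concretely it restricts to $\cO_{\cptComponent}(1)$ on $\cptComponent = \bP^2$ and to $\cO_{\component_i}(-E_i)$ on each $\component_i \cong \Bl_0\bA^2$, the latter being the pullback of $\cO_{\bP^1}(1)$ under the affine projection $\component_i \cong \Tot(\cO_{\bP^1}(-1)) \to \bP^1$, so ampleness holds on every component. By the Grothendieck--EGA characterisation of ampleness, which is valid on the non-proper $\fiber$, the sheaf $A \otimes \cO_{\fiber}(n)$ is then globally generated for all $n \gg 0$, which is exactly the surjectivity required.

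The cohomology vanishing is the main obstacle, because $\fiber$ is not proper and ordinary Serre vanishing is unavailable. I would push forward along $g$: affineness gives $H^p(\fiber, A \otimes \cO_{\fiber}(n)) \cong H^p(\canonicalBundleBaseSpace, (g_* A) \otimes \cO_{\canonicalBundleBaseSpace}(n))$ via the projection formula and the vanishing of higher direct images. The difficulty is that $g_* A$ is merely quasi-coherent, so the Serre threshold is a priori not uniform. To control it I would use the fibrewise $\bC^*$-action on $\totalSpace$, which preserves $\fiber$ and makes $\cB \coloneq g_* \cO_{\fiber}$ a non-negatively graded $\cO_{\canonicalBundleBaseSpace}$-algebra with $\cB_0 = \cO_{\canonicalBundleBaseSpace}$ and $\cB_k \cong \cO_{\canonicalBundleBaseSpace}(3k)\vert_{C}$ for $k \ge 1$, where $C$ is the triangle of coordinate lines in $\canonicalBundleBaseSpace$. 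Choosing a coherent subsheaf $M_0 \subset g_* A$ that generates $g_* A$ over $\cB$, I would write $g_* A$ as the increasing union of the coherent subsheaves $\sum_{k \le N} \cB_k \cdot M_0$; since passing to the graded piece of weight $k$ only raises the twist by $3k \ge 0$, ordinary Serre vanishing on $\canonicalBundleBaseSpace$ and on the proper curve $C$ applies with a threshold $n_0$ that is \emph{uniform} in $k$ and $N$. As cohomology on the Noetherian space $\canonicalBundleBaseSpace$ commutes with filtered colimits, one concludes $H^p(\canonicalBundleBaseSpace, (g_* A)(n)) = 0$ for all $p > 0$ and $n \ge n_0$. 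The step I expect to demand the most care is precisely this uniformity of the vanishing threshold, which works only because the extra grading shifts positivity upward rather than downward.
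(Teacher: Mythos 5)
Your route is genuinely different from the paper's. The paper does not verify the definition directly for an arbitrary coherent sheaf $A$; it invokes Ballard's criterion (Lemma~5.2 of the cited paper of Ballard), which reduces the whole statement to two conditions involving only the line bundles themselves, namely $\cO_{\fiber}(l) \not\cong \cO_{\fiber}$ for $l \neq 0$ and $H^{k}(\fiber, \cO_{\fiber}(l)) = 0$ for $k>0$ and $l \gg 0$; the latter is then checked by pushing forward along the affine morphism $\canonicalBundleProjection$, using the resolution $0 \to \cO_{\totalSpace} \to \cO_{\totalSpace} \to \closedImmersion_*\cO_{\fiber} \to 0$ and $\canonicalBundleProjection_*\cO_{\totalSpace} \cong \bigoplus_{m\ge 0}\cO_{\canonicalBundleBaseSpace}(3m)$. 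By contrast, you verify the surjectivity and $\Ext$-vanishing conditions directly for every coherent $A$. That part of your argument is essentially sound: the affineness of $\canonicalBundleProjection\vert_{\fiber}$, the identification of the graded pieces of $(\canonicalBundleProjection\vert_{\fiber})_*\cO_{\fiber}$ as $\cO_{\canonicalBundleBaseSpace}(3k)\vert_C$, and the uniformity of the Serre threshold all check out (the graded pieces of your filtration are quotients of $(M_0\vert_C)(3k+n)$ on the projective curve $C$, where $H^{\ge 2}$ vanishes for dimension reasons, $H^1$-vanishing passes to quotients, and the threshold is uniform because $3k \ge 0$). What the paper's route buys is that one never has to handle an arbitrary $A$ at all; what yours buys is independence from Ballard's reduction.

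The one genuine concern is your paraphrase of the definition. An Orlov-type ample sequence standardly carries a third condition, $\Hom(A, P_i) = 0$ for $i \ll 0$, in addition to the two you list, and you do not address it. On this non-proper $\fiber$ it is not a formality: already for $A = \cO_{\fiber}$ one has $\Hom(\cO_{\fiber}, \cO_{\fiber}(l)) = H^0(\fiber, \cO_{\fiber}(l)) \neq 0$ for \emph{every} $l \in \bZ$, since each non-compact component $\component_i \cong \Tot(\cO_{\bP^1}(-1))$ satisfies $H^0(\component_i, \cO_{\fiber}(l)\vert_{\component_i}) \cong \bigoplus_{m\ge 0}H^0(\bP^1, \cO(l+m))$ and suitable such sections (vanishing on $\component_i \cap (\fiber\setminus\component_i)$) extend by zero to $\fiber$. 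So if the definition cited in the statement does include that condition, your direct verification cannot be completed as written, and the statement must be accessed the way the paper does, through Ballard's criterion for quasi-projective schemes, whose purpose is precisely to deal with this failure of properness. Before finalizing, check exactly which conditions the cited Definition~9.1 imposes and how Ballard's Lemma~5.2 relates them to the two cohomological conditions the paper verifies.
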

\begin{proof}
    By \cite[Lemma 5.2]{ballard2009equivalencesderivedcategoriessheaves}, it is enough to check the following two conditions:
    \begin{enumerate}
        \item $\cO_{\fiber}(l) \neq \cO_{\fiber}$ for all $l \neq 0$.
        \item For sufficiently large $l$, we have $H^k(\fiber, \cO_{\fiber}(l)) = 0$ for all $k > 0$.
    \end{enumerate}
    (1) is clear.
    For (2), projection formula implies
    \begin{equation}
        H^k(\fiber, \cO_{\fiber}(l)) \cong H^k(\canonicalBundleBaseSpace, \canonicalBundleProjection_*\closedImmersion_*\cO_{\fiber} \otimes \cO_{\canonicalBundleBaseSpace}(l)).
    \end{equation}
    Note that the push-forward $\canonicalBundleProjection_*$ coincides with the non-derived one as $\canonicalBundleProjection$ is affine.
    Since there is an exact sequence
    \begin{equation}
        0 \to \cO_{\totalSpace} \to \cO_{\totalSpace} \to \closedImmersion_*\cO_{\fiber} \to 0,
    \end{equation}
    we only need to find an integer $l$ such that $H^k(\canonicalBundleBaseSpace, \canonicalBundleProjection_*\cO_{\totalSpace} \otimes \cO_{\canonicalBundleBaseSpace}(l)) = 0$ for all $k > 0$.
    By definition of $\totalSpace$, we have
    \begin{equation}\canonicalBundleProjection_*\cO_{\totalSpace} \cong \bigoplus_{m \geq 0} \Sym^m(\cO_{\canonicalBundleBaseSpace}(3)) \cong \bigoplus_{m \geq 0} \cO_{\canonicalBundleBaseSpace}(3m).
    \end{equation}
    Then $l \geq -2$ satisfies the condition.
\end{proof}
\begin{corollary}
    The category $\fiberD = D^b(\fiber)$ has a strongly unique dg-enhancement.
\end{corollary}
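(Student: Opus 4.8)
The plan is to read off strong uniqueness of the enhancement directly from the ample sequence produced in the previous lemma, feeding it into the enhancement-uniqueness machinery of Lunts--Orlov \cite{MR2629991} and its extension to the quasi-projective (non-proper) setting by Ballard \cite{ballard2009equivalencesderivedcategoriessheaves}. The point is that $\fiberD = D^b(\Coh(\fiber))$ is the bounded derived category of the abelian category $\Coh(\fiber)$, and the lemma shows that $\Coh(\fiber)$ carries the ample sequence $\{\cO_{\fiber}(l)\}_{l \in \bZ}$ in the precise sense of \cite[Definition 9.1]{MR2629991}. The existence of an ample sequence is exactly the hypothesis under which these references guarantee a strongly unique dg enhancement, so the corollary reduces to matching our situation with their statement.

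First I would note that $\fiber$ is quasi-projective: it is a closed subvariety of $\totalSpace = \Tot(K_{\canonicalBundleBaseSpace})$, which is the total space of a line bundle over a projective base and hence quasi-projective. This places us squarely within the class of schemes treated in \cite{ballard2009equivalencesderivedcategoriessheaves}. I would then invoke the uniqueness theorem of loc.\ cit.\ (equivalently the ample-sequence version of \cite{MR2629991}): given two dg enhancements $(\cB, \varepsilon)$ and $(\cB', \varepsilon')$ of $\fiberD$, the ample sequence provides enough homological rigidity to produce a quasi-functor $F \colon \cB \to \cB'$ whose $H^0(F)$ is an exact equivalence with $\varepsilon' \circ H^0(F) \cong \varepsilon$. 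This is precisely strong uniqueness, so $\fiberD$ has a strongly unique dg enhancement.

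The main (and essentially the only) obstacle is to select the correct form of the uniqueness theorem. Since $\fiber$ is neither proper nor smooth, one cannot quote the version stated for smooth projective varieties; instead one must use the formulation for quasi-projective schemes, or more intrinsically for abelian categories possessing an ample sequence. This is exactly why the previous lemma was phrased in terms of the ample sequence of Lunts--Orlov and verified through \cite[Lemma 5.2]{ballard2009equivalencesderivedcategoriessheaves}: with that hypothesis in hand, Ballard's theorem applies verbatim, requiring no properness or finiteness assumptions beyond the ample sequence already established.
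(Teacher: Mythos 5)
Your overall strategy coincides with the paper's: the preceding lemma supplies the ample sequence $\{\cO_{\fiber}(l)\}_{l\in\bZ}$ in $\Coh(\fiber)$, and strong uniqueness is then read off from a general uniqueness-of-enhancements theorem for the bounded derived category of an abelian category with an ample sequence. The gap lies in which theorem you invoke. Ballard's paper \cite{ballard2009equivalencesderivedcategoriessheaves} contains no statement about dg enhancements at all --- it concerns Fourier--Mukai representability of equivalences between derived categories of quasi-projective schemes --- and it enters the paper only through its Lemma~5.2, as a criterion for \emph{verifying} the ample-sequence condition of \cite[Definition 9.1]{MR2629991}. So the step ``Ballard's theorem applies verbatim'' is not available.

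Falling back on Lunts--Orlov \cite{MR2629991} does not close the gap either. Their results give plain uniqueness of the enhancement for all quasi-projective schemes, but their \emph{strong} uniqueness statements are proved under finiteness hypotheses (projectivity of the scheme; in the abstract formulation, finite-dimensionality of the Hom-spaces out of the objects of the ample sequence). These fail for the non-proper surface $\fiber$: already $\Hom(\cO_{\fiber}(-l),\cO_{\fiber}) = H^0(\fiber,\cO_{\fiber}(l))$ is infinite-dimensional, since $\fiber$ contains the non-compact components $\component_i \cong \Bl_0\bA^2$. This is precisely why the paper instead cites \cite[Proposition 7.13]{MR4853403}, which yields strong uniqueness of the enhancement of $D^b(\cA)$ assuming only the existence of an ample sequence, with no properness or finite-dimensionality hypotheses. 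Your argument becomes correct once the citation is replaced by that result (or once strong uniqueness is otherwise justified without the finiteness assumptions).
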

\begin{proof}
    Apply \cite[Proposition 7.13]{MR4853403} for the ample sequence $\{\cO_{\fiber}(l)\}_{l \in \bZ}$.
\end{proof}
\begin{proposition}[All autoequivalences of $\cD$ are Fourier-Mukai type]\label{prop:all-autoequivalences-are-FM-type}
    For any $\Phi \in \Aut(\fiberD)$, there exists a kernel $\cP \in D^b(\fiber \times \fiber)$ such that $\Phi \cong \Phi_\cP$.
\end{proposition}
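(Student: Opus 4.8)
The plan is to deduce the statement from the strongly unique dg enhancement established in the preceding corollary, combined with the general representability principle for Fourier--Mukai functors: an exact autoequivalence of the derived category of a scheme with an ample sequence and a (strongly) unique enhancement is automatically of integral-kernel type. The two ingredients feeding this principle, namely the ample sequence $\{\cO_{\fiber}(l)\}_{l \in \bZ}$ and the strongly unique dg enhancement of $\fiberD$, are exactly what the preceding two results provide, so the role of the proof is to assemble them.

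First I would lift $\Phi$ to the enhancement. Writing $(\cB, \varepsilon)$ for a fixed dg enhancement of $\fiberD = D^b(\fiber)$, the pair $(\cB, \Phi^{-1} \circ \varepsilon)$ is a second dg enhancement of the same triangulated category. Strong uniqueness then furnishes a quasi-functor $F \colon \cB \to \cB$ with $H^0(F)$ an exact equivalence compatible with the two identifications, so that $H^0(F) \cong \Phi$. In this way the autoequivalence $\Phi$ is promoted to a quasi-endofunctor of the enhancement. Next I would invoke the dg-categorical representability of quasi-functors (derived Morita theory in the scheme-theoretic form valid for schemes admitting an ample sequence, as underlies \cite{MR2629991} and \cite{MR4853403}): quasi-functors on the enhancement of $\fiberD$ are identified with objects of the derived category of the product, producing a kernel $\cP \in D(\QCoh(\fiber \times \fiber))$ together with an isomorphism $\Phi \cong \Phi_\cP$ of functors on $\fiberD$.

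The main obstacle, and the point where our geometry departs from the classical projective case, is to promote $\cP$ from $D(\QCoh(\fiber \times \fiber))$ to the bounded coherent subcategory $D^b(\fiber \times \fiber)$. Since $\fiber$ has three non-compact components, the usual properness-based boundedness arguments are unavailable. Here the ample sequence does the work: because both $\Phi$ and $\Phi^{-1}$ preserve $\fiberD = D^b(\fiber)$, the objects $\Phi(\cO_{\fiber}(l))$ lie in $D^b(\fiber)$ with cohomological amplitude bounded uniformly in $l$, and the ample-sequence form of the representability statement then bounds the amplitude of $\cP$ and forces its cohomology sheaves to be coherent. The cleanest route, which packages the last two steps together, is to apply the Fourier--Mukai representability theorem for schemes with an ample sequence and strongly unique enhancement directly to $\Phi$; this yields the desired $\cP \in D^b(\fiber \times \fiber)$ with $\Phi \cong \Phi_\cP$.
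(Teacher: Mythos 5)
Your overall architecture matches the paper's: strongly unique enhancement $\Rightarrow$ lift $\Phi$ to a quasi-functor $\Rightarrow$ derived Morita theory produces a kernel $\cP \in D(\QCoh(\fiber\times\fiber))$ $\Rightarrow$ upgrade to $D^b(\fiber\times\fiber)$. However, your middle step contains an imprecision that, taken literally, is a gap. You assert that ``quasi-functors on the enhancement of $\fiberD$ are identified with objects of the derived category of the product.'' For a \emph{singular} variety this is not what To\"en's theorem gives: derived Morita theory classifies quasi-functors out of $\Perf_{dg}(\fiber)$ (equivalently, continuous quasi-functors out of $D_{dg}(\QCoh(\fiber))$), and since $\fiber$ is singular one has $\Perf(\fiber) \subsetneq D^b(\fiber)$, so a quasi-endofunctor of $D^b_{dg}(\fiber)$ is not directly covered. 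The paper's proof therefore first restricts the lifted quasi-functor to $\Perf_{dg}(\fiber)$ (viewed as landing in $D_{dg}(\QCoh(\fiber))$), applies To\"en to get $\cP$ with $\Phi\vert_{\Perf(\fiber)} \cong \Phi_\cP\vert_{\Perf(\fiber)}$, and only then extends the isomorphism of functors to all of $D^b(\fiber)$.

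This also changes the role of the ample sequence. You use it to control the amplitude and coherence of $\cP$; its essential job in the paper is different: since the line bundles $\cO_{\fiber}(l)$ are perfect, the agreement of $\Phi$ and $\Phi_\cP$ on $\Perf(\fiber)$ gives agreement on the full subcategory $\{\cO_{\fiber}(l)\}_{l\in\bZ}$, and Orlov's convolution argument for ample sequences then promotes this to an isomorphism $\Phi \cong \Phi_\cP$ on the whole of $D^b(\fiber)$. Without this step you only know that $\cP$ represents $\Phi$ on perfect complexes, which is strictly weaker. The final coherence/boundedness of $\cP$ is then extracted as in the proof of the quasi-projective representability theorem of Lunts--Orlov, as you suggest. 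So your proposal is repairable along exactly the paper's lines, but as written the passage from the quasi-functor on $D^b_{dg}(\fiber)$ to a kernel, and the extension of the isomorphism from $\Perf(\fiber)$ to $D^b(\fiber)$, are the missing ingredients.
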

\begin{proof}
    Let $\Perf_{dg}(\fiber) \subset D_{dg}^b(\fiber) \subset D_{dg}(\QCoh(\fiber))$ be dg-enhancements of $\Perf(\fiber) \subset D^b(\fiber) \subset D(\QCoh(\fiber))$.
    Since $D^b(\fiber)$ has a strongly unique dg-enhancement, there exists a quasi-functor $\Phi_{dg} \colon D_{dg}^b(\fiber) \to D_{dg}^b(\fiber)$ such that $H^0(\Phi_{dg}) \cong \Phi$.
    Then the functor $\Phi' \colon \Perf(\fiber) \to D(\QCoh(\fiber))$ induced by $\Phi$ has a lift to a quasi-functor
    \begin{equation}
        \Perf_{dg}(\fiber) \hookrightarrow D_{dg}^b(\fiber) \xrightarrow{\Phi_{dg}} D_{dg}^b(\fiber) \hookrightarrow D_{dg}(\QCoh(\fiber)).
    \end{equation}
    By \cite[Theorem 8.9]{MR2276263}, there exists a kernel $\cP \in D(\QCoh(\fiber \times \fiber))$ such that $\Phi \vert_{\Perf(\fiber)} =\Phi' \cong \Phi_\cP\vert_{\Perf(\fiber)}$ (see also \cite[Section 1]{MR3556457}).
    In particular, $\Phi$ and $\Phi_\cP$ are isomorphic when restricted to the full subcategory $\{\cO_{\fiber}(l) \mid l \in \bZ\}$ of $D^b(\fiber)$, and the Orlov's argument \cite[Proposition 2.16]{MR1465519} (or \cite[Proposition B.1]{MR2629991}) shows $\Phi \cong \Phi_\cP$ on the whole category $D^b(\fiber)$.
    The condition $\cP \in D^b(\fiber \times \fiber)$ follows, for example, from the proof of \cite[Corollary 9.13 (4)]{MR2629991}.
\end{proof}

Finally, we show that any autoequivalence $\Phi \in \Aut^\dagger(\fiberD)$ is contained in the right-hand side of the isomorphism in Theorem \ref{thm:autoequivalence-group}.
\begin{lemma}\label{lem:autoeq-preserves-Ox-class}
    For any autoequivalence $\Phi \in \Aut(\fiberD)$, the induced automorphism $\Phi \colon K(\fiberDsupp) \to K(\fiberDsupp)$ preserves the subgroup $\bZ[\cO_x]$.
\end{lemma}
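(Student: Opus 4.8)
The plan is to exploit the $\Phi$-invariant Euler pairing between perfect complexes and complexes with proper support. Since $\Phi$ preserves both $\fiberPerf$ (by the proof of Lemma \ref{lem:auto-preserves-compact-support}) and $\fiberDc$, the bilinear pairing $\chi \colon K(\fiberPerf) \times K(\fiberDc) \to \bZ$, $(P, F) \mapsto \sum_i (-1)^i \dim \Ext^i_{\fiber}(P, F)$, is invariant under $\Phi$. I would first record the elementary but decisive observation that this pairing cannot separate skyscraper sheaves supported on different components: for any perfect $P$ and any closed point $w \in \fiber$ one has $\chi(P, \cO_w) = \rk P$. Indeed $P$ is locally a bounded complex of free $\cO_{\fiber}$-modules, so its rank is locally constant; as $\fiber$ is connected and its components are glued along the curves $E_i$ and $Z_{ij}$, local freeness forces the ranks on the two branches through any intersection point to agree, so the rank is a single integer $\rk P$. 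Consequently $\chi(P, \cO_x) = \chi(P, \cO_y) = \rk P$ for every $x \in \cptComponent$ and every $y \in \fiber \setminus \cptComponent$, and the class $[\cO_x] - [\cO_y] \in K(\fiberDc)$ lies in the right radical of $\chi$.

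Next I would transport this relation by $\Phi$. By Corollary \ref{cor:outside-skyscraper-sheaf} we have $\Phi(\cO_y) \cong \cO_{y'}[n]$ for some $y' \in \fiber \setminus \cptComponent$ and $n \in \bZ$, so $[\Phi \cO_y] = (-1)^n [\cO_{y'}]$. Writing $\epsilon = (-1)^n$ and combining $\Phi$-invariance with the previous step, for every perfect $P$ I compute
\begin{equation}
  \chi(P, \Phi[\cO_x]) = \chi(\Phi^{-1}P, [\cO_x]) = \chi(\Phi^{-1}P, [\cO_y]) = \chi(P, \Phi[\cO_y]) = \epsilon\, \chi(P, [\cO_{y'}]) = \epsilon\, \rk P = \chi(P, \epsilon[\cO_x]).
\end{equation}
Thus $\Phi[\cO_x]$ and $\epsilon[\cO_x]$, both of which lie in $K(\fiberDsupp)$ because $\Phi$ preserves $\fiberDsupp$ (Proposition \ref{prop:auto-preserves-subcategories}), pair identically with every class in $K(\fiberPerf)$.

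Finally I would invoke nondegeneracy of $\chi$ on $K(\fiberDsupp)$. By adjunction $\chi(P, \cptComponentImmersion_* G) = \chi_{\cptComponent}(\cptComponentImmersion^* P, G)$, and as $P$ ranges over $\fiberPerf$ the restriction $\cptComponentImmersion^* P$ runs over a generating set of $K(\cptComponent) = K(\bP^2)$ (for instance $\cptComponentImmersion^*\cO_{\fiber}(l) = \cO_{\bP^2}(l)$). Since the Euler form on $\bP^2$ is unimodular, the resulting pairing $K(\fiberPerf) \times K(\fiberDsupp) \to \bZ$ is nondegenerate in the second variable; equivalently, a class in $K(\fiberDsupp)$ is determined by its Hilbert polynomials $l \mapsto \chi(\cO_{\fiber}(l), -)$. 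The identity above therefore forces $\Phi[\cO_x] = \epsilon[\cO_x] = \pm[\cO_x]$, so $\Phi$ preserves $\bZ[\cO_x]$. The main point to get right is the asymmetry being exploited: the pairing is maximally degenerate along the non-compact directions $\cD'$, where it remembers only the total rank and hence identifies every off-component skyscraper with $[\cO_x]$, yet it stays nondegenerate on $\fiberDsupp$; it is precisely this degeneracy, together with the control over off-component skyscrapers furnished by Corollary \ref{cor:outside-skyscraper-sheaf}, that pins down the point class. One should also note that the naive self-pairing $\chi_{\fiber}$ on $K(\fiberDsupp)$ is ill-defined since $\fiber$ is singular (e.g.~$\Ext^*_{\fiber}(\cptComponentImmersion_*\cO_{\cptComponent}, \cptComponentImmersion_*\cO_{\cptComponent})$ is unbounded), which is exactly why the argument is routed through perfect complexes rather than through a form on $K(\fiberDsupp)$ alone.
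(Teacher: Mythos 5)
Your argument is correct, but it follows a genuinely different route from the paper's. The paper introduces the skew-symmetric form $\langle [E],[F]\rangle = \chi_{\totalSpace}(\closedImmersion_*E,\closedImmersion_*F)$ on $K(\fiberDsupp)$, identifies its radical with $\bZ[\cO_x]$ using Serre duality on the Calabi--Yau threefold $\totalSpace$, shows via the triangle $F[1]\to\closedImmersion^*\closedImmersion_*F\to F$ that every \emph{perfect} object of $\fiberDsupp$ has class in that radical, and concludes simply because $\cO_x$ (for $x$ a smooth point of $\fiber$ lying on $\cptComponent$), and hence $\Phi(\cO_x)$, is perfect; it needs neither the $\Phi$-invariance of any pairing nor Corollary \ref{cor:outside-skyscraper-sheaf}. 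You instead stay on $\fiber$ and use the $\Phi$-invariant pairing $K(\fiberPerf)\times K(\fiberDc)\to\bZ$, the observation that every skyscraper sheaf pairs with a perfect complex $P$ to $\rk P$ (so $[\cO_x]-[\cO_y]$ lies in the right radical), Corollary \ref{cor:outside-skyscraper-sheaf} to control $\Phi(\cO_y)$ for $y\notin\cptComponent$, and nondegeneracy of the pairing against $K(\fiberDsupp)$ via Hilbert polynomials on $\cptComponent\cong\bP^2$. All of your individual steps check out: local constancy of the rank of a perfect complex on the connected scheme $\fiber$, the adjunction $\chi_{\fiber}(P,\cptComponentImmersion_*G)=\chi_{\cptComponent}(\cptComponentImmersion^*P,G)$, and the unimodularity of the Euler form on $\bP^2$. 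Your route buys independence from Serre duality on the ambient threefold (everything is intrinsic to $\fiber$) and even pins the sign down as $(-1)^n$ with $n$ the shift in $\Phi(\cO_y)\cong\cO_{y'}[n]$; the paper's route buys brevity and avoids invoking the classification of images of off-component skyscrapers. Your closing remark that the self-pairing on $K(\fiberDsupp)$ is ill-defined on the singular $\fiber$ is accurate and is precisely why the paper detours through $\totalSpace$ while you detour through $\fiberPerf$.
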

\begin{proof}
    Let $\langle -, -\rangle$ be a bilinear form on $K(\fiberDsupp)$ given by
    \begin{equation}
        \langle [E], [F] \rangle = \chi_{\totalSpace}(\closedImmersion_*E, \closedImmersion_*F) = \sum_{k} (-1)^k \dim_\bC \Ext_{\totalSpace}^k(\closedImmersion_*E, \closedImmersion_*F).
    \end{equation}
    By Serre duality on $\totalSpace$, the form $\langle -, -\rangle$ is skew-symmetric, and the subgroup $\bZ[\cO_x]$ is characterized as its kernel.
    Here, we claim that $\langle E , F \rangle = 0$ if $F \in \fiberDsupp$ is a perfect complex.
    In that case, there exists a well-defined linear form
    $\chi_{\fiber}(-, F) \colon K(\fiberDsupp) \to \bZ$
    given by
    \begin{equation}
        \chi_{\fiber}(E, F) = \sum_{k} (-1)^k \dim_\bC \Ext_{\fiber}^k(E, F)
    \end{equation}
    since the sum is finite.
    We also have an exact triangle
    \begin{equation}
        F[1] \to \closedImmersion^*\closedImmersion_*F
        \to F \xrightarrow{+1}.
    \end{equation}
    Then by the adjunction $\closedImmersion^* \dashv \closedImmersion_*$ one has $\langle E, F \rangle = \chi_{\fiber}(\closedImmersion^*\closedImmersion_*F, E) = \chi_{\fiber}(E[1], F) + \chi_{\fiber}(E, F) = 0$.
    Now, pick a smooth point $x \in \fiber^{\sm} \cap \cptComponent$ so that $\cO_x$ and hence $\Phi(\cO_x)$ are perfect complexes (see proof of Lemma \ref{lem:auto-preserves-compact-support}).
    Then $\langle - , \Phi(\cO_x) \rangle = 0$ and thus $\Phi(\cO_x) \in \bZ[\cO_x]$.

\end{proof}

\begin{lemma}\label{lem:geometric-stability-condition-standard-equivalence}
    Let $\Phi \in \Aut^\dagger(\fiberD)$ be an autoequivalence such that there exist two geometric stability conditions $\sigma, \sigma' \in \geometricChamber$ with $\Phi(\sigma) = \sigma'$.
    Then $\Phi$ is of the form $f_*\circ (-\otimes \cL)[n]$ for some $f \in \Aut(\fiber)$, $\cL \in \Pic(\fiber)$ and $n\in \bZ$.
\end{lemma}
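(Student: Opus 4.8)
The plan is to show that $\Phi$ carries every skyscraper sheaf to a shifted skyscraper sheaf with a single global shift, and then to reconstruct the standard form from this action on points. First I would exploit the geometric hypothesis on $\cptComponent$. Since $\sigma$ is geometric, the sheaves $\cO_x$ for $x \in \cptComponent$ are $\sigma$-stable of a common phase $\phi_0$; as the action of $\Phi$ sends the slicing of $\sigma$ to that of $\sigma' = \Phi(\sigma)$, each $\Phi(\cO_x)$ is $\sigma'$-stable, again of the common phase $\phi_0$. By Lemma \ref{lem:autoeq-preserves-Ox-class} the induced automorphism of $K(\fiberDsupp)$ preserves the rank-one subgroup $\bZ[\cO_x]$, so $[\Phi(\cO_x)] = \pm[\cO_x]$. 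Because $\sigma' \in \geometricChamber$, the second part of Theorem \ref{thm:fiber-geometric-chamber} forces $\Phi(\cO_x) \cong \cO_{g(x)}[n_x]$ for some $g(x) \in \cptComponent$ and $n_x \in \bZ$. Writing $\phi'_0$ for the common skyscraper phase of $\sigma'$, the $\sigma'$-phase of $\cO_{g(x)}[n_x]$ is $\phi'_0 + n_x$, which must equal $\phi_0$ for every $x$; hence $n_x = \phi_0 - \phi'_0$ is a constant $n$ on $\cptComponent$. The map $g$ is a bijection of $\cptComponent$ since $\Phi^{-1}$ supplies its inverse by the same argument applied to $\sigma' \mapsto \sigma$.

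Next I would treat points away from $\cptComponent$. For $x \in \fiber \setminus \cptComponent$, Corollary \ref{cor:outside-skyscraper-sheaf} gives $\Phi(\cO_x) \cong \cO_{g(x)}[m_x]$ with $g(x) \in \fiber \setminus \cptComponent$ and $m_x \in \bZ$, so $g$ extends to a bijection of the closed points of $\fiber$ preserving $\cptComponent$ and its complement. To see that the shift is globally constant, I would realize $\Phi$ as a Fourier--Mukai transform with kernel $\cP \in D^b(\fiber \times \fiber)$ (Proposition \ref{prop:all-autoequivalences-are-FM-type}), so that $\Phi(\cO_x)$ is the derived restriction $\cP|_{\{x\} \times \fiber}$. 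A standard upper-semicontinuity argument shows that the degree in which this restriction is concentrated is locally constant in $x$; since $\fiber$ is connected and this degree equals $-n$ on $\cptComponent$, it equals $-n$ everywhere, i.e.\ $m_x = n$ for all $x$. Thus $\Phi[-n]$ sends every skyscraper $\cO_x$ to the skyscraper $\cO_{g(x)}$ placed in degree zero.

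Finally I would reconstruct the standard form. The fiberwise-skyscraper property of the kernel of $\Phi[-n]$ forces its support to be the graph of a morphism $f \colon \fiber \to \fiber$ agreeing with $g$ on closed points; since $\Phi$ is an equivalence, $f$ is an automorphism of $\fiber$. Then $(f^{-1})_* \circ \Phi[-n]$ fixes every skyscraper sheaf, $\cO_x \mapsto \cO_x$, so Lemma \ref{lem:criterion-to-be-standard-functor} produces a line bundle $\cL \in \Pic(\fiber)$ with $(f^{-1})_* \circ \Phi[-n] \cong -\otimes \cL$, and rearranging gives $\Phi \cong f_* \circ (-\otimes \cL)[n]$. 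I expect the main obstacle to be precisely this last step: upgrading the set-theoretic bijection $g$ to an honest automorphism of the \emph{singular} variety $\fiber$, which is where the Fourier--Mukai description of Proposition \ref{prop:all-autoequivalences-are-FM-type} and the kernel-support analysis underlying Lemma \ref{lem:criterion-to-be-standard-functor} are indispensable.
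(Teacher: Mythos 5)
Your proposal is correct and follows essentially the same route as the paper: Lemma \ref{lem:autoeq-preserves-Ox-class} plus Theorem \ref{thm:fiber-geometric-chamber} for skyscrapers on $\cptComponent$, Corollary \ref{cor:outside-skyscraper-sheaf} for the remaining points, Fourier--Mukai representability, and Lemma \ref{lem:criterion-to-be-standard-functor} to conclude. The only difference is that your intermediate steps (constancy of the shift via phases and semicontinuity, and the explicit construction of the automorphism $f$ from the kernel support) are redundant, since the hypothesis of Lemma \ref{lem:criterion-to-be-standard-functor} already permits the shift $n_x$ to depend on $x$ and its conclusion already supplies $f$, $\cL$, and a single $n$.
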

\begin{proof}
    Since the subgroup $\bZ[\cO_x] \subset K(\fiberDsupp)$ is preserved by $\Phi$ (Lemma \ref{lem:autoeq-preserves-Ox-class}), for every $x \in \cptComponent$ the object $\Phi(\cO_x)$ is a $\sigma'$-stable object of class $\pm[\cO_x]$.
    By Theorem \ref{thm:fiber-geometric-chamber}, it must be of the form $\cO_y[n]$ for some $y \in \cptComponent$ and $n \in \bZ$.
    Together with Corollary \ref{cor:outside-skyscraper-sheaf}, we can apply Lemma \ref{lem:criterion-to-be-standard-functor} so that $\Phi = f_*\circ (-\otimes\cL)[n]$ for some $f \in \Aut(\fiber)$, $\cL \in \Pic(\fiber)$, and $n \in \bZ$.
\end{proof}

\begin{proof}[Proof of Theorem \ref{thm:autoequivalence-group}]
    The subgroup of $\Aut^\dagger(\fiberD)$ generated by the shifts, $\Aut(\fiber)$, $\Pic(\fiber)$, and the half-twists $H_E$ for all exceptional bundles $E$ on $\cptComponent$ is isomorphic to $\bZ \times \Gamma_1(3) \times \Aut(\fiber)$ by Proposition \ref{prop:picard-group-of-fiber}, Lemma \ref{lemma:gamma1-3}, Lemma \ref{lem:half-spherical-twist-in-gamma1-3}, and Lemma \ref{lem:automorphism-commutes-with-half-spherical-twist}.
    Given $\Phi \in \Aut^\dagger(\fiberD)$, pick a geometric stability condition $\sigma \in \geometricChamber$.
    By Corollary \ref{cor:stab-dagger-is-covered-by-translates-of-geometric-chamber}, there exists another autoequivalence $\Psi \in \langle H_{\cO_{\cptComponent}}, -\otimes\cO_{\fiber}(1) \rangle = \Gamma_1(3)$ such that $(\Psi \circ \Phi)(\sigma) \eqcolon \sigma' \in \overline{\geometricChamber}$.
    Slightly moving $\sigma$ in $\geometricChamber$ if necessary, we may assume that $\sigma'$ is also a geometric stability condition.
    Then Lemma \ref{lem:geometric-stability-condition-standard-equivalence} shows $\Psi \circ \Phi \in \bZ \times \Gamma_1(3) \times \Aut(\fiber)$.
\end{proof}

\appendix
\section{Lemmas on Integral Functors}\label{sec:lemmas-on-integral-functors}
\begin{lemma}[cf.~{\cite[Corollary A.10]{MR3136492}}]\label{lem:quasi-finite-flat-with-constant-degree-is-finite}
    Let $f \colon X \to Y$ be a quasi-finite, separated, and flat morphism between noetherian schemes.
    Suppose the degree of any fiber of $f$ is constant.
    Then $f$ is finite.
\end{lemma}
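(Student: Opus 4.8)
The plan is to reduce finiteness to the emptiness of a closed ``boundary'' via Zariski's Main Theorem, and then play the upper semicontinuity of the fibre degree of a finite morphism against the flatness of $f$. Since $Y$ is noetherian and $f$ is separated and quasi-finite (hence of finite type), Zariski's Main Theorem provides a factorization $f = \bar f \circ j$ with $j \colon X \to \bar X$ an open immersion and $\bar f \colon \bar X \to Y$ finite. Writing $U = j(X)$ and $Z = \bar X \setminus U$ (a closed subset), finiteness of $f$ is equivalent to $j$ being an isomorphism, i.e.\ to $Z = \emptyset$; this is what I aim to prove. The case $n = 0$ forces $X = \emptyset$ and is trivial, so I assume $n \geq 1$, and I may replace $\bar X$ by the scheme-theoretic closure of $U$, so that $U$ is dense in $\bar X$.

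The next step is a fibrewise comparison. For each $y \in Y$ the fibre $\bar X_y$ is a finite $k(y)$-scheme, hence Artinian, so it is the disjoint union of its finitely many local components; the open subscheme $X_y = j^{-1}(\bar X_y)$ is then the union of some of these components. Consequently
\begin{equation}
    \deg_y f = \dim_{k(y)} \Gamma(X_y, \cO_{X_y}) \le \dim_{k(y)} \Gamma(\bar X_y, \cO_{\bar X_y}) = \deg_y \bar f,
\end{equation}
with equality precisely when $X_y = \bar X_y$, equivalently when $y \notin \bar f(Z)$. Because $\bar f$ is finite, the function $g(y) := \deg_y \bar f = \dim_{k(y)}\bigl((\bar f_*\cO_{\bar X}) \otimes k(y)\bigr)$ is upper semicontinuous, and the hypothesis gives $g(y) \ge \deg_y f = n$ for all $y$. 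Hence $V := \{y \mid g(y) = n\} = \{y \mid g(y) \le n\}$ is open, equals $Y \setminus \bar f(Z)$, and is dense (as $U$ is dense and $\bar f$ is finite, $\bar f(Z)$ is nowhere dense). Over $V$ the morphism $f$ coincides with the finite morphism $\bar f$; in particular $f$ is finite over $V$, and there $\cF := \bar f_*\cO_{\bar X}$ is locally free of rank $n$.

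It remains to show $V = Y$, equivalently $\bar f(Z) = \emptyset$, and this is where flatness enters and is the main obstacle. The idea is that flatness of $f$ prevents the fibre degree from dropping under specialization, so the value $n$ attained generically must persist everywhere; combined with $\cF$ being locally free of rank $n$ on the dense open $V$, one expects $\cF$ to be locally free of rank $n$ on all of $Y$, whence $g \equiv n$ and $Z = \emptyset$. Concretely I would localize at a putative point $y_0 \in \bar f(Z)$, write $\bar X = \Spec B$ with $B$ module-finite over the local ring $\cO_{Y,y_0}$, and use that $U \to \Spec \cO_{Y,y_0}$ is flat of constant fibre degree $n$ to show that the extra Artinian components over $y_0$ responsible for $g(y_0) > n$ cannot occur without contradicting flatness of $U$ over the base.

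I expect the genuine difficulty to be exactly this last passage --- from ``finite over the dense open $V$'' to ``finite everywhere'' --- since it is precisely here that the hypotheses of flatness and constant fibre degree must be used together; all the preceding reductions are formal consequences of Zariski's Main Theorem and semicontinuity of fibre dimensions of coherent sheaves.
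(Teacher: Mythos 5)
Your setup---Zariski's Main Theorem, the reduction to showing $Z=\overline{X}\setminus U=\emptyset$, the fibrewise inequality $\deg_y f\le\deg_y\bar f$ with equality iff $y\notin\bar f(Z)$, and the semicontinuity of $g(y)=\deg_y\bar f$---is correct and matches the first half of the paper's argument. But the proof has a genuine gap at exactly the point you yourself flag as ``the main obstacle'': the passage from ``$V=Y\setminus\bar f(Z)$ is dense'' to ``$\bar f(Z)=\emptyset$'' is never carried out. The sentence ``use that $U\to\Spec\cO_{Y,y_0}$ is flat of constant fibre degree $n$ to show that the extra Artinian components \dots\ cannot occur without contradicting flatness'' restates the conclusion rather than proving it. Over a general noetherian local ring $\cO_{Y,y_0}$ the comparison you want is not available directly: $\bar f_*\cO_{\overline{X}}$ need not be free there, semicontinuity only gives $g(y_0)\ge n$ (which you already know), and flatness of the open piece $U$ says nothing a priori about the module $B$ at the points of $Z$, which is precisely where non-flatness is allowed to hide. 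So the decisive use of the hypotheses is missing.

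The paper closes this gap by a prior reduction that your proposal omits: since a separated quasi-finite morphism of finite type is finite iff it is proper, the valuative criterion reduces everything to the case $Y=\Spec R$ with $R$ a DVR. There one replaces $\overline{A}=H^0(\cO_{\overline{X}})$ by its torsion-free quotient (legitimate because $H^0(\cO_X)$ is $R$-torsion-free by flatness, so $X$ remains an open subscheme of the new $\overline{X}$), making $\overline{A}$ finite \emph{free} over $R$, whence $\deg(\overline{X}_y)=\deg(\overline{X}_\eta)$. Density of $X$ gives $X_\eta=\overline{X}_\eta$, the constant-degree hypothesis gives $\deg(X_y)=\deg(X_\eta)$, and combining these forces $\deg(X_y)=\deg(\overline{X}_y)$, hence $X_y=\overline{X}_y$ and $X=\overline{X}$. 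If you want to salvage your localization strategy instead, you would still need to pass to a one-dimensional trait through $y_0$ (a DVR dominating a generization $\xi\rightsquigarrow y_0$) or invoke the local structure theorem for quasi-finite separated morphisms to make the degree comparison at $y_0$ rigorous; as written, the step where flatness and constant degree actually do their work is absent.
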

Here we say that a morphism $f \colon X \to Y$ is \emph{quasi-finite} if it is of finite type and has finite fibers.
The degree of a fiber $X_y = f^{-1}(y)$ is $\deg(X_y) = \dim_{\kappa(y)} H^0(X_y, \cO_{X_y})$.
\begin{proof}[Proof of Lemma \ref{lem:quasi-finite-flat-with-constant-degree-is-finite}]
    We may assume the constant degree of $f$ is non-zero.
    By the valuative criterion, we may assume that $Y = \Spec R$ for a DVR $R$.
    By Zariski's main theorem \cite[\href{https://stacks.math.columbia.edu/tag/05K0}{Tag 05K0}]{stacks-project}, it factors as an open immersion $X \hookrightarrow \overline{X}$ followed by a finite morphism $\overline{f} \colon \overline{X} \to Y$ (and hence $\overline{X}$ is affine).
    We may assume that $X$ is dense in $\overline{X}$.
    We can also assume that $\overline{X}$ is flat over $Y$ as follows:
    Let $R \to \overline{A} \to H^0(\cO_X)$ be the ring homomorphisms corresponding to $Y \leftarrow \overline{X} \hookleftarrow X$, where $\overline{X} = \Spec \overline{A}$.
    The flatness of $X$ over $R$ implies that $H^0(\cO_X)$ is torsion-free as $R$-module, and thus we can replace $\overline{A}$ with its torsion-free quotient so that $\overline{X}$ is flat over $Y$ \cite[\href{https://stacks.math.columbia.edu/tag/0539}{Tag 0539}]{stacks-project}.

    Let $y$ and $\eta$ be the closed point and the generic point of $Y$, respectively.
    Since $\overline{A}$ is a finite free $R$-module \cite[\href{https://stacks.math.columbia.edu/tag/02KB}{Tag 02KB}]{stacks-project}, we have $\deg(\overline{X}_y) = \deg(\overline{X}_\eta)$.
    We also have $\deg(X_y) = \deg(X_\eta)$ by the assumption on $f$, and $\deg(X_\eta) = \deg(\overline{X}_\eta)$ since $X \subset \overline{X}$ is dense.
    These equalities show $\deg(X_y) = \deg(\overline{X}_y)$, which implies $X_y = \overline{X}_y$.
    Thus, $X = \overline{X}$ is finite over $Y$.
\end{proof}
\begin{lemma}[A stronger version of {\cite[Lemma 3.11]{arai2024halfspherical}}]\label{lem:criterion-to-be-standard-functor}
    Let $k$ be an algebraically closed field with characteristic zero.
    Let $X$ and $Y$ be connected quasi-projective schemes over $k$, with $X$ reduced.
    Suppose we are given an integral functor $\Phi = \Phi_{\cP}$ with $\cP \in D^b(X \times_k Y)$ that satisfies the following condition:
    \begin{quote}
        For any closed point $x \in X$, there exists a closed point $y \in Y$ and an integer $n_x$ such that $\Phi(\cO_x) \cong \cO_y[n_x]$.
    \end{quote}
    Then there exists a morphism $f \colon X \to Y$, a line bundle $\cL \in \Pic X$, and an integer $n$ such that $\Phi \cong f_*(- \otimes \cL)[n]$.
\end{lemma}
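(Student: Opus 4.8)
The plan is to show that the kernel $\cP$ is, up to shift, the push-forward of a line bundle along the graph of a morphism $f\colon X\to Y$; the only input beyond routine Fourier--Mukai bookkeeping is the finiteness criterion of Lemma \ref{lem:quasi-finite-flat-with-constant-degree-is-finite}. Write $\mathrm{pr}_X,\mathrm{pr}_Y$ for the two projections from $X\times_k Y$ and $i_x\colon\{x\}\times Y\hookrightarrow X\times_k Y$ for the inclusion of a slice. Since $\mathrm{pr}_X^*\cO_x=\cO_{\{x\}\times Y}$ and $\mathrm{pr}_Y$ restricts to an isomorphism $\{x\}\times Y\xrightarrow{\sim}Y$, one has $\Phi(\cO_x)\cong Li_x^*\cP$, the derived restriction of the kernel. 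First I would pin down the integer $n_x$: writing $a$ for the top degree with $\cH^a(\cP)\neq 0$, right-exactness of derived pull-back gives $\cH^a(Li_x^*\cP)\cong i_x^*\cH^a(\cP)$, so that $-n_x=a$ precisely where $\cH^a(\cP)$ does not vanish along $\{x\}\times Y$; a standard semicontinuity argument, using that each $Li_x^*\cP$ is concentrated in the single degree $-n_x$, shows $x\mapsto n_x$ is locally constant, hence constant by connectedness of $X$. After shifting $\cP$ I may assume $n_x=n=0$, so that $\Phi(\cO_x)\cong\cO_{f(x)}$ for a set-theoretic map $f\colon X(k)\to Y(k)$.

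Next I would upgrade $\cP$ to a sheaf flat over $X$. The top-degree computation already forces $\cH^i(\cP)=0$ for $i>0$, and a base-change/Tor analysis as in the proof of \cite[Lemma 3.11]{arai2024halfspherical}---the point at which reducedness of $X$ enters, via detection of vanishing on fibres---kills the negative cohomology sheaves and shows that the resulting sheaf $\cF\cong\cP$ is flat over $X$. By construction $\cF\otimes_{\cO_X}\kappa(x)\cong\cO_{f(x)}$ has length one for every closed point $x$; in particular its fibre at the closed point $(x,f(x))$ is one-dimensional, so Nakayama's lemma shows $\cF$ is locally cyclic over $\cO_{X\times Y}$. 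Hence $\cF$ is a line bundle on its scheme-theoretic support $Z=\Supp\cF$, i.e. $\cF\cong i_{Z*}\cL_0$ with $i_Z\colon Z\hookrightarrow X\times_k Y$ and $\cL_0\in\Pic Z$; moreover $\cO_Z$ is flat over $X$ and $\cO_{Z_x}=\cO_Z\otimes_{\cO_X}\kappa(x)$ has length one for all $x$.

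Finally I would identify $Z$ with the graph of a morphism. The projection $q=\mathrm{pr}_X|_Z\colon Z\to X$ is of finite type with singleton fibres, hence quasi-finite; flatness of $\cO_Z$ over $X$ together with the constant fibre degree $\deg(Z_x)=1$ places us exactly in the setting of Lemma \ref{lem:quasi-finite-flat-with-constant-degree-is-finite}, so $q$ is finite, and a finite flat morphism of degree one is an isomorphism. Thus $q\colon Z\xrightarrow{\sim}X$ exhibits $Z$ as the graph of the morphism $f=\mathrm{pr}_Y\circ q^{-1}\colon X\to Y$ refining the set map above; transporting $\cL_0$ to $\cL\in\Pic X$ and unwinding the integral transform yields $\Phi_{\cF}(-)\cong f_*(-\otimes\cL)$, whence $\Phi\cong f_*(-\otimes\cL)[n]$ after restoring the shift. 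The main obstacle is exactly this last finiteness step: because $X$ is only quasi-projective, the support $Z$ need not be proper over $X$, so the classical argument available when $X$ is projective does not apply; Lemma \ref{lem:quasi-finite-flat-with-constant-degree-is-finite} is the device that recovers finiteness from flatness and constancy of the fibre degree, and this---together with the reducedness used in the sheaf step---is where the hypotheses on $X$ are essential.
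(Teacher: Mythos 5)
Your argument is correct and hinges on exactly the same point as the paper's proof: the projection from the support of the kernel to $X$ is quasi-finite, flat, and of constant fibre degree one, so Lemma \ref{lem:quasi-finite-flat-with-constant-degree-is-finite} makes it finite, hence proper. The paper then simply cites \cite[Lemma 3.11]{arai2024halfspherical} for everything else, whereas you reprove that cited lemma inline (shift normalisation, flattening of the kernel, local cyclicity, identification of the support with a graph); this is a faithful unpacking of the same route rather than a different one.
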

\begin{proof}
    To apply \cite[Lemma 3.11]{arai2024halfspherical}, we must show that $\cP$ has a proper support over $X$.
    Let $\Sigma = \Supp(\cP) \subset X \times_k Y$ be the support of $\cP$ and $p \colon \Sigma \to X$ be the projection.
    The assumption on $\Phi$ implies that $p$ is flat and induces a bijection on closed points, and hence it is quasi-finite with constant degree $1$.
    Then Lemma \ref{lem:quasi-finite-flat-with-constant-degree-is-finite} implies that $p$ is finite, and therefore proper.
\end{proof}

\printbibliography

\end{document}